\renewenvironment{proof}{\paragraph{Proof:}}{\hfill$\square$}
\begin{document}

\title{Geometric Programming Problems with Triangular and Trapezoidal Two-fold Uncertainty Distributions 
}

\titlerunning{GP problems with triangular and trapezoidal two-fold UDs}        

\author{Tapas Mondal         \and
        Akshay Kumar Ojha  \and Sabyasachi Pani
}


\author{Tapas Mondal* \and Akshay Kumar Ojha \and Sabyasachi Pani }
\institute{Tapas Mondal \at
	School of Basic Sciences, Indian Institute of Technology Bhubaneswar \\
	Tel.: +918345939869\\
	\email{tm19@iitbbs.ac.in}           
	\and
	Akshay Kumar Ojha \at
	School of Basic Sciences, Indian Institute of Technology Bhubaneswar \\
	\email{akojha@iitbbs.ac.in}
	\and
	Sabyasachi Pani \at
	School of Basic Sciences, Indian Institute of Technology Bhubaneswar \\
	\email{spani@iitbbs.ac.in}
}

\date{Received: date / Accepted: date}

\maketitle

\begin{abstract}
Geometric programming (GP) is a well-known optimization tool for dealing with a wide range of nonlinear optimization and engineering problems. In general, it is assumed that the parameters of a GP problem are deterministic and accurate. However, in the real-world GP problem, the parameters are frequently inaccurate and ambiguous. This paper investigates the GP problem in an uncertain environment, with the coefficients as triangular and trapezoidal two-fold uncertain variables. In this paper, we introduce uncertain measures in a generalized version and focus on more complicated two-fold uncertainties to propose triangular and trapezoidal two-fold uncertain variables within the context of uncertainty theory. We develop three reduction methods to convert triangular and trapezoidal two-fold uncertain variables into single-fold uncertain variables using optimistic, pessimistic, and expected value criteria. Reduction methods are used to convert the GP problem with two-fold uncertainty into the GP problem with single-fold uncertainty. Furthermore, the chance-constrained uncertain-based framework is used to solve the reduced single-fold uncertain GP problem. Finally, a numerical example is provided to demonstrate the effectiveness of the procedures.\\
\keywords{Geometric programming \and Triangular two-fold uncertain variable \and Trapezoidal two-fold uncertain variable \and Reduction method }
\subclass{90C30 \and 90C46 \and 90C47 \and 49K45}
\end{abstract}
\section{Introduction}\label{sec.1}
Geometric programming (GP) is one of the most effective methods for solving nonlinear optimization problems. In 1967, Duffin et al. \cite{Duffin 1967} first introduced the fundamental theories of the GP problem. Generally, the GP technique is used for posynomial types of objective and constraint functions. The GP problem with all positive parameters except the exponents is called the posynomial problem. The GP technique converts the primal problem into a dual problem, which is easier to solve.
\par Nowadays, the GP problem is used in engineering design problems like circuit design \cite{Chu 2001,Hershenson 2001}, inventory modeling \cite{Mandal 2006,Roy 1997,Worrall 1982}, production planning \cite{Cheng 1991,Choi 1996,Islam 2007,Jung 2001,Kim 1998,Lee 1993}, risk management \cite{Scott 1995}, chemical processing \cite{Passy 1968,Ruckaert 1978,Wall 1986}, information theory \cite{Chiang 2005}, and structural design \cite{Gupta 1986}.
\par The traditional GP problem assumes that the parameters are precisely known and deterministic. Several researchers developed efficient and practical algorithms for the conventional GP problem with specific and precise parameters \cite{Avriel 1975,Beightler 1976,Duffin 1967,Duffin 1973,Fang 1988,Kortanek 1992,Kortanek 1997,Maranas 1997,Rajgopal 1990,Rajgopal 1992,Rajgopal 2002}. In practice, however, the values of the parameters of a GP problem may be uncertain and imprecise. As a result, numerous methods to the problem of uncertain GP have been developed. Avriel and Wilde \cite{Avriel 1969} introduced the stochastic GP problem. The authors considered the GP problem, in which the coefficients of objective and constraint functions are nonnegative random variables. Liu \cite{Liu 2006} devised a method to find the range of the posynomial objective values for the GP problem with coefficients and exponents that have interval values.  Mahapatra and Mandal \cite{Mahapatra 2012} solved the GP problem with interval-valued coefficients in a parametric form. 
\par In the last few decades, the GP problem has been developed in fuzzy environments. In 1993, Cao \cite{Cao 1993} extended the GP problem in an imprecise environment, along with interval and fuzzy coefficients. Later on, the same author \cite{Cao 1997} proposed the GP problem with T-fuzzy coefficients. Mandal and Roy \cite{Mandal 2006} solved the GP problem with L-R fuzzy coefficients. Yang and Cao \cite{Yang 2007} made significant contributions to the fuzzy relational GP under monomial.  Liu \cite{Liu 2007} developed the GP problem with fuzzy parameters. Shiraz et al. \cite{Shiraz 2017} used possibility, necessity, and credibility approaches to solve the fuzzy chance-constrained GP problem. Under the rough set theory, Shiraz and Fukuyama \cite{Shiraz 2018} developed the GP problem.
\par Recently, Liu \cite{Liu 2015} pioneered uncertainty theory, a new and developing field of mathematics. In uncertainty theory, a measurable function from uncertainty space to the set of real numbers is known as an uncertain variable (UV). It is characterized in mathematics by its corresponding uncertainty distribution (UD). UVs can be normal, linear, zigzag, log-normal, triangular, or trapezoidal. Using uncertainty theory, many decision-making optimization problems can be solved efficiently and effectively. To solve the GP problem, several researchers developed an uncertainty-based framework based on uncertainty theory. Shiraz et al. \cite{Shiraz 2016} first considered the GP problem in an uncertain environment. Based on uncertainty theory, the authors developed the deterministic form of an uncertain GP problem under normal, linear, and zigzag uncertainties. In 2022, Mondal et al. \cite{Mondal 2022} developed a procedure for solving the GP problem with uncertainty. The authors derived the equivalent deterministic form of the GP problem under triangular and trapezoidal uncertainty.
\par It is noted that the GP problem is only taken into account when there is a single-fold uncertainty. In practice, however, it is observed that uncertainty frequently occurs in multiple aspects of real-world decision-making systems due to a lack of prior information. Thus, two-fold uncertainty is developed in order to describe multi-fold uncertainty. In multi-fold uncertainty, the main aim is to reduce multi-fold UVs into single-fold UVs. In 2015, Yang et al. \cite{Yang 2015} developed the reduction methods of normal, linear, zigzag, and log-normal two-fold UVs. The authors applied those methods in a solid transportation problem.
\par According to the literature review, there has been a lot of research on the crisp and fuzzy GP problem. A GP problem with single-fold uncertainties has also recently emerged. To the best of our knowledge, no previous work on the GP problem with triangular and trapezoidal two-fold uncertainty has been done. As a result, we attempt to investigate the GP problem with triangular and trapezoidal two-fold uncertainty. The following are the main contributions we made to this study.
\begin{description}
	\item[I.] We propose triangular and trapezoidal two-fold UVs within the framework of uncertainty theory by introducing uncertain measures in a generalized version and focusing on more complicated two-fold uncertainties. Moreover, we consider the GP problem in an uncertain environment, with the coefficients as triangular and trapezoidal two-fold UVs. 
	\item[II.]  We develop three reduction methods to reduce triangular and trapezoidal two-fold UVs into single-fold UVs with the help of optimistic, pessimistic, and expected value criteria. 
	\item[III.] Reduction methods are used to convert the GP problem with two-fold uncertainty into the GP problem with single-fold uncertainty. 
	\item[IV.] The chance-constrained uncertain-based framework is used to solve the reduced single-fold uncertain GP problem. 
	\item[V.]  A numerical example is demonstrated to show the efficiency of the procedures.
\end{description}
\par The paper is organized as follows. Triangular and trapezoidal two-fold UVs are proposed in Section \ref{sec.2}. Reduction methods are developed in Section \ref{sec.3}. The GP problem with triangular and trapezoidal two-fold uncertain coefficients are considered and developed the solving procedures in Section \ref{sec.4}. In Section \ref{sec.5}, a numerical example is given. Finally, a conclusion on this work is incorporated in Section \ref{sec.6}.
\section{Triangular and Trapezoidal Two-fold UVs}\label{sec.2} 
In this section, we propose triangular and trapezoidal two-fold UVs. To do so, first, we discuss some preliminary concepts of single-fold uncertainty.
\subsection{Single-fold uncertainty} Uncertainty theory is a new branch of mathematics that uses the concept of UVs to describe the characteristics of single-fold uncertainties in order to model degrees of belief. To define UVs, we must first understand the ideas of uncertain measure and uncertainty space.
\begin{definition}\cite{Liu 2015}\label{def.1}
Let $L$ be the $\sigma-$algebra over the nonempty set $\Gamma$. In order to define UV, we need to first define the uncertain measure and uncertainty space. The uncertain measure is the set function $\mathcal{M}:L\rightarrow[0,1],$ which has the following axioms.
\begin{description}
	\item[I.] (Normality Axiom) $\mathcal{M}\{\Gamma\}=1$.
	\item[II.] (Duality Axiom) $\mathcal{M}\{A\}+\mathcal{M}\{A^c\}=1$ for any event $A\in L$.
	\item[III.] (Subadditivity Axiom) $\mathcal{M}\big\{ \bigcup\limits_{i=1}^{\infty}A_i \big\} \leq \sum\limits_{i=1}^{\infty}\mathcal{M}\{A_i\}$ for every countable sequence of events $\{A_i\}_{i=1}^{\infty}$.
\end{description}
\par Moreover, the triplet $(\Gamma,L,\mathcal{M})$ is known as the uncertainty space.
\end{definition}
\begin{remark}\cite{Liu 2015}
	Uncertainty is defined as one's personal level of belief that an event will occur. It is determined by personal knowledge of the event. If the state of knowledge changes, the uncertain measure will change.
	\end{remark}
\par The concept of UV is extremely useful for describing uncertainties. Here is the definition of UV.
\begin{definition}\cite{Liu 2015}\label{def.2}
	 A measurable function $\xi:(\Gamma,L,\mathcal{M})\to \mathbb{R}$ is called a UV, where $(\Gamma,L,\mathcal{M})$ is the uncertainty space and $\mathbb{R}$ is the set of real numbers, i.e., for any Borel set $B$, $\{\xi\in B\}$ is an event.
\end{definition}
\par Alternatively, a UV can be clearly characterized by its corresponding UD, which is a carrier of uncertain information about the UV. In most cases, knowing the UD rather than the UV itself is sufficient. Here is the mathematical definition of UD.
\begin{definition}\cite{Liu 2015}\label{def.3}
 If $\Phi$ is the UD corresponding to a UV $\xi$, then $\Phi$ is defined as 
	\begin{equation*}
		\Phi(x)=\mathcal{M}\{\xi\le x\},\forall x\in \mathbb{R}.
	\end{equation*}
\end{definition}
\par Moreover, the necessary and sufficient condition for a UD function is stated as follows.
\begin{theorem}\cite{Liu 2015}\label{thm.1}
	A function $\Phi:\mathbb{R}\to[0,1]$ is a UD if and only if it is a monotonic increasing function except $\Phi(x)=0$ and $\Phi(x)=1$.
	\end{theorem}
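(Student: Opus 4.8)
The statement is an ``if and only if'', so the plan is to treat the two implications separately, with the forward (necessity) direction being routine and the converse (sufficiency) direction carrying the real content. For necessity I would assume that $\Phi$ is the UD of some UV $\xi$ on an uncertainty space $(\Gamma, L, \mathcal{M})$ and deduce that $\Phi$ is monotone increasing; for sufficiency I would start from an arbitrary monotone increasing $\Phi:\mathbb{R}\to[0,1]$ (not identically $0$ or $1$) and manufacture an uncertainty space together with a UV whose distribution is exactly $\Phi$.

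For the necessity direction, the first step is to extract a monotonicity property of the uncertain measure itself from the three axioms of Definition~\ref{def.1}, since $\mathcal{M}$ is not assumed monotone a priori. Given events $A\subseteq B$, one has $B^c\subseteq A^c$ and hence $\Gamma=B\cup A^c$; applying the normality and subadditivity axioms gives $1=\mathcal{M}\{\Gamma\}\le \mathcal{M}\{B\}+\mathcal{M}\{A^c\}$, and the duality axiom turns $\mathcal{M}\{A^c\}$ into $1-\mathcal{M}\{A\}$, so that $\mathcal{M}\{A\}\le \mathcal{M}\{B\}$. The second step is then immediate: for $x_1\le x_2$ the events are nested, $\{\xi\le x_1\}\subseteq\{\xi\le x_2\}$, whence $\Phi(x_1)=\mathcal{M}\{\xi\le x_1\}\le \mathcal{M}\{\xi\le x_2\}=\Phi(x_2)$, i.e. $\Phi$ is monotone increasing. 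The exclusion of the two constants $\Phi\equiv 0$ and $\Phi\equiv 1$ is recorded here because these degenerate functions correspond to a ``variable'' concentrated at $\pm\infty$ rather than to a genuine real-valued UV.

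For the sufficiency direction I would build a concrete witness. Take $\Gamma=\mathbb{R}$, let $L$ be the Borel $\sigma$-algebra, and let $\xi$ be the identity map, so that $\{\xi\le x\}=(-\infty,x]$ and the problem reduces to defining a set function $\mathcal{M}$ on $L$ with $\mathcal{M}\{(-\infty,x]\}=\Phi(x)$ for every $x$ while satisfying the normality, duality and subadditivity axioms. It is worth noting first that whenever $\Phi$ happens to be a bona fide cumulative distribution function, the associated Lebesgue--Stieltjes probability measure already qualifies, since every probability measure satisfies all three axioms; the substance of the theorem is that an \emph{arbitrary} monotone increasing $\Phi$ (possibly lacking right-continuity or the limits $0$ and $1$) can still be realised. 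For this I would use a construction of Peng--Iwamura type, defining $\mathcal{M}\{B\}$ by a case distinction governed by the position of $\sup_{x\in B}\Phi(x)$ and $\sup_{x\in B^c}\Phi(x)$ relative to $1/2$, arranged so that the ray values reproduce $\Phi$. Normality follows from evaluating the definition on $B=\Gamma$, and duality is built into the symmetric treatment of $B$ and $B^c$.

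The main obstacle is the verification of the subadditivity axiom for the constructed measure: unlike additivity for probability measures, it must be checked by a somewhat delicate case analysis over all the ways a countable union and its pieces can straddle the $1/2$ threshold, and it is precisely at this stage that excluding the constants $\Phi\equiv 0$ and $\Phi\equiv 1$ is needed to keep $\mathcal{M}$ well defined and consistent with normality. Once subadditivity is confirmed, $(\Gamma,L,\mathcal{M})$ is a legitimate uncertainty space, $\xi$ is a legitimate UV, and its distribution is $\Phi$ by construction, which closes the converse and completes the proof.
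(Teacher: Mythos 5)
First, note that the paper itself offers no proof of this statement: Theorem~\ref{thm.1} is imported verbatim from \cite{Liu 2015} (it is the Peng--Iwamura characterization of uncertainty distributions), so there is no in-paper argument to match yours against. Judged on its own terms, your necessity direction is complete and correct: deriving monotonicity of $\mathcal{M}$ from normality, subadditivity and duality via $\Gamma=B\cup A^c$, and then applying it to the nested events $\{\xi\le x_1\}\subseteq\{\xi\le x_2\}$, is exactly the standard argument and needs nothing more.

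The sufficiency direction, however, is a plan rather than a proof, and the part you defer is the entire content of the theorem. You never actually write down the set function $\mathcal{M}$; you only say it is ``defined by a case distinction governed by the position of $\sup_{x\in B}\Phi(x)$ and $\sup_{x\in B^c}\Phi(x)$ relative to $1/2$.'' This is not a harmless omission, because the naive formula one would guess from that description does not reproduce $\Phi$ on the rays: for $B=(-\infty,x]$ with $\Phi(x)>1/2$ one has $\sup_{y\in B}\Phi(y)\ge 1/2$ and $\sup_{y\in B^c}\Phi(y)\ge\Phi(x)\ge 1/2$ (by monotonicity), so such a formula would return the constant $1/2$ instead of $\Phi(x)$. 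Getting $\mathcal{M}\{(-\infty,x]\}=\Phi(x)$ for all $x$ while keeping duality and normality requires the precise Peng--Iwamura case split, which you would need to state explicitly and then check on rays. Likewise, you explicitly flag the verification of countable subadditivity as ``the main obstacle'' and then do not perform it; since that verification is where the hypothesis that $\Phi$ is not identically $0$ or $1$ actually gets used, omitting it leaves the converse unproved. To close the gap you should either carry out the construction and the subadditivity case analysis in full, or do what the paper does and cite the result from \cite{Liu 2015} without proof.
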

\par It is simple to describe uncertain information geometrically using the UD in a two-dimensional space. Several UVs, like normal, linear, zigzag, log-normal, triangular, and trapezoidal, are used in real-world decision-making systems. Linear UV has an important role in generalized uncertainty. Here is the definition of linear UV.
\begin{definition}\cite{Liu 2015}\label{def.4}
	Let $\Phi$ be the UD corresponding to a UV $\xi$. The UV $\xi$ is said to be linear if and only if $\Phi$ is defined as follows.\\
	\[\Phi(x)=\left\{
	\begin{array}{ll}
		0, & x\leq a;\\
		\frac{x-a}{b-a}, & a\leq x\leq b;\\
		1, & x \geq b;\\
	\end{array}
	\right.
	\]
	where $a,b \in \mathbb{R}, a<b.$
	If $\xi$ is a linear UV with the parameters $a$ and $b$, then we write it as $\xi\sim \mathcal{L}(a,b).$
\end{definition}

\par In this research, our main work on triangular and trapezoidal uncertainty. In 2022, Mondal et al. \cite{Mondal 2022} proposed triangular and trapezoidal single-fold UVs. Here are the definitions of those UVs.
\begin{definition}\cite{Mondal 2022}\label{def.5}
	Let $\Phi$ be the UD corresponding to a UV $\xi$. The UV $\xi$ is said to be triangular if and only if $\Phi$ is defined as follows.\\
	\[\Phi(x)=\left\{
	\begin{array}{ll}
		0, & x\leq a;\\
		\frac{(x-a)^2}{(b-a)(c-a)}, & a\leq x\leq b;\\
		1-\frac{(c-x)^2}{(c-a)(c-b)}, & b\leq x\leq c;\\
		1, & x \geq c;\\
	\end{array}
	\right.
	\]
	where $a,b,c \in \mathbb{R}, a<b<c.$
	If $\xi$ is a triangular UV with the parameters $a$, $b$, and $c$, then we write it as $\xi\sim \mathcal{TRI}(a,b,c).$
\end{definition} 
	\begin{definition}\cite{Mondal 2022}\label{def.6}
	Let $\Phi$ be the UD corresponding to a UV $\xi$. The UV $\xi$ is said to be trapezoidal if and only if $\Phi$ is defined as follows.\\
	\[\Phi(x)=\left\{
	\begin{array}{ll}
		0, & x\leq a;\\
		\frac{(x-a)^2}{(d+c-a-b)(b-a)}, & a\leq x\leq b;\\
		\frac{(2x-a-b)}{(d+c-a-b)}, & b\leq x\leq c;\\
		1-\frac{(d-x)^2 }{(d+c-a-b)(d-c)}, & c\leq x\leq d;\\
		1, & x \geq d;\\
	\end{array}
	\right.
	\]
	where $a,b,c,d \in \mathbb{R}, a<b<c<d.$
	If $\xi$ is a trapezoidal UV with the parameters $a$, $b$, $c$, and $d$, then we write it as $\xi\sim \mathcal{TRA}(a,b,c,d).$
\end{definition} 
\subsection{Two-fold uncertainty} In a real-world situation, there may be inconsistencies in various experts' information on an event. Due to this lack of consistency, it is hard to give a good estimate of how uncertain an event is. In this case, it makes sense to generalize the uncertain measure by using UV values in the interval $[0, 1]$ rather than constant values. The following discussion gives the concepts of generalized uncertain measure, generalized uncertainty space, and two-fold UVs. To begin, we must define a regular UV.
\begin{definition}\cite{Liu 2015}\label{def.7}
	Let $\Phi$ be the UD corresponding to a UV $\xi$.
	The UV $\xi$ is said to be regular if and only if $\Phi$ is continuous and strictly increasing with respect $x$ at which $0<\Phi(x)<1$, and $$\lim\limits_{x\to -\infty}\Phi(x)=0, \lim\limits_{x\to \infty}\Phi(x)=1.$$
	\par Alternatively, a UV $\xi$ is said to be regular if and only if the inverse $\Phi^{-1}(\alpha)$ of the UD $\Phi(x)$ corresponding to the UV $\xi$ exists and it is unique for each $\alpha \in (0,1)$.
\end{definition} 
Similar to Definition \ref{def.1}, we give the concept of generalized uncertain measure.
\begin{definition}\cite{Yang 2015}\label{def.8}
Let $L$ be the $\sigma-$algebra over the nonempty set $\Gamma$. To define two-fold UV, we need to first define the generalized uncertain measure and generalized uncertainty space. The generalized uncertain measure is the set function $\mathcal{\tilde{M}}:L\rightarrow[0,1],$ which has the following two axioms.
\begin{description}
	\item[ Axiom 1.] $\mathcal{\tilde{M}}\{\phi\}=0,\mathcal{\tilde{M}}\{\Gamma\}=1$.
	\item[Axiom 2.] $\mathcal{\tilde{M}}\{A\}+\mathcal{\tilde{M}}\{A^c\}\leq1$ for any event $A\in L$.
\end{description}
\par Moreover, the triplet $(\Gamma,L,\mathcal{\tilde{M}})$ is known as the generalized uncertainty space.
\end{definition} 
\begin{remark}\cite{Yang 2015}
	The set function $\mathcal{\tilde{M}}$ is said to be a generalized uncertain measure if $\mathcal{\tilde{M}}\{A\}$ is a regular UV for any measurable set $A\in L$.
	\end{remark}
\par To describe two-fold uncertainties, the concept of two-fold UV is very useful.
\begin{definition}\cite{Yang 2015}\label{def.9}
	A two-fold UV $\tilde{\xi}$ is a measurable function from the generalized uncertainty space $(\Gamma,L,{\mathcal{\tilde{M}}})$ to the set of real numbers $\mathbb{R}$ such that $\{\eta\in \Gamma|\tilde{\xi}(\eta)\in B\}\in L$ for any Borel set $B$.
\end{definition}
\par A two-fold UV can also be defined by its UD function. Because we use regular UVs to quantify the likelihood of uncertain events, the corresponding distribution function is referred to as the two-fold UD function in the discussion that follows.
\begin{definition}\cite{Yang 2015}\label{def.10}
	 The mapping $\tilde{\Phi}_{\tilde{\xi}}:\mathbb{R}\times S_x \to [0,1]$ is called a two-fold UD. If $\tilde{\Phi}_{\tilde{\xi}}$ is a two-fold UD corresponding to a two-fold UV $\tilde{\xi}$, then $\tilde{\Phi}_{\tilde{\xi}}$ is defined as $$\tilde{\Phi}_{\tilde{\xi}}(x,y)=\mathcal{\tilde{M}}\{\Phi_{\tilde{\xi}}(x)\leq y\}, (x,y)\in \mathbb{R}\times S_x,$$
	where $\Phi_{\tilde{\xi}}(x)=\mathcal{M}\{\eta\in \Gamma|\tilde{\xi}(\eta)\leq x\}$, $S_x\subset [0,1]$ is the support of $\Phi_{\tilde{\xi}}(x)$, i.e., $$S_x=\{y\in[0,1]\big|0<\tilde{\Phi}_{\tilde{\xi}}(x,y)<1\}.$$
\end{definition}
\par In this research, we propose triangular and trapezoidal two-fold UVs. The proposed definitions are given as follows.
\begin{definition}\label{def.11}
	Let  $\tilde{\xi}$ be a triangular two-fold UV, denoted by $\mathcal{TRI}(a,b,c;\theta_l,\theta_r)$. The two-fold UD $\tilde{\Phi}_{\tilde{\xi}}(x,y)$ of a triangular two-fold UV $\tilde{\xi}$ is defined as follows.
	\begin{description}
		\item[I.] If $x \leq a$, then $\tilde{\Phi}_{\tilde{\xi}}(x,y)=0$.
		\item[II.] If $a<x<b$, then
	\[\tilde{\Phi}_{\tilde{\xi}}(x,y)=\left\{
	\begin{array}{ll}
		0, & y\leq A;\\
		\frac{y-A}{B-A}, & A\leq y\leq B;\\
		1, & y \geq B;\\
	\end{array}
	\right.
	\]
	where $$A=\frac{(x-a)^2}{(b-a)(c-a)}-\theta_l\min\bigg\{\frac{(x-a)^2}{(b-a)(c-a)},\frac{b-a}{c-a}-\frac{(x-a)^2}{(b-a)(c-a)}\bigg\},$$  $$B=\frac{(x-a)^2}{(b-a)(c-a)}+\theta_r\min\bigg\{\frac{(x-a)^2}{(b-a)(c-a)},\frac{b-a}{c-a}-\frac{(x-a)^2}{(b-a)(c-a)}\bigg\}.$$
	\item[III.] If $x =b$, then $\tilde{\Phi}_{\tilde{\xi}}(x,y)=\frac{b-a}{c-a}$.
	\item[IV.] If $b<x<c$, then
	\[\tilde{\Phi}_{\tilde{\xi}}(x,y)=\left\{
	\begin{array}{ll}
		0, & y\leq C;\\
		\frac{y-C}{D-C}, & C\leq y\leq D;\\
		1, & y \geq D;\\
	\end{array}
	\right.
	\]
	where $$C=1-\frac{(c-x)^2}{(c-a)(c-b)}-\theta_l\min\bigg\{\frac{c-b}{c-a}-\frac{(c-x)^2}{(c-a)(c-b)}, \frac{(c-x)^2}{(c-a)(c-b)}\bigg\},$$  $$D=1-\frac{(c-x)^2}{(c-a)(c-b)}+\theta_r\min\bigg\{\frac{c-b}{c-a}-\frac{(c-x)^2}{(c-a)(c-b)}, \frac{(c-x)^2}{(c-a)(c-b)}\bigg\}.$$
	\item[V.] If $x \geq c$, then $\tilde{\Phi}_{\tilde{\xi}}(x,y)=1$.
\end{description}
\par In this definition, $\theta_l, \theta_r \in [0,1]$ are two parameters that characterize the degree of uncertainty of $\tilde{\xi}$ when $x$ is given.
\end{definition}
\begin{remark}
	Alternatively, the two-fold UD of a triangular two-fold UV $\tilde{\xi}$ can be presented as 
		\[\tilde{\Phi}_{\tilde{\xi}}(x,y)=\left\{
	\begin{array}{ll}
		0, & x\leq a;\\
		\mathcal{L}(A,B), & a< x< b;\\
		\frac{b-a}{c-a}, & x=b;\\
		\mathcal{L}(C,D), & b< x< c;\\
		1, & x\geq c;\\
	\end{array}
	\right.
	\]
	where $A,B,C,D$ are given in Definition \ref{def.11}.
	\end{remark}
\begin{definition}\label{def.12}
	Let  $\tilde{\xi}$ be a trapezoidal two-fold UV, denoted by $\mathcal{TRA}(a,b,c,d;\theta_l,\theta_r)$. The two-fold UD $\tilde{\Phi}_{\tilde{\xi}}(x,y)$ of a trapezoidal two-fold UV $\tilde{\xi}$ is defined as follows.
	\begin{description}
		\item[I.] If $x \leq a$, then $\tilde{\Phi}_{\tilde{\xi}}(x,y)=0$.
		\item[II.] If $a<x<b$, then
		\[\tilde{\Phi}_{\tilde{\xi}}(x,y)=\left\{
		\begin{array}{ll}
			0, & y\leq A;\\
			\frac{y-A}{B-A}, & A\leq y\leq B;\\
			1, & y \geq B;\\
		\end{array}
		\right.
		\]
		where $$A=\frac{(x-a)^2}{(d+c-a-b)(b-a)}-\theta_l\min\bigg\{\frac{(x-a)^2}{(d+c-a-b)(b-a)},\frac{b-a}{d+c-a-b}-\frac{(x-a)^2}{(d+c-a-b)(b-a)}\bigg\},$$  $$B=\frac{(x-a)^2}{(d+c-a-b)(b-a)}+\theta_r\min\bigg\{\frac{(x-a)^2}{(d+c-a-b)(b-a)},\frac{b-a}{d+c-a-b}-\frac{(x-a)^2}{(d+c-a-b)(b-a)}\bigg\}.$$
		\item[III.] If $x =b$, then $\tilde{\Phi}_{\tilde{\xi}}(x,y)=\frac{b-a}{d+c-a-b}$.
		\item[IV.] If $b<x<c$, then
		\[\tilde{\Phi}_{\tilde{\xi}}(x,y)=\left\{
		\begin{array}{ll}
			0, & y\leq C;\\
			\frac{y-C}{D-C}, & C\leq y\leq D;\\
			1, & y \geq D;\\
		\end{array}
		\right.
		\]
		where $$C=\frac{2x-a-b}{d+c-a-b}-\theta_l\min\bigg\{\frac{2x-a-b}{d+c-a-b}-\frac{b-a}{d+c-a-b},\frac{2c-a-b}{d+c-a-b}-\frac{2x-a-b}{d+c-a-b}\bigg\},$$  $$D=\frac{2x-a-b}{d+c-a-b}+\theta_r\min\bigg\{\frac{2x-a-b}{d+c-a-b}-\frac{b-a}{d+c-a-b},\frac{2c-a-b}{d+c-a-b}-\frac{2x-a-b}{d+c-a-b}\bigg\}.$$
		\item[V.] If $x= c$, then $\tilde{\Phi}_{\tilde{\xi}}(x,y)=\frac{2c-a-b}{d+c-a-b}$.
		\item[VI.] If $c<x<d$, then
		\[\tilde{\Phi}_{\tilde{\xi}}(x,y)=\left\{
		\begin{array}{ll}
			0, & y\leq E;\\
			\frac{y-E}{F-E}, & E\leq y\leq F;\\
			1, & y \geq F;\\
		\end{array}
		\right.
		\]
		where $$E=1-\frac{(d-x)^2}{(d+c-a-b)(d-c)}-\theta_l\min\bigg\{\frac{d-c}{d+c-a-b}-\frac{(d-x)^2}{(d+c-a-b)(d-c)},\frac{(d-x)^2}{(d+c-a-b)(d-c)}\bigg\},$$  $$F=1-\frac{(d-x)^2}{(d+c-a-b)(d-c)}+\theta_r\min\bigg\{\frac{d-c}{d+c-a-b}-\frac{(d-x)^2}{(d+c-a-b)(d-c)},\frac{(d-x)^2}{(d+c-a-b)(d-c)}\bigg\}.$$
		\item[VII.] If $x \geq d$, then $\tilde{\Phi}_{\tilde{\xi}}(x,y)=1$.
	\end{description}
	\par In this definition, $\theta_l, \theta_r \in [0,1]$ are two parameters that characterize the degree of uncertainty of $\tilde{\xi}$ when $x$ is given.
\end{definition}
\begin{remark}
	Similarly, the two-fold UD of a trapezoidal two-fold UV $\tilde{\xi}$ can be presented as 
	\[\tilde{\Phi}_{\tilde{\xi}}(x,y)=\left\{
	\begin{array}{ll}
		0, & x\leq a;\\
		\mathcal{L}(A,B), & a< x< b;\\
		\frac{b-a}{d+c-a-b}, & x=b;\\
		\mathcal{L}(C,D), & b< x< c;\\
		\frac{2c-a-b}{d+c-a-b}, & x=c;\\
		\mathcal{L}(E,F), & c< x< d;\\
		1, & x\geq d;\\
	\end{array}
	\right.
	\]
	where $A,B,C,D,E,F$ are given in Definition \ref{def.12}.
\end{remark}
\section{ Reduction Methods}\label{sec.3} 
	In this section, we show how to reduce two-fold UV to single-fold UV. To accomplish this, we employ three critical values criteria: optimistic, pessimistic, and expected value criteria. To that end, we will first introduce the concepts of critical values of single-fold UV. Furthermore, the critical values of triangular and trapezoidal single-fold UVs are estimated.
	\subsection{Critical values of single-fold UVs} Liu \cite{Liu 2015} proposed the concepts critical values for single-fold UVs as follows.
	\begin{definition}\cite{Liu 2015}\label{def.13}
		If $\xi$ is a UV, then the optimistic, pessimistic, and expected values of $\xi$ are defined as follows.
		\begin{description}
			\item[ I.]  $\xi_{\sup}(\alpha)=\sup\{r|\mathcal{M}\{\xi\geq r\}\geq \alpha\},\alpha\in (0,1).$
			\item[ II.]  $\xi_{\inf}(\alpha)=\inf\{r|\mathcal{M}\{\xi\leq r\}\geq \alpha\},\alpha\in (0,1).$
			\item[ III.]  $\xi_{exp}=\int\limits_{0}^{\infty}\mathcal{M}\{\xi\geq r\}dr-
			\int\limits_{-\infty}^{0}\mathcal{M}\{\xi\leq r\}dr.$
		\end{description}
	\end{definition} 
\par Alternatively, Definition \ref{def.13} can be represented in terms of UD. The following theorem, known as the measure inversion theorem, is extremely useful in this regard.
\begin{theorem}\cite{Liu 2015}\label{thm.2}
Let $\xi$ be a regular UV with UD $\Phi$. Then for any real number $r$, $\mathcal{M}\{\xi\leq r\}=\Phi(r)\text{ and } \mathcal{M}\{\xi\geq r\}=1-\Phi(r).$
\end{theorem}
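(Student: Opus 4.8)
The plan is to read both identities off the three axioms of the uncertain measure (Definition \ref{def.1}) together with the defining relation of the uncertainty distribution (Definition \ref{def.3}), and to use the regularity hypothesis only at the single delicate point where a strict inequality must be exchanged for a weak one. The first identity requires no argument at all: $\mathcal{M}\{\xi\le r\}=\Phi(r)$ is precisely Definition \ref{def.3} evaluated at $x=r$. Hence the entire content of the theorem lies in establishing $\mathcal{M}\{\xi\ge r\}=1-\Phi(r)$.

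For the second identity I would first observe that $\{\xi\ge r\}$ and $\{\xi<r\}$ are complementary events, so the Duality Axiom gives $\mathcal{M}\{\xi\ge r\}=1-\mathcal{M}\{\xi<r\}$; it therefore suffices to prove $\mathcal{M}\{\xi<r\}=\Phi(r)$, i.e. that weakening the strict inequality to $\xi\le r$ does not alter the measure. Before doing so I would record the monotonicity of $\mathcal{M}$, which is not among the listed axioms but follows from them: if $A\subseteq B$ then $\Gamma=A^c\cup B$, so Subadditivity and Duality yield $1\le \mathcal{M}\{A^c\}+\mathcal{M}\{B\}=1-\mathcal{M}\{A\}+\mathcal{M}\{B\}$, whence $\mathcal{M}\{A\}\le\mathcal{M}\{B\}$.

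The core step is then a squeeze. Since $\{\xi<r\}\subseteq\{\xi\le r\}$, monotonicity gives $\mathcal{M}\{\xi<r\}\le\Phi(r)$. Conversely, for each $s<r$ we have $\{\xi\le s\}\subseteq\{\xi<r\}$, so $\Phi(s)=\mathcal{M}\{\xi\le s\}\le\mathcal{M}\{\xi<r\}$; letting $s\uparrow r$ and invoking the continuity of $\Phi$ supplied by regularity (Definition \ref{def.7}) gives $\Phi(r)=\lim_{s\uparrow r}\Phi(s)\le\mathcal{M}\{\xi<r\}$. The two bounds force $\mathcal{M}\{\xi<r\}=\Phi(r)$, and substituting back produces $\mathcal{M}\{\xi\ge r\}=1-\Phi(r)$.

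The main obstacle — indeed the only nontrivial issue — is precisely this passage from $\{\xi<r\}$ to $\{\xi\le r\}$. An uncertain measure is merely subadditive and need not be continuous along a monotone sequence of events, so one cannot push the limit through $\mathcal{M}$ directly; the regularity assumption is exactly what rescues the argument, since continuity of $\Phi$ at $r$ (equivalently, the absence of an ``atom'' there) lets me transfer the limit to the distribution function, where it is elementary. I would take care to note that the squeeze remains valid at the boundary values where $\Phi(r)\in\{0,1\}$, because continuity of $\Phi$ there is guaranteed by the limit conditions $\lim_{x\to-\infty}\Phi(x)=0$ and $\lim_{x\to\infty}\Phi(x)=1$ in Definition \ref{def.7}, so the conclusion holds for every real number $r$.
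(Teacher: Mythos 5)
The paper does not prove this statement at all: Theorem \ref{thm.2} is quoted from Liu's book \cite{Liu 2015} as the measure inversion theorem, so there is no in-paper argument to compare against. Your proof is the standard one and is essentially correct: the first identity is Definition \ref{def.3} verbatim, duality reduces the second to showing $\mathcal{M}\{\xi<r\}=\Phi(r)$, monotonicity of $\mathcal{M}$ is correctly derived from duality plus subadditivity (using $\Gamma=A^{c}\cup B$), and the squeeze $\lim_{s\uparrow r}\Phi(s)\le\mathcal{M}\{\xi<r\}\le\Phi(r)$ together with continuity of $\Phi$ at $r$ closes the argument. You also correctly identify the one place where regularity is genuinely needed. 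The only blemish is your justification of continuity at points where $\Phi(r)\in\{0,1\}$: the limit conditions at $\pm\infty$ say nothing about continuity at a finite $r$, so they cannot "guarantee" it. The right appeal is to the regularity definition itself (Definition \ref{def.7}) --- in particular its equivalent formulation that $\Phi^{-1}(\alpha)$ exists for every $\alpha\in(0,1)$, which forces $\Phi$ to have no jump anywhere, including into the values $0$ and $1$. With that justification substituted, the proof is complete.
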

\par Based on Theorem \ref{thm.2}, Definition \ref{def.13} can be redefined as
\begin{description}
	\item[ I.]  $\xi_{\sup}(\alpha)=\sup\{r|1-\Phi(r)\geq \alpha\},\alpha\in (0,1).$
	\item[ II.]  $\xi_{\inf}(\alpha)=\inf\{r|\Phi(r)\geq \alpha\},\alpha\in (0,1).$
	\item[ III.]  $\xi_{exp}=\int\limits_{0}^{\infty}(1-\Phi(r))dr-
	\int\limits_{-\infty}^{0}\Phi(r)dr.$
\end{description}
\begin{remark}
	Integrating by parts, we have 
	$\int\limits_{0}^{\infty}(1-\Phi(r))dr-
	\int\limits_{-\infty}^{0}\Phi(r)dr=\int\limits_{-\infty}^{\infty}r{\Phi}'(r)dr.$ Therefore, the expected value of a UV $\xi$ can be determined by the formula given as $\xi_{exp}=\int\limits_{-\infty}^{\infty}r{\Phi}'(r)dr.$ 
	\end{remark}
\par Based on the above operations, the following useful theorem is developed.
\begin{theorem}\cite{Liu 2015}\label{thm.3}
	Let $\xi\sim \mathcal{L}(a,b)$ be a linear UV. The optimistic, pessimistic, and expected values of $\xi$ are as follows.
	\begin{description}
		\item[ I.]  $\xi_{\sup}(\alpha)=\alpha a+(1-\alpha)b,\alpha\in (0,1).$
		\item[ II.]  $\xi_{\inf}(\alpha)=(1-\alpha)a+\alpha b,\alpha\in (0,1).$
		\item[ III.]  $\xi_{exp}=\frac{a+b}{2}.$
	\end{description} 
\end{theorem}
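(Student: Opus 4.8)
The plan is to apply the reformulated critical-value expressions that follow from the measure inversion theorem (Theorem \ref{thm.2})---namely $\xi_{\sup}(\alpha)=\sup\{r\mid 1-\Phi(r)\geq\alpha\}$, $\xi_{\inf}(\alpha)=\inf\{r\mid\Phi(r)\geq\alpha\}$, and the derivative form $\xi_{exp}=\int_{-\infty}^{\infty}r\,\Phi'(r)\,dr$ recorded in the remark---together with the explicit linear UD of Definition \ref{def.4}. Each of the three quantities then reduces to an elementary computation on the single nontrivial (middle) branch of $\Phi$.

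For the pessimistic value (II), I would analyze the set $\{r\mid\Phi(r)\geq\alpha\}$. Since $\Phi$ vanishes on $(-\infty,a]$ and equals $1$ on $[b,\infty)$, for $\alpha\in(0,1)$ this set is exactly $[r_\alpha,\infty)$, where $r_\alpha$ solves $\frac{r-a}{b-a}=\alpha$; hence its infimum is $r_\alpha=(1-\alpha)a+\alpha b$. For the optimistic value (I), the condition $1-\Phi(r)\geq\alpha$ rewrites as $\Phi(r)\leq 1-\alpha$, so the relevant set is $(-\infty,s_\alpha]$ with $s_\alpha$ solving $\frac{r-a}{b-a}=1-\alpha$; taking the supremum gives $s_\alpha=\alpha a+(1-\alpha)b$. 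For the expected value (III), I would compute $\Phi'(r)$, which equals $\frac{1}{b-a}$ on $(a,b)$ and is zero elsewhere, and integrate to obtain $\int_a^b \frac{r}{b-a}\,dr=\frac{b^2-a^2}{2(b-a)}=\frac{a+b}{2}$.

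The main obstacle is not the algebra but the care needed to justify the supremum and infimum rigorously: because $\Phi$ is flat outside $[a,b]$, one must verify that the defining sets are genuine half-lines and that strict monotonicity of the middle branch guarantees a \emph{unique} solution $r_\alpha$ (respectively $s_\alpha$), so that the extremum is attained exactly there rather than on the flat portions. A secondary subtlety is that the derivative formula for $\xi_{exp}$ is obtained in the remark via integration by parts only after splitting the integral at $0$; for arbitrary signs of $a$ and $b$ I would either invoke that formula directly (legitimate, since $\Phi$ is piecewise differentiable with $\Phi'$ of compact support) or, to remain fully self-contained, evaluate the two original integrals $\int_0^\infty(1-\Phi(r))\,dr-\int_{-\infty}^0\Phi(r)\,dr$ by cases on the sign pattern of $a,b$ and confirm that each case collapses to $\frac{a+b}{2}$.
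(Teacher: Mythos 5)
Your proof is correct; the paper itself gives no proof of Theorem \ref{thm.3} (it is quoted from Liu's book), but your method --- solving $1-\Phi(r)\geq\alpha$ and $\Phi(r)\geq\alpha$ on the strictly increasing middle branch and computing $\xi_{exp}=\int_{-\infty}^{\infty}r\Phi'(r)\,dr$ --- is exactly the approach the paper uses to prove the analogous Theorem \ref{thm.4} for triangular UVs. The computations all check out, so nothing further is needed.
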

\par Similarly, we developed the critical values of triangular and trapezoidal UVs.
\begin{theorem}\label{thm.4}
	Let $\xi\sim \mathcal{TRI}(a,b,c)$ be a triangular UV. The optimistic, pessimistic, and expected values of $\xi$ are as follows.
	\begin{description}
		\item[ I.]  
		\[\xi_{\sup}(\alpha)=\left\{
		\begin{array}{ll}
			a+\sqrt{(1-\alpha)(b-a)(c-a)}, & 0<\alpha\leq \frac{b-a}{c-a};\\
			c-\sqrt{\alpha(c-a)(c-b)}, &  \frac{b-a}{c-a}\leq\alpha<1.\\
		\end{array}
		\right.
		\]
		\item[ II.]  \[\xi_{\inf}(\alpha)=\left\{
		\begin{array}{ll}
			a+\sqrt{\alpha(b-a)(c-a)}, & 0<\alpha\leq \frac{b-a}{c-a};\\
			c-\sqrt{(1-\alpha)(c-a)(c-b)}, &  \frac{b-a}{c-a}\leq\alpha<1.\\
		\end{array}
		\right.
		\]
		\item[ III.]  $$\xi_{exp}=\frac{a+b+c}{3}.$$
	\end{description} 
\end{theorem}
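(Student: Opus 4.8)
The plan is to apply the redefined critical-value formulas (the three bulleted expressions stated just after Theorem~\ref{thm.2}) directly to the triangular UD $\Phi$ from Definition~\ref{def.5}, handling each of the three criteria in turn. For the optimistic value $\xi_{\sup}(\alpha)=\sup\{r\mid 1-\Phi(r)\ge\alpha\}$, I would first observe that $\Phi$ is continuous and strictly increasing on $[a,c]$, so the condition $1-\Phi(r)\ge\alpha$ is equivalent to $\Phi(r)\le 1-\alpha$, and the supremum is attained at the unique $r$ solving $\Phi(r)=1-\alpha$. The key is that the value $1-\alpha$ lands in the lower piece $\Phi(r)=\frac{(r-a)^2}{(b-a)(c-a)}$ precisely when $1-\alpha\le\frac{b-a}{c-a}$, i.e.\ when $\alpha\ge\frac{c-b}{c-a}$, and otherwise in the upper piece. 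Solving each quadratic for $r$ gives the two branches; I expect the branch cutoff to be expressible as $\frac{b-a}{c-a}$ after reconciling with the form stated in the theorem.

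For the pessimistic value $\xi_{\inf}(\alpha)=\inf\{r\mid\Phi(r)\ge\alpha\}$, the same monotonicity makes the infimum the unique solution of $\Phi(r)=\alpha$. Here $\alpha$ falls in the lower piece exactly when $\alpha\le\frac{b-a}{c-a}$, matching the stated case split directly; solving $\frac{(r-a)^2}{(b-a)(c-a)}=\alpha$ yields $r=a+\sqrt{\alpha(b-a)(c-a)}$, and solving $1-\frac{(c-r)^2}{(c-a)(c-b)}=\alpha$ on the upper piece yields $r=c-\sqrt{(1-\alpha)(c-a)(c-b)}$. The two quadratic inversions are routine; the only care needed is to take the root lying in the correct subinterval $[a,b]$ or $[b,c]$ respectively, which the sign of the square root determines.

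For the expected value I would use the remark's formula $\xi_{exp}=\int_{-\infty}^{\infty} r\,\Phi'(r)\,dr$, splitting the integral over $[a,b]$ and $[b,c]$ where $\Phi'$ is a linear function of $r$. On $[a,b]$ we have $\Phi'(r)=\frac{2(r-a)}{(b-a)(c-a)}$ and on $[b,c]$ we have $\Phi'(r)=\frac{2(c-r)}{(c-a)(c-b)}$; each integrand $r\Phi'(r)$ is a quadratic, so both pieces integrate in closed form. I expect the two contributions to combine and simplify to $\frac{a+b+c}{3}$ after cancellation.

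The steps are individually elementary, so the main obstacle is bookkeeping rather than any conceptual difficulty: I must confirm that the breakpoint $\frac{b-a}{c-a}$ genuinely separates the two pieces in each of the optimistic and pessimistic cases (the optimistic split uses $1-\alpha$ against $\frac{b-a}{c-a}$, which must be shown equivalent to the theorem's stated interval in $\alpha$), and I must verify continuity of the resulting piecewise formulas at $\alpha=\frac{b-a}{c-a}$ so that the two branches agree at the junction. I would check that junction value explicitly as a consistency test. The expected-value computation carries the only real algebraic risk, namely that the quadratic antiderivatives simplify correctly, and I would verify it against the known centroid of a symmetric triangular distribution as a sanity check.
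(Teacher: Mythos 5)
Your overall strategy is exactly the paper's: invert the triangular UD $\Phi$ of Definition~\ref{def.5} using the redefined critical-value formulas after Theorem~\ref{thm.2} for parts I and II, and integrate $r\Phi'(r)$ piecewise for part III. Parts II and III go through exactly as you describe and match the paper's computation (the two quadratic contributions to the expected value are $\frac{2b^2-a^2-ab}{3(c-a)}$ and $\frac{c^2+bc-2b^2}{3(c-a)}$, summing to $\frac{a+b+c}{3}$).

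The step that will fail is the ``reconciliation'' you defer in part I. Your intermediate analysis is correct: $\xi_{\sup}(\alpha)=\Phi^{-1}(1-\alpha)$, and the solution lies in the lower piece precisely when $1-\alpha\le\frac{b-a}{c-a}$, i.e.\ $\alpha\ge\frac{c-b}{c-a}$. Hence the correct statement is $\xi_{\sup}(\alpha)=c-\sqrt{\alpha(c-a)(c-b)}$ for $0<\alpha\le\frac{c-b}{c-a}$ and $\xi_{\sup}(\alpha)=a+\sqrt{(1-\alpha)(b-a)(c-a)}$ for $\frac{c-b}{c-a}\le\alpha<1$ --- the two formulas of the theorem attached to the \emph{opposite} ranges, with cutoff $\frac{c-b}{c-a}$ rather than $\frac{b-a}{c-a}$. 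These agree with the printed statement only when $b-a=c-b$; your planned continuity check at the junction detects this, since at $\alpha=\frac{b-a}{c-a}$ the two printed branches differ by $(c-a)-2\sqrt{(b-a)(c-b)}$, which by AM--GM is strictly positive unless $b$ is the midpoint of $[a,c]$ (e.g.\ for $a=0$, $b=1$, $c=2$, $\alpha=1/4$ the printed first branch gives $\sqrt{1.5}\approx1.22$ while the true value is $2-\sqrt{0.5}\approx1.29$). The paper's own proof does not catch this because it substitutes the lower branch of $\Phi$ into $\sup\{r\mid1-\Phi(r)\ge\alpha\}$ for every $\alpha\le\frac{b-a}{c-a}$ without verifying that the resulting supremum $a+\sqrt{(1-\alpha)(b-a)(c-a)}$ actually lies in $[a,b]$; for $\alpha<\frac{c-b}{c-a}$ it does not. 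So your method is sound and will produce a corrected part I, but it cannot produce part I as stated; do not force the cutoff to $\frac{b-a}{c-a}$.
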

\begin{proof}
	We prove case (I) and case (III). The proof of case (II) is similar to the proof of case (I).\\
	For any $0<\alpha\leq \frac{b-a}{c-a},$ we have
	\begin{equation*} 
	\begin{aligned}
	\xi_{\sup}(\alpha)&=\sup\{r|1-\Phi(r)\geq \alpha\}\\
&=\sup\bigg\{r\bigg|1-\frac{(r-a)^2}{(b-a)(c-a)}\geq \alpha\bigg\}\\
&=\sup\bigg\{r\bigg|\frac{(r-a)^2}{(b-a)(c-a)}\leq 1-\alpha\bigg\}\\
&=\sup\{r|(r-a)^2\leq (1-\alpha)(b-a)(c-a)\}\\
&=\sup\bigg\{r|-\sqrt{(1-\alpha)(b-a)(c-a)}\leq (r-a)\leq \sqrt{(1-\alpha)(b-a)(c-a)}\bigg\}\\
&=\sup\bigg\{r|a-\sqrt{(1-\alpha)(b-a)(c-a)}\leq r\leq a+\sqrt{(1-\alpha)(b-a)(c-a)}\bigg\}\\
&=a+\sqrt{(1-\alpha)(b-a)(c-a)}.
\end{aligned}
\end{equation*}
For any $\frac{b-a}{c-a}\leq\alpha<1,$ we have
\begin{equation*} 
	\begin{aligned}
		\xi_{\sup}(\alpha)&=\sup\{r|1-\Phi(r)\geq \alpha\}\\
		&=\sup\bigg\{r\bigg|1-\bigg[1-\frac{(c-r)^2}{(c-a)(c-b)}\bigg]\geq \alpha\bigg\}\\
		&=\sup\bigg\{r\bigg|\frac{(c-r)^2}{(c-a)(c-b)}\geq \alpha\bigg\}\\
		&=\sup\{r|(c-r)^2\geq \alpha (c-a)(c-b)\}
	\end{aligned}
\end{equation*}
If $c-r\leq -\sqrt{\alpha(c-a)(c-b)},$ then $r\geq c+\sqrt{\alpha(c-a)(c-b)}.$ So, $\Phi(r)=1,$ which implies $1-\Phi(r)=0\geq \alpha.$ This contradicts $\alpha \in (0,1).$
Thus, we omit the case $c-r\leq -\sqrt{\alpha(c-a)(c-b)}.$  Therefore, we have 
\begin{equation*}
	\begin{aligned}
		\xi_{\sup}(\alpha)&=\sup\bigg\{r| c-r\geq \sqrt{\alpha(c-a)(c-b)}\bigg\}\\
		&=\sup\bigg\{r|r\leq c-\sqrt{\alpha(c-a)(c-b)}\bigg\}\\
		&=c-\sqrt{\alpha(c-a)(c-b)}.
	\end{aligned}
\end{equation*}
Hence the proof of case (I) is done.
\par Next, we proof the case (III). We have 
\begin{equation*} 
	\begin{aligned}
		\xi_{exp}&=\int\limits_{-\infty}^{\infty}r{\Phi}'(r)dr\\
		&=\int\limits_{a}^{b}r\cdot \frac{2(r-a)}{(b-a)(c-a)}dr+\int\limits_{b}^{c}r\cdot \frac{2(c-r)}{(c-a)(c-b)}dr\\
		&=\frac{2b^2-a^2-ab}{3(c-a)}+\frac{c^2+bc-2b^2}{3(c-a)}\\
		&=\frac{a+b+c}{3}.
	\end{aligned}
\end{equation*}
Hence the proof of case (III) is done.
\end{proof}
\begin{theorem}\label{thm.5}
	Let $\xi\sim \mathcal{TRA}(a,b,c,d)$ be a trapezoidal UV. The optimistic, pessimistic, and expected values of $\xi$ are as follows.
	\begin{description}
		\item[ I.]  
		\[\xi_{\sup}(\alpha)=\left\{
		\begin{array}{ll}
			a+\sqrt{(1-\alpha)(d+c-a-b)(b-a)}, & 0<\alpha\leq \frac{b-a}{d+c-a-b};\\
			\frac{(a+b)+(1-\alpha)(d+c-a-b)}{2}, &  \frac{b-a}{d+c-a-b}\leq\alpha\leq \frac{2c-a-b}{d+c-a-b} ;\\
			d-\sqrt{\alpha(d+c-a-b)(d-c)}, &  \frac{2c-a-b}{d+c-a-b}\leq\alpha<1.\\
		\end{array}
		\right.
		\]
		\item[ II.]  \[\xi_{\inf}(\alpha)=\left\{
		\begin{array}{ll}
			a+\sqrt{\alpha(d+c-a-b)(b-a)}, & 0<\alpha\leq \frac{b-a}{d+c-a-b};\\
			\frac{(a+b)+\alpha(d+c-a-b)}{2}, &  \frac{b-a}{d+c-a-b}\leq\alpha\leq \frac{2c-a-b}{d+c-a-b} ;\\
			d-\sqrt{(1-\alpha)(d+c-a-b)(d-c)}, &  \frac{2c-a-b}{d+c-a-b}\leq\alpha<1.\\
		\end{array}
		\right.
		\]
		\item[ III.]  $$\xi_{exp}=\frac{1}{3(d+c-a-b)}\bigg[\frac{d^3-c^3}{d-c}-\frac{b^3-a^3}{b-a}\bigg].$$
	\end{description} 
\end{theorem}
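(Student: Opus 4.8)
The plan is to follow exactly the template established in the proof of Theorem~\ref{thm.4}, adapting it to the three non-trivial pieces of the trapezoidal UD of Definition~\ref{def.6}. For the optimistic value I would start from the measure-inversion form $\xi_{\sup}(\alpha)=\sup\{r\,|\,1-\Phi(r)\geq\alpha\}$ and treat each branch of $\Phi$ separately, matching the breakpoints $\Phi(b)=\frac{b-a}{d+c-a-b}$ and $\Phi(c)=\frac{2c-a-b}{d+c-a-b}$ to the three intervals of $\alpha$ in the statement. On the first branch the defining inequality becomes $\frac{(r-a)^2}{(d+c-a-b)(b-a)}\leq 1-\alpha$; solving the quadratic and keeping the relevant root gives $a+\sqrt{(1-\alpha)(d+c-a-b)(b-a)}$. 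On the middle branch the inequality is linear, $\frac{2r-a-b}{d+c-a-b}\leq 1-\alpha$, so the supremum is $\frac{(a+b)+(1-\alpha)(d+c-a-b)}{2}$. On the last branch I would reduce $1-\Phi(r)\geq\alpha$ to $(d-r)^2\geq\alpha(d+c-a-b)(d-c)$; as in Theorem~\ref{thm.4}, one of the two sign choices forces $\Phi(r)=1$ and contradicts $\alpha\in(0,1)$, so only $d-r\geq\sqrt{\alpha(d+c-a-b)(d-c)}$ survives, yielding $d-\sqrt{\alpha(d+c-a-b)(d-c)}$.

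For the pessimistic value the argument is the mirror image, built on $\xi_{\inf}(\alpha)=\inf\{r\,|\,\Phi(r)\geq\alpha\}$: on the three branches I would solve $\Phi(r)\geq\alpha$ instead of $\Phi(r)\leq 1-\alpha$ and take the infimum, which selects the opposite root on the quadratic pieces and produces $a+\sqrt{\alpha(d+c-a-b)(b-a)}$, $\frac{(a+b)+\alpha(d+c-a-b)}{2}$, and $d-\sqrt{(1-\alpha)(d+c-a-b)(d-c)}$ on the first, middle, and last pieces respectively. I expect case~II to require no genuinely new idea once case~I is in place, exactly as the authors note for the triangular theorem.

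For the expected value I would invoke the remark-form identity $\xi_{exp}=\int_{-\infty}^{\infty}r\,\Phi'(r)\,dr$, so that only the three active branches contribute. Differentiating the UD gives the densities $\frac{2(r-a)}{(d+c-a-b)(b-a)}$, $\frac{2}{d+c-a-b}$, and $\frac{2(d-r)}{(d+c-a-b)(d-c)}$ on $[a,b]$, $[b,c]$, and $[c,d]$, so $\xi_{exp}$ splits as a sum of three elementary integrals over these subintervals. Using the factorizations $2b^3-3ab^2+a^3=(b-a)^2(2b+a)$ and $d^3-3dc^2+2c^3=(d-c)^2(d+2c)$, the two outer integrals reduce to $\frac{2b^2-ab-a^2}{3(d+c-a-b)}$ and $\frac{d^2+cd-2c^2}{3(d+c-a-b)}$, while the middle one equals $\frac{c^2-b^2}{d+c-a-b}$. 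Adding the three, the $b^2$ and $c^2$ contributions partially cancel and leave $\frac{(c^2+cd+d^2)-(a^2+ab+b^2)}{3(d+c-a-b)}$, which is precisely $\frac{1}{3(d+c-a-b)}\big[\frac{d^3-c^3}{d-c}-\frac{b^3-a^3}{b-a}\big]$ once the bracketed quotients are recognized as $d^2+cd+c^2$ and $b^2+ab+a^2$.

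The obstacle I anticipate is bookkeeping rather than conceptual: getting the three cubic integrals to telescope into the stated compact quotient is where most care is needed, and on the outer quadratic branches one must both justify discarding the spurious root (the step that in Theorem~\ref{thm.4} removed $c-r\leq-\sqrt{\alpha(c-a)(c-b)}$) and verify that the retained root lands in the correct subinterval, so that the chosen branch of $\Phi$ is indeed the operative one over the claimed range of $\alpha$.
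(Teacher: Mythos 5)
Your plan coincides with the paper's: the published proof of Theorem~\ref{thm.5} is the single line ``similar to the proof of Theorem~\ref{thm.4},'' and you are simply executing that branch-by-branch reduction, with your expected-value computation (the factorizations $2b^3-3ab^2+a^3=(b-a)^2(2b+a)$ and $d^3-3dc^2+2c^3=(d-c)^2(d+2c)$ and the telescoping to $\frac{(c^2+cd+d^2)-(a^2+ab+b^2)}{3(d+c-a-b)}$) checking out exactly. The one delicate point is the verification you defer to the end: since $\xi_{\sup}(\alpha)=\sup\{r\,|\,\Phi(r)\leq 1-\alpha\}$, the operative branch of $\Phi$ is determined by where $1-\alpha$ (not $\alpha$) falls relative to $\Phi(b)=\frac{b-a}{d+c-a-b}$ and $\Phi(c)=\frac{2c-a-b}{d+c-a-b}$, so confirming that each retained root actually lands in its claimed subinterval over the stated $\alpha$-range is nontrivial unless the distribution is symmetric --- but this is precisely the step the paper's own template for Theorem~\ref{thm.4} also leaves unexamined, so your proposal is faithful to the paper's argument.
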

\begin{proof}
	The proof is similar to the proof of Theorem \ref{thm.4}.
\end{proof}
\subsection{Reduction methods of two-fold UVs with critical values criteria}
Using optimistic, pessimistic, and expected value criteria, we develop three different reduction methods to convert triangular two-fold UV into single-fold UV. The following theorems are developed in this regard.
\begin{theorem}\label{thm.6}
	Let $\tilde{\xi}\sim \mathcal{TRI}(a,b,c;\theta_l,\theta_r)$ be a triangular two-fold UV. The reduced single-fold UD via $\alpha$ optimistic value criteria for any $\alpha \in(0,1)$ is as follows.
	\begin{description}
		\item[ I.] If $x\leq a$, then  $\Phi_{\tilde{\xi}_{\sup}}(x;\alpha)=0.$  
		\item[ II.] If $a< x< b$, then
		\[\Phi_{\tilde{\xi}_{\sup}}(x;\alpha)=\left\{
		\begin{array}{ll}
			\frac{(x-a)^2}{(b-a)(c-a)}-[\alpha\theta_l-(1-\alpha)\theta_r]\frac{(x-a)^2}{(b-a)(c-a)}, & a<x\leq a+\frac{b-a}{\sqrt{2}};\\
			\frac{(x-a)^2}{(b-a)(c-a)}-[\alpha\theta_l-(1-\alpha)\theta_r]\big[\frac{b-a}{c-a}-\frac{(x-a)^2}{(b-a)(c-a)}\big], & a+\frac{b-a}{\sqrt{2}}\leq x<b.\\
		\end{array}
		\right.
		\]
		\item[ III.] If $x=b$, then $\Phi_{\tilde{\xi}_{\sup}}(x;\alpha)=\frac{b-a}{c-a}.$ 
		\item[ IV.] If $b< x< c$, then
		\[\Phi_{\tilde{\xi}_{\sup}}(x;\alpha)=\left\{
		\begin{array}{ll}
			1-\frac{(c-x)^2}{(c-a)(c-b)}-[\alpha\theta_l-(1-\alpha)\theta_r]\big[\frac{c-b}{c-a}-\frac{(c-x)^2}{(c-a)(c-b)}\big], & b<x\leq c-\frac{c-b}{\sqrt{2}};\\
			1-\frac{(c-x)^2}{(c-a)(c-b)}-[\alpha\theta_l-(1-\alpha)\theta_r]\frac{(c-x)^2}{(c-a)(c-b)}, &  c-\frac{c-b}{\sqrt{2}}\leq x<c.\\
		\end{array}
		\right.
		\] 
		\item[ V.] If $x\geq c$, then $\Phi_{\tilde{\xi}_{\sup}}(x;\alpha)=1.$  
	\end{description}
\end{theorem}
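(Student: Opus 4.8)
The plan is to exploit the structural fact, recorded in the remark following Definition~\ref{def.11}, that for each fixed $x$ the two-fold distribution value $\Phi_{\tilde{\xi}}(x)$ is itself a \emph{linear} uncertain variable: on $a<x<b$ it equals $\mathcal{L}(A,B)$ and on $b<x<c$ it equals $\mathcal{L}(C,D)$, while on the three remaining regions ($x\le a$, $x=b$, $x\ge c$) it collapses to the constants $0$, $\frac{b-a}{c-a}$, and $1$ respectively. Reducing via the $\alpha$-optimistic criterion then means nothing more than replacing, for each $x$, this linear UV by its $\alpha$-optimistic value. Cases I, III, and V need no computation, since the optimistic value of a degenerate UV is the constant itself; this disposes of the boundary regions immediately.

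For the two nontrivial regions I would invoke Theorem~\ref{thm.3}, which gives $\xi_{\sup}(\alpha)=\alpha a+(1-\alpha)b$ for $\xi\sim\mathcal{L}(a,b)$. Applying this to $\mathcal{L}(A,B)$ on $a<x<b$ yields
\begin{equation*}
\Phi_{\tilde{\xi}_{\sup}}(x;\alpha)=\alpha A+(1-\alpha)B.
\end{equation*}
Abbreviating $P=\frac{(x-a)^2}{(b-a)(c-a)}$ and $M=\min\{P,\frac{b-a}{c-a}-P\}$, Definition~\ref{def.11} gives $A=P-\theta_l M$ and $B=P+\theta_r M$, so a one-line collection of terms produces $\alpha A+(1-\alpha)B=P-[\alpha\theta_l-(1-\alpha)\theta_r]M$. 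The identical step on $b<x<c$, with $R=\frac{(c-x)^2}{(c-a)(c-b)}$ and $N=\min\{\frac{c-b}{c-a}-R,\,R\}$, gives $(1-R)-[\alpha\theta_l-(1-\alpha)\theta_r]N$.

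The only genuine work is resolving the two $\min$ terms, and this is exactly where the $\sqrt{2}$ breakpoints arise. On $a<x<b$ one has $M=P$ precisely when $P\le\frac{b-a}{c-a}-P$, i.e.\ $2(x-a)^2\le(b-a)^2$, which rearranges to $x\le a+\frac{b-a}{\sqrt{2}}$; otherwise $M=\frac{b-a}{c-a}-P$. Substituting the two branches into $P-[\alpha\theta_l-(1-\alpha)\theta_r]M$ reproduces the two lines of case II verbatim. A parallel threshold analysis on $b<x<c$ shows $N=R$ iff $2(c-x)^2\le(c-b)^2$, i.e.\ $x\ge c-\frac{c-b}{\sqrt{2}}$, and substituting into $(1-R)-[\alpha\theta_l-(1-\alpha)\theta_r]N$ yields the two lines of case IV. I expect the main (though still routine) obstacle to be the bookkeeping of inequality directions when the minimum switches arguments: the first branch of case II is paired with $M=P$, whereas the first branch of case IV is paired with $N=\frac{c-b}{c-a}-R$. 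This reversal is forced by the fact that $P$ is increasing in $x$ while $R$ is decreasing in $x$, so the ``small'' argument of the minimum sits at opposite ends of the two intervals, and one must track this carefully to assign each algebraic branch to the correct $x$-subinterval.
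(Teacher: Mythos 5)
Your proposal is correct and follows essentially the same route as the paper's own proof: identify $\tilde{\Phi}_{\tilde{\xi}}(x,\cdot)$ as a constant or a linear UV $\mathcal{L}(A,B)$ (resp. $\mathcal{L}(C,D)$) region by region, apply Theorem~\ref{thm.3} to get $\alpha A+(1-\alpha)B$, and resolve the $\min$ to obtain the $\sqrt{2}$ breakpoints. Your explicit tracking of which argument of the minimum is active on each subinterval (and the reversal between cases II and IV) is exactly the computation the paper carries out implicitly when it splits into the two branches.
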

\begin{proof}
	Let $\alpha\in (0,1)$. 
	
	\par If $x\leq a,$ then we have $\tilde{\Phi}_{\tilde{\xi}}(x,y)=0.$ Therefore, $\Phi_{\tilde{\xi}_{\sup}}(x;\alpha)=0,\forall x\leq a.$
	\par If $a< x< b$, then we have $\tilde{\Phi}_{\tilde{\xi}}(x,y)=\mathcal{L}(A,B),$ where $A,B$ are given in Definition \ref{def.11}. So, based on Theorem \ref{thm.3}, we have 
	\begin{equation*} 
		\begin{aligned}
			\Phi_{\tilde{\xi}_{\sup}}(x;\alpha)&=\alpha A+(1-\alpha)B\\
			&=\alpha \bigg[\frac{(x-a)^2}{(b-a)(c-a)}-\theta_l\min\bigg\{\frac{(x-a)^2}{(b-a)(c-a)},\frac{b-a}{c-a}-\frac{(x-a)^2}{(b-a)(c-a)}\bigg\}\bigg]\\
			&\qquad+(1-\alpha)\bigg[\frac{(x-a)^2}{(b-a)(c-a)}+\theta_r\min\bigg\{\frac{(x-a)^2}{(b-a)(c-a)},\frac{b-a}{c-a}-\frac{(x-a)^2}{(b-a)(c-a)}\bigg\}\bigg]\\
			& =\left\{
			\begin{array}{ll}
				\frac{(x-a)^2}{(b-a)(c-a)}-[\alpha\theta_l-(1-\alpha)\theta_r]\frac{(x-a)^2}{(b-a)(c-a)}, & a<x\leq a+\frac{b-a}{\sqrt{2}};\\
				\frac{(x-a)^2}{(b-a)(c-a)}-[\alpha\theta_l-(1-\alpha)\theta_r]\big[\frac{b-a}{c-a}-\frac{(x-a)^2}{(b-a)(c-a)}\big], & a+\frac{b-a}{\sqrt{2}}\leq x<b.\\
			\end{array}
			\right.
		\end{aligned}
	\end{equation*}
\par If $x=b,$ then we have $\tilde{\Phi}_{\tilde{\xi}}(x,y)=\frac{b-a}{c-a}.$ Therefore, $\Phi_{\tilde{\xi}_{\sup}}(x;\alpha)=\frac{b-a}{c-a}$ for $ x=b.$
\par If $b< x< c$, then we have $\tilde{\Phi}_{\tilde{\xi}}(x,y)=\mathcal{L}(C,D),$ where $C,D$ are given in Definition \ref{def.11}. So, based on Theorem \ref{thm.3}, we have 
\begin{equation*} 
	\begin{aligned}
		\Phi_{\tilde{\xi}_{\sup}}(x;\alpha)&=\alpha C+(1-\alpha)D\\
		&=\alpha \bigg[1-\frac{(c-x)^2}{(c-a)(c-b)}-\theta_l\min\bigg\{\frac{c-b}{c-a}-\frac{(c-x)^2}{(c-a)(c-b)}, \frac{(c-x)^2}{(c-a)(c-b)}\bigg\}\bigg]\\
		&\qquad+(1-\alpha)\bigg[1-\frac{(c-x)^2}{(c-a)(c-b)}+\theta_r\min\bigg\{\frac{c-b}{c-a}-\frac{(c-x)^2}{(c-a)(c-b)}, \frac{(c-x)^2}{(c-a)(c-b)}\bigg\}\bigg]\\
		& =\left\{
		\begin{array}{ll}
			1-\frac{(c-x)^2}{(c-a)(c-b)}-[\alpha\theta_l-(1-\alpha)\theta_r]\big[\frac{c-b}{c-a}-\frac{(c-x)^2}{(c-a)(c-b)}\big], & b<x\leq c-\frac{c-b}{\sqrt{2}};\\
			1-\frac{(c-x)^2}{(c-a)(c-b)}-[\alpha\theta_l-(1-\alpha)\theta_r]\frac{(c-x)^2}{(c-a)(c-b)}, &  c-\frac{c-b}{\sqrt{2}}\leq x<c.\\
		\end{array}
		\right.
	\end{aligned}
\end{equation*}
\par 	If $x\geq c,$ then we have $\tilde{\Phi}_{\tilde{\xi}}(x,y)=1.$ Therefore, $\Phi_{\tilde{\xi}_{\sup}}(x;\alpha)=1, \forall x\geq c.$
\end{proof}
\begin{theorem}\label{thm.7}
	Let $\tilde{\xi}\sim \mathcal{TRI}(a,b,c;\theta_l,\theta_r)$ be a triangular two-fold UV. The reduced single-fold UD via $\alpha$ pessimistic value criteria for any $\alpha \in(0,1)$ is as follows.
	\begin{description}
		\item[ I.] If $x\leq a$, then $\Phi_{\tilde{\xi}_{\inf}}(x;\alpha)=0.$  
		\item[ II.] If $a< x< b$, then
		\[\Phi_{\tilde{\xi}_{\inf}}(x;\alpha)=\left\{
		\begin{array}{ll}
			\frac{(x-a)^2}{(b-a)(c-a)}-[(1-\alpha)\theta_l-\alpha\theta_r]\frac{(x-a)^2}{(b-a)(c-a)}, & a<x\leq a+\frac{b-a}{\sqrt{2}};\\
			\frac{(x-a)^2}{(b-a)(c-a)}-[(1-\alpha)\theta_l-\alpha\theta_r]\big[\frac{b-a}{c-a}-\frac{(x-a)^2}{(b-a)(c-a)}\big], & a+\frac{b-a}{\sqrt{2}}\leq x<b.\\
		\end{array}
		\right.
		\]
		\item[ III.]  If $x=b$, then $\Phi_{\tilde{\xi}_{\inf}}(x;\alpha)=\frac{b-a}{c-a}.$ 
		\item[ IV.] If $b<x< c$, then  
		\[\Phi_{\tilde{\xi}_{\inf}}(x;\alpha)=\left\{
		\begin{array}{ll}
			1-\frac{(c-x)^2}{(c-a)(c-b)}-[(1-\alpha)\theta_l-\alpha\theta_r]\big[\frac{c-b}{c-a}-\frac{(c-x)^2}{(c-a)(c-b)}\big], & b<x\leq c-\frac{c-b}{\sqrt{2}};\\
			1-\frac{(c-x)^2}{(c-a)(c-b)}-[(1-\alpha)\theta_l-\alpha\theta_r]\frac{(c-x)^2}{(c-a)(c-b)}, &  c-\frac{c-b}{\sqrt{2}}\leq x<c.\\
		\end{array}
		\right.
		\] 
		\item[ V.] If $x\geq c$, then  $\Phi_{\tilde{\xi}_{\inf}}(x;\alpha)=1.$ 
	\end{description}
\end{theorem}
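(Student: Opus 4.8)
The plan is to mirror the proof of Theorem \ref{thm.6} line for line, with the single change that the optimistic critical-value formula for a linear UV is replaced by the pessimistic one. By the remark following Definition \ref{def.11}, for each fixed $x$ the two-fold UD $\tilde{\Phi}_{\tilde{\xi}}(x,y)$ is, as a function of $y$, a linear UV of the form $\mathcal{L}(\cdot,\cdot)$. Hence I can invoke Theorem \ref{thm.3}, whose pessimistic value for $\xi\sim\mathcal{L}(a,b)$ is $\xi_{\inf}(\alpha)=(1-\alpha)a+\alpha b$, and apply it slot by slot to the five $x$-regimes of the definition.

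First I would dispose of the degenerate ranges. For $x\leq a$ the two-fold UD is identically $0$ and for $x\geq c$ it is identically $1$, so the reduced distribution equals $0$ and $1$ on these regions; at $x=b$ the UD is the constant $\frac{b-a}{c-a}$, which is carried over unchanged since a constant has trivial critical value. These settle cases I, III, and V.

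The substantive work is cases II and IV. For $a<x<b$ the UD is $\mathcal{L}(A,B)$ with $A,B$ from Definition \ref{def.11}, so Theorem \ref{thm.3} gives $\Phi_{\tilde{\xi}_{\inf}}(x;\alpha)=(1-\alpha)A+\alpha B$. Expanding and collecting the common $\min$ term yields
\[
\frac{(x-a)^2}{(b-a)(c-a)}-\big[(1-\alpha)\theta_l-\alpha\theta_r\big]\min\left\{\frac{(x-a)^2}{(b-a)(c-a)},\ \frac{b-a}{c-a}-\frac{(x-a)^2}{(b-a)(c-a)}\right\}.
\]
The only remaining step is to resolve the $\min$: a one-line computation shows the first argument is the smaller precisely when $(x-a)^2\leq\tfrac12(b-a)^2$, i.e.\ when $x\leq a+\frac{b-a}{\sqrt{2}}$, which splits case II into its two sub-cases. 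Case IV is handled identically using $\mathcal{L}(C,D)$, the threshold $x=c-\frac{c-b}{\sqrt{2}}$ arising from comparing $\frac{(c-x)^2}{(c-a)(c-b)}$ against $\frac{c-b}{c-a}-\frac{(c-x)^2}{(c-a)(c-b)}$.

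I do not expect a genuine obstacle here: the argument is a verbatim adaptation of Theorem \ref{thm.6} in which the weights $\alpha$ and $1-\alpha$ are interchanged, reflecting the difference between the pessimistic formula $(1-\alpha)a+\alpha b$ and the optimistic formula $\alpha a+(1-\alpha)b$. The only points requiring care are keeping the sign of the bracket $[(1-\alpha)\theta_l-\alpha\theta_r]$ correct throughout and confirming that the two $\min$-thresholds coincide with those in Theorem \ref{thm.6}; the latter holds because the thresholds depend only on the geometry of the underlying UD and not on $\alpha$, so no new casework is introduced by switching criteria.
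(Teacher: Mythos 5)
Your proposal is correct and follows essentially the same route as the paper: both start from Theorem \ref{thm.3}'s pessimistic formula $(1-\alpha)A+\alpha B$ applied regime by regime to the linear UVs $\mathcal{L}(A,B)$ and $\mathcal{L}(C,D)$ from Definition \ref{def.11}. The only cosmetic difference is that the paper packages the algebra via the identity $\Phi_{\tilde{\xi}_{\inf}}(x;\alpha)=\Phi_{\tilde{\xi}_{\sup}}(x;1-\alpha)$ and then cites Theorem \ref{thm.6}, whereas you re-expand the expressions and resolve the $\min$ thresholds directly, correctly noting that these thresholds are independent of $\alpha$.
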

\begin{proof}
	Based on Theorem \ref{thm.3}, we have 
	\[\Phi_{\tilde{\xi}_{\inf}}(x;\alpha)=\left\{
	\begin{array}{ll}
		0, & x\leq a;\\
		(1-\alpha)A+\alpha B, &  a<x<b;\\
		\frac{b-a}{c-a}, &  x=b;\\
		(1-\alpha)C+\alpha D, &  b<x<c;\\
		1, &  x\geq c;\\
	\end{array}
	\right.
	 \text{ for any }\alpha \in (0,1).\] 
	 Therefore, we get $\Phi_{\tilde{\xi}_{\inf}}(x;\alpha)=\Phi_{\tilde{\xi}_{\sup}}(x;1-\alpha)$. Using this result, the cases from (I) to (V) can be easily proved from Theorem \ref{thm.6}.
\end{proof}
\begin{theorem}\label{thm.8}
	Let $\tilde{\xi}\sim \mathcal{TRI}(a,b,c;\theta_l,\theta_r)$ be a triangular two-fold UV. The reduced single-fold UD via expected value criteria is as follows.
	\begin{description}
		\item[ I.] If $x\leq a$, then $\Phi_{\tilde{\xi}_{exp}}(x)=0.$  
		\item[ II.] If $a< x< b$, then
		\[\Phi_{\tilde{\xi}_{exp}}(x)=\left\{
		\begin{array}{ll}
			\frac{(x-a)^2}{(b-a)(c-a)}-\frac{(\theta_l-\theta_r)}{2}\cdot\frac{(x-a)^2}{(b-a)(c-a)}, & a<x\leq a+\frac{b-a}{\sqrt{2}};\\
			\frac{(x-a)^2}{(b-a)(c-a)}-\frac{(\theta_l-\theta_r)}{2}\cdot\big[\frac{b-a}{c-a}-\frac{(x-a)^2}{(b-a)(c-a)}\big], & a+\frac{b-a}{\sqrt{2}}\leq x<b.\\
		\end{array}
		\right.
		\]
		\item[ III.]  If $x=b$, then $\Phi_{\tilde{\xi}_{exp}}(x)=\frac{b-a}{c-a}.$ 
		\item[ IV.] If $b< x< c$, then
		\[\Phi_{\tilde{\xi}_{exp}}(x)=\left\{
		\begin{array}{ll}
			1-\frac{(c-x)^2}{(c-a)(c-b)}-\frac{(\theta_l-\theta_r)}{2}\cdot\big[\frac{c-b}{c-a}-\frac{(c-x)^2}{(c-a)(c-b)}\big], & b<x\leq c-\frac{c-b}{\sqrt{2}};\\
			1-\frac{(c-x)^2}{(c-a)(c-b)}-\frac{(\theta_l-\theta_r)}{2}\cdot\frac{(c-x)^2}{(c-a)(c-b)}, &  c-\frac{c-b}{\sqrt{2}}\leq x<c.\\
		\end{array}
		\right.
		\] 
		\item[ V.] If $x\geq c$, then $\Phi_{\tilde{\xi}_{exp}}(x)=1.$     
	\end{description} 
\end{theorem}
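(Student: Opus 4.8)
The plan is to mirror the proofs of Theorems \ref{thm.6} and \ref{thm.7}, exploiting the fact that on each open subinterval the two-fold UD is itself a linear UV. By the remark following Definition \ref{def.11}, we have $\tilde{\Phi}_{\tilde{\xi}}(x,y)=\mathcal{L}(A,B)$ for $a<x<b$ and $\tilde{\Phi}_{\tilde{\xi}}(x,y)=\mathcal{L}(C,D)$ for $b<x<c$, with $A,B,C,D$ exactly as in Definition \ref{def.11}; at the boundary values $x\le a$, $x=b$, and $x\ge c$ the distribution is degenerate and contributes the same constants as in the optimistic and pessimistic cases. Thus only the two open intervals require genuine computation, and the remaining cases (I), (III), and (V) are immediate.

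First I would invoke part (III) of Theorem \ref{thm.3}: the expected value of a linear UV $\mathcal{L}(A,B)$ equals $\frac{A+B}{2}$. Hence on $a<x<b$ the reduced distribution is $\Phi_{\tilde{\xi}_{exp}}(x)=\frac{A+B}{2}$, and on $b<x<c$ it is $\frac{C+D}{2}$. Substituting the expressions for $A$ and $B$, the base terms $\frac{(x-a)^2}{(b-a)(c-a)}$ coincide and survive, while the $\theta$-dependent parts combine as $\frac{-\theta_l+\theta_r}{2}\min\{\cdots\}=-\frac{\theta_l-\theta_r}{2}\min\{\cdots\}$, with the identical cancellation occurring for $C,D$.

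The cleanest route, however, is to observe that $\frac{A+B}{2}=\frac{1}{2}A+\big(1-\frac{1}{2}\big)B$, which is precisely the optimistic critical value $\alpha A+(1-\alpha)B$ from Theorem \ref{thm.3}(I) evaluated at $\alpha=\frac{1}{2}$. Consequently $\Phi_{\tilde{\xi}_{exp}}(x)=\Phi_{\tilde{\xi}_{\sup}}(x;\tfrac{1}{2})$, exactly paralleling the identity $\Phi_{\tilde{\xi}_{\inf}}(x;\alpha)=\Phi_{\tilde{\xi}_{\sup}}(x;1-\alpha)$ used in Theorem \ref{thm.7}. All five cases then follow immediately by setting $\alpha=\frac{1}{2}$ in Theorem \ref{thm.6}, after checking that the coefficient $[\alpha\theta_l-(1-\alpha)\theta_r]$ reduces to $\frac{\theta_l-\theta_r}{2}$ at $\alpha=\frac{1}{2}$.

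There is no serious obstacle here; the only point demanding care is the splitting of the $\min$ at the threshold $x=a+\frac{b-a}{\sqrt{2}}$ (respectively $x=c-\frac{c-b}{\sqrt{2}}$), which governs which branch of each piecewise formula is active. Since this threshold is determined by equating the two arguments of the $\min$ and does not depend on $\alpha$, it is inherited verbatim from Theorem \ref{thm.6}; the case boundaries in the present statement are therefore identical to those already established, and no new analysis is required.
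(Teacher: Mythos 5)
Your proposal is correct and follows essentially the same route as the paper: the paper likewise applies Theorem \ref{thm.3}(III) to get $\frac{A+B}{2}$ and $\frac{C+D}{2}$ on the open intervals, observes $\Phi_{\tilde{\xi}_{exp}}(x)=\Phi_{\tilde{\xi}_{\inf}}(x;\tfrac{1}{2})=\Phi_{\tilde{\xi}_{\sup}}(x;\tfrac{1}{2})$, and reads off all five cases from Theorems \ref{thm.6} and \ref{thm.7}. Your additional remark that the $\min$-splitting thresholds are independent of $\alpha$ is a small point of care the paper leaves implicit, but it does not change the argument.
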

\begin{proof}
	Based on Theorem \ref{thm.3}, we have 
	\[\Phi_{\tilde{\xi}_{exp}}(x)=\left\{
	\begin{array}{ll}
		0, & x\leq a;\\
		\frac{A+B}{2}, &  a<x<b;\\
		\frac{b-a}{c-a}, &  x=b;\\
			\frac{C+D}{2}, &  b<x<c;\\
		1, &  x\geq c.\\
	\end{array}
	\right.
	\] 
	Therefore, we get $\Phi_{\tilde{\xi}_{exp}}(x)=\Phi_{\tilde{\xi}_{\inf}}(x;\frac{1}{2})=\Phi_{\tilde{\xi}_{\sup}}(x;	\frac{1}{2})$. With this result, it is easy to get the cases from (I) to (V) from Theorem \ref{thm.6} and Theorem \ref{thm.7}. 
\end{proof}
\begin{remark}
	For any $\alpha\in(0,1),$ $\Phi_{\tilde{\xi}_{\sup}}(x;\alpha)$ is monotonic increasing function except $\Phi_{\tilde{\xi}_{\sup}}(x;\alpha)=0,\forall x\leq a$ and $\Phi_{\tilde{\xi}_{\sup}}(x;\alpha)=1, \forall x\geq c.$ According to Theorem \ref{thm.1}, $\Phi_{\tilde{\xi}_{\sup}}(x;\alpha)$ is a single-fold UD. Similarly, $\Phi_{\tilde{\xi}_{\inf}}(x;\alpha),\Phi_{\tilde{\xi}_{\exp}}(x)$ are also single-fold UDs. As a result, the reduction strategies we provide are acceptable.
	\end{remark}
\par To understand the efficiency of the reduction methods, we provide an example of a triangular two-fold UV below.
\begin{example}
	If $\tilde{\xi}=\mathcal{TRI}(2,4,5;0.5,0.6)$ is a triangular two-fold UV, then
	\begin{description}
		\item[I.] the reduced single-fold UD via $\alpha$ optimistic value criteria is 
		\[\Phi_{\tilde{\xi}_{\sup}}(x;\alpha)=\left\{
		\begin{array}{ll}
			0, & x\leq 2;\\
			\frac{(x-2)^2}{6}-\big(\frac{11}{10}\alpha-\frac{3}{5}\big)\frac{(x-2)^2}{6}, & 2<x\leq 2+\sqrt{2};\\
			\frac{(x-2)^2}{6}-\big(\frac{11}{10}\alpha-\frac{3}{5}\big)\big[\frac{2}{3}-\frac{(x-2)^2}{6}\big], & 2+\sqrt{2}\leq x<4;\\
				\frac{2}{3}, & x=4;\\
					1-\frac{(5-x)^2}{3}-\big(\frac{11}{10}\alpha-\frac{3}{5}\big)\big[\frac{1}{3}-\frac{(5-x)^2}{3}\big], & 4<x\leq 5-\frac{1}{\sqrt{2}} ;\\
				1-\frac{(5-x)^2}{3}-\big(\frac{11}{10}\alpha-\frac{3}{5}\big)\frac{(5-x)^2}{3}, &  5-\frac{1}{\sqrt{2}}\leq x<5;\\
					1, & x\geq 5;\\
		\end{array}
		\right.
	\text{ for any }\alpha \in (0,1);	\]
	\item[II.] the reduced single-fold UD via $\alpha$ pessimistic value criteria is 
	\[\Phi_{\tilde{\xi}_{\inf}}(x;\alpha)=\left\{
	\begin{array}{ll}
		0, & x\leq 2;\\
		\frac{(x-2)^2}{6}-\big(\frac{1}{2}-\frac{11}{10}\alpha\big)\frac{(x-2)^2}{6}, & 2<x\leq 2+\sqrt{2};\\
		\frac{(x-2)^2}{6}-\big(\frac{1}{2}-\frac{11}{10}\alpha\big)\big[\frac{2}{3}-\frac{(x-2)^2}{6}\big], & 2+\sqrt{2}\leq x<4;\\
		\frac{2}{3}, & x=4;\\
		1-\frac{(5-x)^2}{3}-\big(\frac{1}{2}-\frac{11}{10}\alpha\big)\big[\frac{1}{3}-\frac{(5-x)^2}{3}\big], & 4<x\leq 5-\frac{1}{\sqrt{2}};\\
		1-\frac{(5-x)^2}{3}-\big(\frac{1}{2}-\frac{11}{10}\alpha\big)\frac{(5-x)^2}{3}, &  5-\frac{1}{\sqrt{2}}\leq x<5;\\
		1, & x\geq 5;\\
	\end{array}
	\right.
\text{ for any }\alpha \in (0,1);	\]
	\item[III.] the reduced single-fold UD via expected value criteria is 
	\[\Phi_{\tilde{\xi}_{exp}}(x)=\left\{
	\begin{array}{ll}
		0, & x\leq 2;\\
		\frac{(x-2)^2}{6}+\frac{1}{20}\frac{(x-2)^2}{6}, & 2<x\leq 2+\sqrt{2};\\
		\frac{(x-2)^2}{6}+\frac{1}{20}\big[\frac{2}{3}-\frac{(x-2)^2}{6}\big], & 2+\sqrt{2}\leq x<4;\\
		\frac{2}{3}, & x=4;\\
		1-\frac{(5-x)^2}{3}+\frac{1}{20}\big[\frac{1}{3}-\frac{(5-x)^2}{3}\big], & 4<x\leq 5-\frac{1}{\sqrt{2}};\\
		1-\frac{(5-x)^2}{3}+\frac{1}{20}\frac{(5-x)^2}{3}, &  5-\frac{1}{\sqrt{2}}\leq x<5;\\
		1, & x\geq 5.\\
	\end{array}
	\right.
	\]
	\end{description}
	\end{example}
Here, Figures \ref{fig:1},\ref{fig:2}, and \ref{fig:3} show the reduced single-fold UD of the triangular two-fold UV $\tilde{\xi}=\mathcal{TRI}(2,4,5;0.5,0.6)$ using optimistic, pessimistic, and expected value criteria. Figures \ref{fig:1} and \ref{fig:2} are given as solid figures because they are essentially bi-variate functions in terms of $x$ and $\alpha$. For different values of $\alpha\in (0,1)$, the reduced UDs via optimistic and pessimistic value criteria are shown in Figures \ref{fig:4} and \ref{fig:5} in 2D pictures for clarity. It is observed that for each fixed $\alpha$, the related curve satisfies the conditions in Theorem \ref{thm.1}, corresponding to a UD.
\begin{figure}[htb] 
	\centering 
	\includegraphics[scale=0.225]{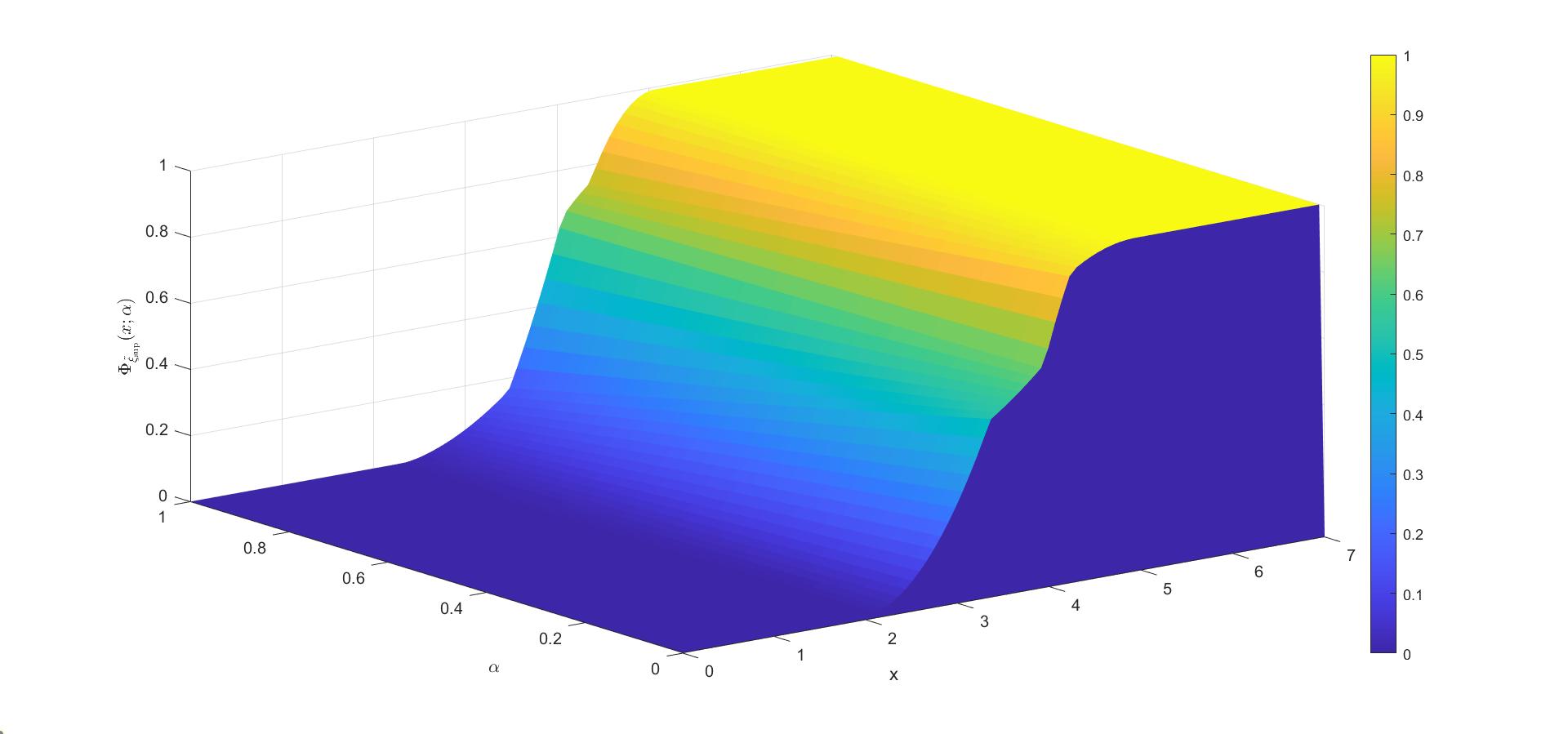} 
	\caption{Reduced single-fold UD of a triangular two-fold UD via $\alpha$ optimistic value criteria.}
	\label{fig:1}
\end{figure}
\begin{figure}[htb] 
	\centering 
	\includegraphics[scale=0.225]{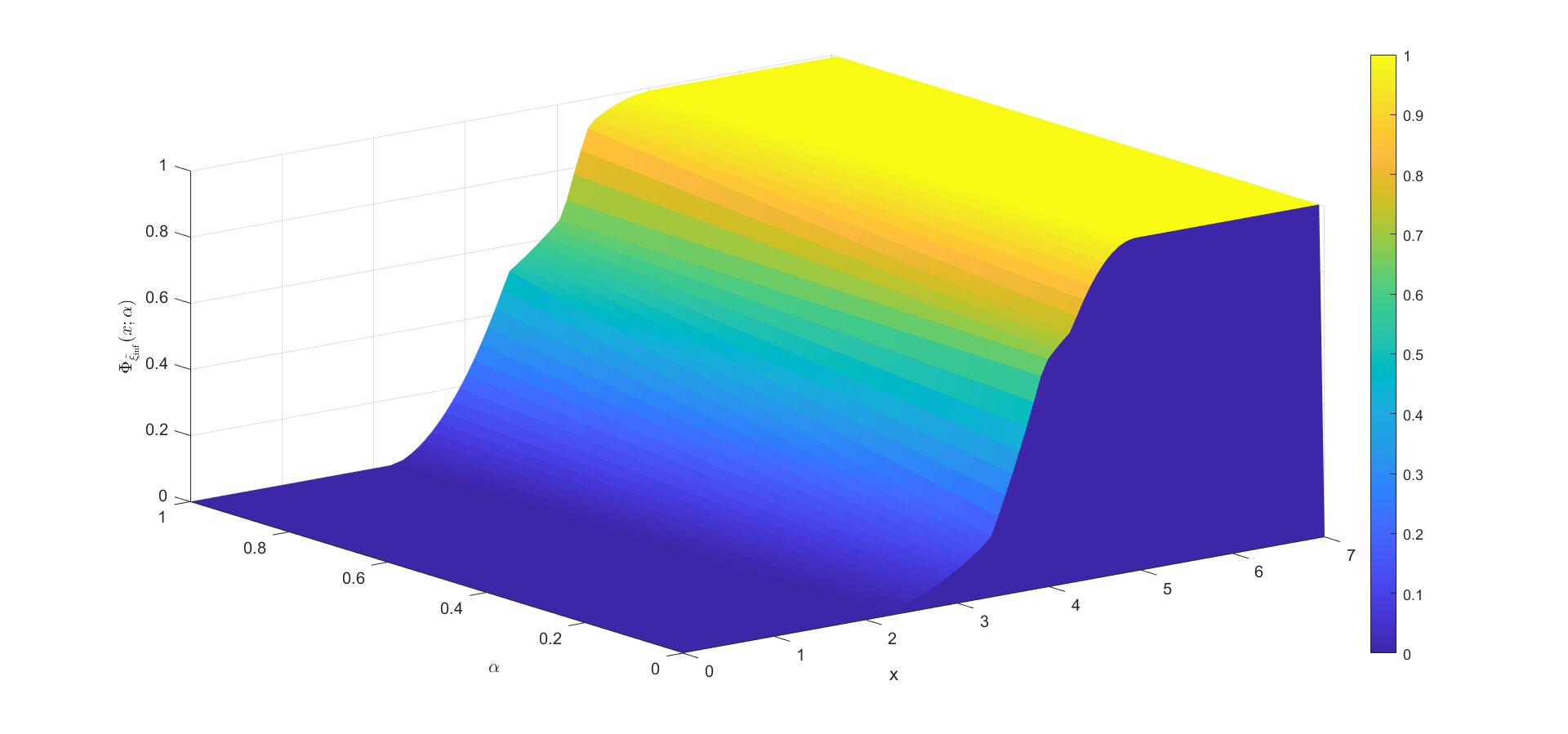} 
	\caption{Reduced single-fold UD of a triangular two-fold UD via $\alpha$ pessimistic value criteria.}
	\label{fig:2}
\end{figure}
\begin{figure}[htb] 
	\centering 
	\includegraphics[scale=0.225]{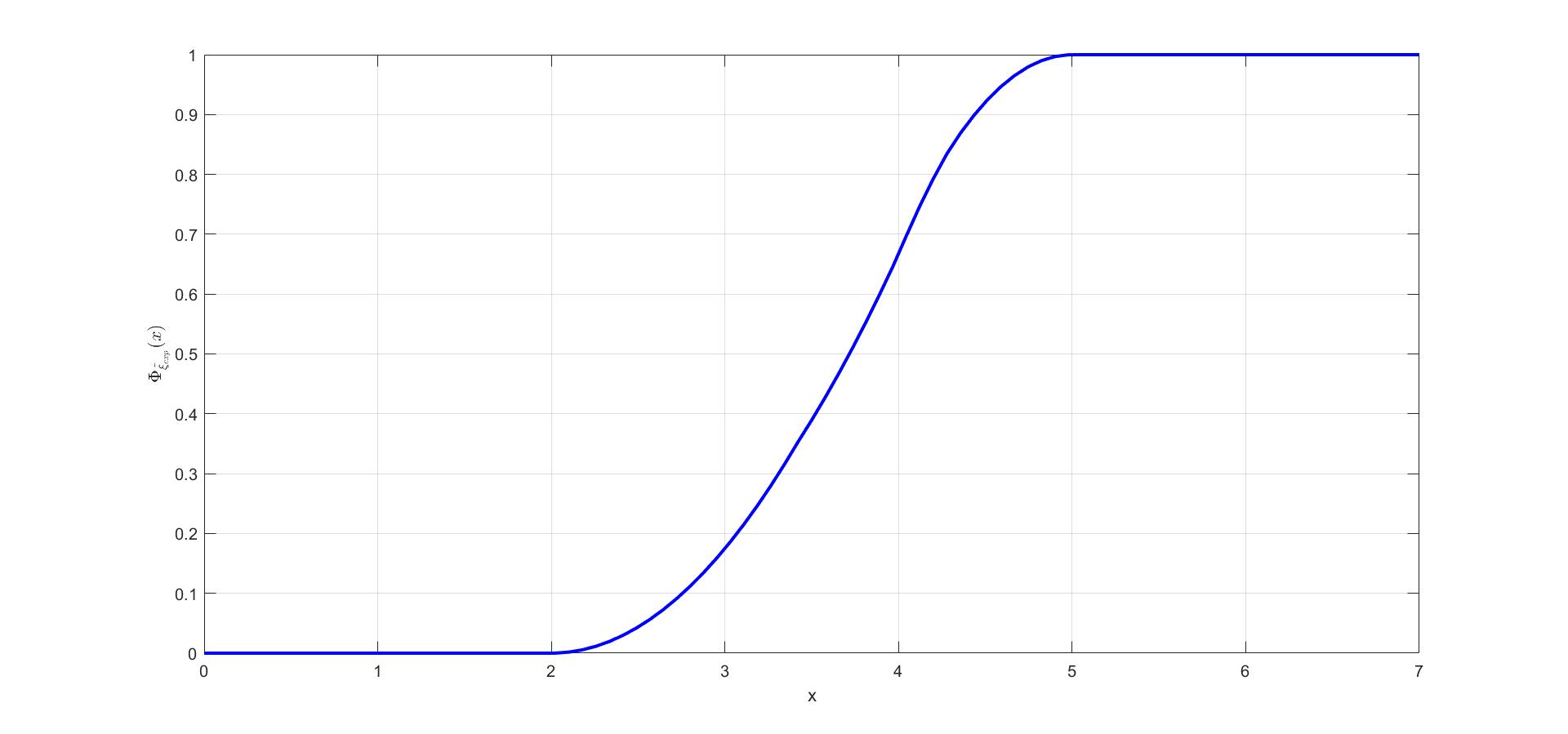} 
	\caption{Reduced single-fold UD of a triangular two-fold UD via expected value criteria.}
	\label{fig:3}
\end{figure}
\begin{figure}[htb]
	\centering
		\includegraphics[scale=0.225]{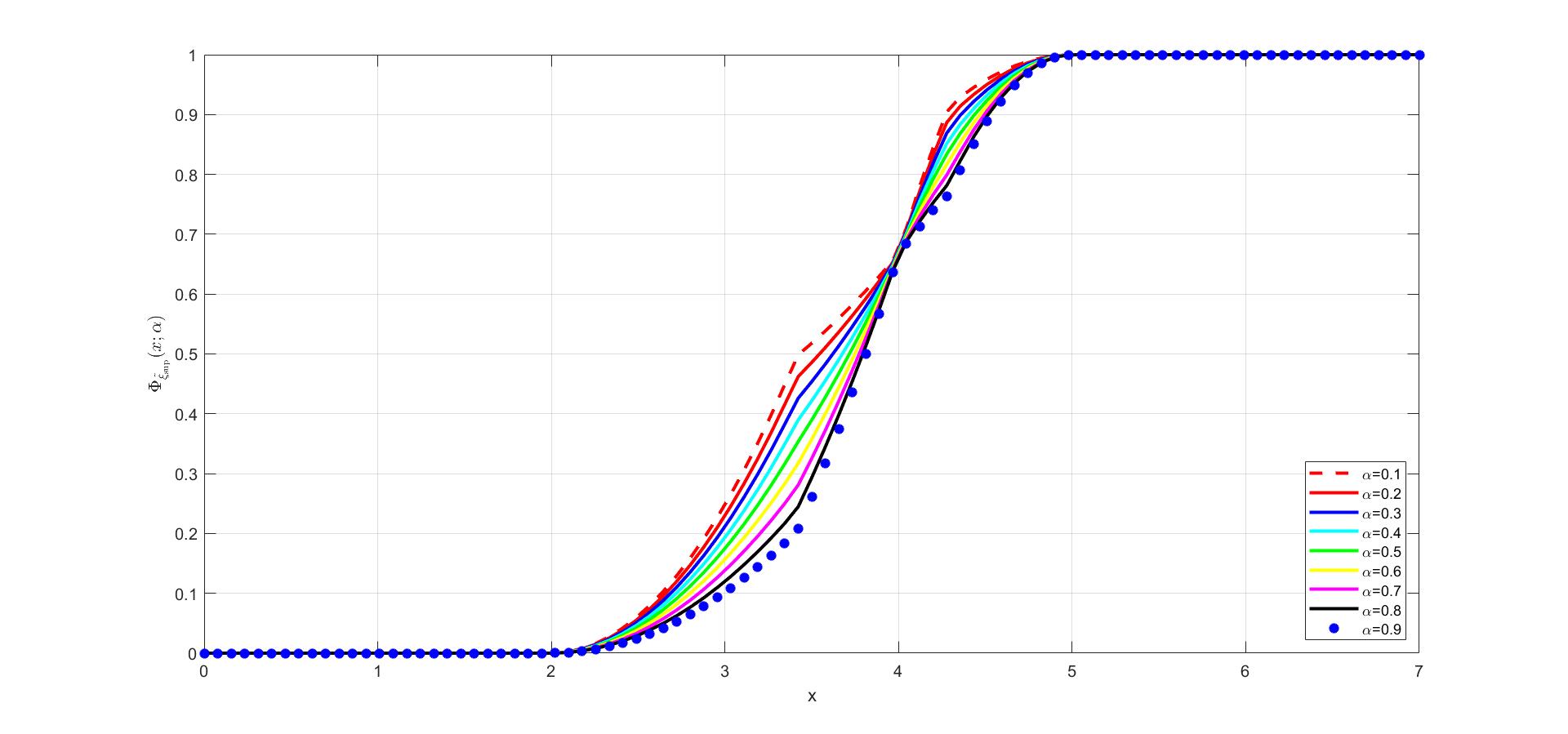}
		\caption{Reduced single-fold UDs of a triangular two-fold UD via optimistic value criteria for different values of $\alpha$.}
		\label{fig:4}
	\end{figure}
	\begin{figure}[htb]
		\centering
		\includegraphics[scale=0.225]{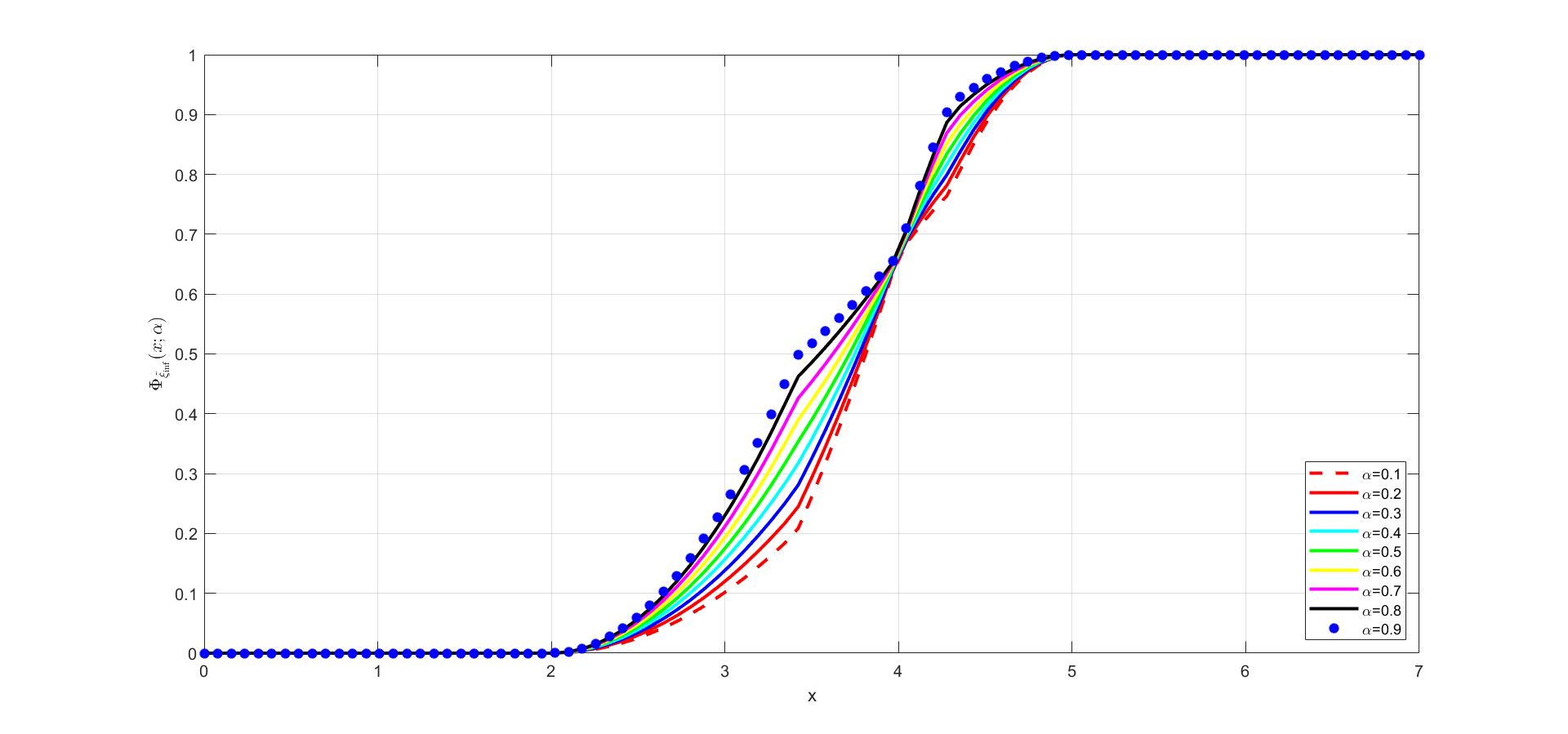}
		\caption{Reduced single-fold UDs of a triangular two-fold UD via pessimistic value criteria for different values of $\alpha$.}
		\label{fig:5}
	\end{figure}
\par Similar to earlier, using optimistic, pessimistic, and expected value criteria, we develop three different reduction methods to convert trapezoidal two-fold UV into single-fold UV. The following theorems are developed in this regard.
\begin{theorem}\label{thm.9}
	Let $\tilde{\xi}\sim \mathcal{TRA}(a,b,c,d;\theta_l,\theta_r)$ be a trapezoidal two-fold UV. The reduced single-fold UD via $\alpha$ optimistic value criteria for any $\alpha \in(0,1)$ is as follows.
	\begin{description}
		\item[ I.] If $x\leq a$, then $\Phi_{\tilde{\xi}_{\sup}}(x;\alpha)=0.$  
		\item[ II.] If $a< x< b$, then 
		\[\Phi_{\tilde{\xi}_{\sup}}(x;\alpha)=\left\{
		\begin{array}{ll}
			\frac{(x-a)^2}{(d+c-a-b)(b-a)}-[\alpha\theta_l-(1-\alpha)\theta_r]\frac{(x-a)^2}{(d+c-a-b)(b-a)}, & a<x\leq a+\frac{b-a}{\sqrt{2}};\\
			\frac{(x-a)^2}{(d+c-a-b)(b-a)}-[\alpha\theta_l-(1-\alpha)\theta_r]\big[\frac{b-a}{d+c-a-b}-\frac{(x-a)^2}{(d+c-a-b)(b-a)}\big], & a+\frac{b-a}{\sqrt{2}}\leq x<b.\\
		\end{array}
		\right.
		\]
		\item[ III.]  If $x=b$, then $\Phi_{\tilde{\xi}_{\sup}}(x;\alpha)=\frac{b-a}{d+c-a-b}.$ 
		\item[ IV.] If $b< x< c$, then
		\[\Phi_{\tilde{\xi}_{\sup}}(x;\alpha)=\left\{
		\begin{array}{ll}
			\frac{2x-a-b}{d+c-a-b}-[\alpha\theta_l-(1-\alpha)\theta_r]\big[\frac{2x-a-b}{d+c-a-b}-\frac{b-a}{d+c-a-b}\big], & b<x\leq \frac{b+c}{2};\\
		\frac{2x-a-b}{d+c-a-b}-[\alpha\theta_l-(1-\alpha)\theta_r]\big[\frac{2c-a-b}{d+c-a-b}-\frac{2x-a-b}{d+c-a-b}\big], &  \frac{b+c}{2}\leq x<c.\\
		\end{array}
		\right.
		\] 
		\item[ V.]  If $x=c$, then $\Phi_{\tilde{\xi}_{\sup}}(x;\alpha)=\frac{2c-a-b}{d+c-a-b}.$ 
		\item[ VI.] If $c< x< d$, then
		\[\Phi_{\tilde{\xi}_{\sup}}(x;\alpha)=\left\{
		\begin{array}{ll}
			1-\frac{(d-x)^2}{(d+c-a-b)(d-c)}-[\alpha\theta_l-(1-\alpha)\theta_r]\big[\frac{d-c}{d+c-a-b}-\frac{(d-x)^2}{(d+c-a-b)(d-c)}\big], & c<x\leq d-\frac{d-c}{\sqrt{2}};\\
			1-\frac{(d-x)^2}{(d+c-a-b)(d-c)}-[\alpha\theta_l-(1-\alpha)\theta_r]\frac{(d-x)^2}{(d+c-a-b)(d-c)}, & d-\frac{d-c}{\sqrt{2}}\leq x<d.\\
		\end{array}
		\right.
		\]
		\item[ VII.] If $x\geq d$, then $\Phi_{\tilde{\xi}_{\sup}}(x;\alpha)=1.$  
	\end{description}
\end{theorem}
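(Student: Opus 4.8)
The plan is to mirror the proof of Theorem \ref{thm.6} exactly, since the trapezoidal two-fold UD from Definition \ref{def.12} has the same piecewise structure as the triangular one: on each of the open intervals $(a,b)$, $(b,c)$, and $(c,d)$ the conditional distribution $\tilde{\Phi}_{\tilde{\xi}}(x,\cdot)$ is a linear UV, namely $\mathcal{L}(A,B)$, $\mathcal{L}(C,D)$, and $\mathcal{L}(E,F)$ respectively, while at $x=b$ and $x=c$ it collapses to the constants $\frac{b-a}{d+c-a-b}$ and $\frac{2c-a-b}{d+c-a-b}$, and it equals $0$ for $x\le a$ and $1$ for $x\ge d$. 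First I would fix $\alpha\in(0,1)$ and dispose of the trivial regions: for $x\le a$ and $x\ge d$ the reduced optimistic distribution inherits the constant values $0$ and $1$, and the two point-mass cases $x=b$ and $x=c$ pass through unchanged.

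The substantive computation happens on the three open intervals. On each I would invoke the optimistic value formula for a linear UV from Theorem \ref{thm.3}, which gives $\Phi_{\tilde{\xi}_{\sup}}(x;\alpha)=\alpha A+(1-\alpha)B$ on $(a,b)$, and analogously $\alpha C+(1-\alpha)D$ and $\alpha E+(1-\alpha)F$ on $(b,c)$ and $(c,d)$. Substituting the definitions of $A,B$ (and the analogous pairs) from Definition \ref{def.12}, the two central $\frac{(x-a)^2}{(d+c-a-b)(b-a)}$-type terms combine into the base distribution value, while the $\theta_l$ and $\theta_r$ contributions collect into the single factor $[\alpha\theta_l-(1-\alpha)\theta_r]$ multiplying the common $\min\{\cdot,\cdot\}$ expression. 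This is precisely the step that reproduces the $[\alpha\theta_l-(1-\alpha)\theta_r]$ coefficient appearing throughout the statement.

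The only real work is resolving the $\min\{\cdot,\cdot\}$ on each interval, which splits the interval at the point where its two arguments coincide. Setting the two arguments equal on $(a,b)$ gives $(x-a)^2=\frac{(b-a)^2}{2}$, i.e. the breakpoint $x=a+\frac{b-a}{\sqrt{2}}$; on the middle (affine) piece $(b,c)$ the balance condition is linear and yields $x=\frac{b+c}{2}$; and on $(c,d)$ one obtains $(d-x)^2=\frac{(d-c)^2}{2}$, i.e. $x=d-\frac{d-c}{\sqrt{2}}$. On the lower sub-interval of each the first argument of the min is the smaller one, and on the upper sub-interval the second is, which is exactly the two-line split recorded in cases II, IV, and VI. I expect this case analysis of the minima---verifying which argument dominates on each side of the breakpoint---to be the main (though still routine) obstacle; everything else is direct substitution into Theorem \ref{thm.3}. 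Finally, as in the remark following Theorem \ref{thm.8}, I would check that the resulting piecewise function is monotonic increasing and takes values in $[0,1]$, so that by Theorem \ref{thm.1} it is a genuine single-fold UD.
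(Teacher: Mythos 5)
Your proposal is correct and follows essentially the same route as the paper's own proof: dispose of the constant pieces, apply the linear-UV optimistic value formula $\alpha A+(1-\alpha)B$ (and its analogues for $C,D$ and $E,F$) from Theorem \ref{thm.3} on each open interval, and resolve the minima at the breakpoints $a+\frac{b-a}{\sqrt{2}}$, $\frac{b+c}{2}$, and $d-\frac{d-c}{\sqrt{2}}$. The breakpoints and the identification of which argument of each min dominates on each sub-interval are all correct, so the argument goes through as planned.
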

\begin{proof}
	Let $\alpha\in (0,1)$.
	 \par If $x\leq a,$ then we have $\tilde{\Phi}_{\tilde{\xi}}(x,y)=0.$ Therefore, $\Phi_{\tilde{\xi}_{\sup}}(x;\alpha)=0, \forall x\leq a.$
	\par If $a< x< b$, then we have $\tilde{\Phi}_{\tilde{\xi}}(x,y)=\mathcal{L}(A,B),$ where $A,B$ are given in Definition \ref{def.12}. So, based on Theorem \ref{thm.3}, we have $\Phi_{\tilde{\xi}_{\sup}}(x;\alpha)=\alpha A+(1-\alpha)B$
	\begin{equation*} 
		\begin{aligned}
			&=\alpha \bigg[\frac{(x-a)^2}{(d+c-a-b)(b-a)}-\theta_l\min\bigg\{\frac{(x-a)^2}{(d+c-a-b)(b-a)},\frac{b-a}{d+c-a-b}-\frac{(x-a)^2}{(d+c-a-b)(b-a)}\bigg\}\bigg]+\\
			&(1-\alpha)\bigg[\frac{(x-a)^2}{(d+c-a-b)(b-a)}+\theta_r\min\bigg\{\frac{(x-a)^2}{(d+c-a-b)(b-a)},\frac{b-a}{d+c-a-b}-\frac{(x-a)^2}{(d+c-a-b)(b-a)}\bigg\}\bigg]\\
			& =\left\{
			\begin{array}{ll}
				\frac{(x-a)^2}{(d+c-a-b)(b-a)}-[\alpha\theta_l-(1-\alpha)\theta_r]\frac{(x-a)^2}{(d+c-a-b)(b-a)}, & a<x\leq a+\frac{b-a}{\sqrt{2}};\\
				\frac{(x-a)^2}{(d+c-a-b)(b-a)}-[\alpha\theta_l-(1-\alpha)\theta_r]\big[\frac{b-a}{d+c-a-b}-\frac{(x-a)^2}{(d+c-a-b)(b-a)}\big], & a+\frac{b-a}{\sqrt{2}}\leq x<b.\\
			\end{array}
			\right.
		\end{aligned}
	\end{equation*}
	\par If $x=b,$ then we have $\tilde{\Phi}_{\tilde{\xi}}(x,y)=\frac{b-a}{d+c-a-b}.$ Therefore, $\Phi_{\tilde{\xi}_{\sup}}(x;\alpha)=\frac{b-a}{d+c-a-b}$ for $x=b.$
	\par If $b< x< c$, then we have $\tilde{\Phi}_{\tilde{\xi}}(x,y)=\mathcal{L}(C,D),$ where $C,D$ are given in Definition \ref{def.12}. So, based on Theorem \ref{thm.3}, we have $	\Phi_{\tilde{\xi}_{\sup}}(x;\alpha)=\alpha C+(1-\alpha)D$
	\begin{equation*} 
		\begin{aligned}
			&=\alpha \bigg[\frac{2x-a-b}{d+c-a-b}-\theta_l\min\bigg\{\frac{2x-a-b}{d+c-a-b}-\frac{b-a}{d+c-a-b},\frac{2c-a-b}{d+c-a-b}-\frac{2x-a-b}{d+c-a-b}\bigg\}\bigg]+\\
			&(1-\alpha)\bigg[\frac{2x-a-b}{d+c-a-b}+\theta_r\min\bigg\{\frac{2x-a-b}{d+c-a-b}-\frac{b-a}{d+c-a-b},\frac{2c-a-b}{d+c-a-b}-\frac{2x-a-b}{d+c-a-b}\bigg\}\bigg]\\
			& =\left\{
			\begin{array}{ll}
				\frac{2x-a-b}{d+c-a-b}-[\alpha\theta_l-(1-\alpha)\theta_r]\big[\frac{2x-a-b}{d+c-a-b}-\frac{b-a}{d+c-a-b}\big], & b<x\leq \frac{b+c}{2};\\
				\frac{2x-a-b}{d+c-a-b}-[\alpha\theta_l-(1-\alpha)\theta_r]\big[\frac{2c-a-b}{d+c-a-b}-\frac{2x-a-b}{d+c-a-b}\big], &  \frac{b+c}{2}\leq x<c.\\
			\end{array}
			\right.
		\end{aligned}
	\end{equation*}
\par If $x=c,$ then we have $\tilde{\Phi}_{\tilde{\xi}}(x,y)=\frac{2c-a-b}{d+c-a-b}.$ Therefore, $\Phi_{\tilde{\xi}_{\sup}}(x;\alpha)=\frac{2c-a-b}{d+c-a-b}$ for $x=c.$
\par If $c< x< d$, then we have $\tilde{\Phi}_{\tilde{\xi}}(x,y)=\mathcal{L}(E,F),$ where $E,F$ are given in Definition \ref{def.12}. So, based on Theorem \ref{thm.3}, we have $\Phi_{\tilde{\xi}_{\sup}}(x;\alpha)=\alpha E+(1-\alpha)F$
\begin{equation*} 
	\begin{aligned}
		&=\alpha \bigg[1-\frac{(d-x)^2}{(d+c-a-b)(d-c)}-\theta_l\min\bigg\{\frac{d-c}{d+c-a-b}-\frac{(d-x)^2}{(d+c-a-b)(d-c)},\frac{(d-x)^2}{(d+c-a-b)(d-c)}\bigg\}\bigg]+\\
		&(1-\alpha)\bigg[1-\frac{(d-x)^2}{(d+c-a-b)(d-c)}+\theta_r\min\bigg\{\frac{d-c}{d+c-a-b}-\frac{(d-x)^2}{(d+c-a-b)(d-c)},\frac{(d-x)^2}{(d+c-a-b)(d-c)}\bigg\}\bigg]\\
		& =\left\{
		\begin{array}{ll}
			1-\frac{(d-x)^2}{(d+c-a-b)(d-c)}-[\alpha\theta_l-(1-\alpha)\theta_r]\big[\frac{d-c}{d+c-a-b}-\frac{(d-x)^2}{(d+c-a-b)(d-c)}\big], & c<x\leq d-\frac{d-c}{\sqrt{2}};\\
			1-\frac{(d-x)^2}{(d+c-a-b)(d-c)}-[\alpha\theta_l-(1-\alpha)\theta_r]\frac{(d-x)^2}{(d+c-a-b)(d-c)}, & d-\frac{d-c}{\sqrt{2}}\leq x<d.\\
		\end{array}
		\right.
	\end{aligned}
\end{equation*}
	\par 	If $x\geq d,$ then we have $\tilde{\Phi}_{\tilde{\xi}}(x,y)=1.$ Therefore, $\Phi_{\tilde{\xi}_{\sup}}(x;\alpha)=1, \forall x\geq d.$
\end{proof}
\begin{theorem}\label{thm.10}
	Let $\tilde{\xi}\sim \mathcal{TRA}(a,b,c,d;\theta_l,\theta_r)$ be a trapezoidal two-fold UV. The reduced single-fold UD via $\alpha$ pessimistic value criteria for any $\alpha \in(0,1)$ is as follows.
	\begin{description}
		\item[ I.] If $x\leq a$, then $\Phi_{\tilde{\xi}_{\inf}}(x;\alpha)=0.$  
		\item[ II.] If $a< x< b$, then 
		\[\Phi_{\tilde{\xi}_{\inf}}(x;\alpha)=\left\{
		\begin{array}{ll}
			\frac{(x-a)^2}{(d+c-a-b)(b-a)}-[(1-\alpha)\theta_l-\alpha\theta_r]\frac{(x-a)^2}{(d+c-a-b)(b-a)}, & a<x\leq a+\frac{b-a}{\sqrt{2}};\\
			\frac{(x-a)^2}{(d+c-a-b)(b-a)}-[(1-\alpha)\theta_l-\alpha\theta_r]\big[\frac{b-a}{d+c-a-b}-\frac{(x-a)^2}{(d+c-a-b)(b-a)}\big], & a+\frac{b-a}{\sqrt{2}}\leq x<b.\\
		\end{array}
		\right.
		\]
		\item[ III.]  If $x=b$, then $\Phi_{\tilde{\xi}_{\inf}}(x;\alpha)=\frac{b-a}{d+c-a-b}.$ 
		\item[ IV.] If $b< x< c$, then
		\[\Phi_{\tilde{\xi}_{\inf}}(x;\alpha)=\left\{
		\begin{array}{ll}
			\frac{2x-a-b}{d+c-a-b}-[(1-\alpha)\theta_l-\alpha\theta_r]\big[\frac{2x-a-b}{d+c-a-b}-\frac{b-a}{d+c-a-b}\big], & b<x\leq \frac{b+c}{2};\\
			\frac{2x-a-b}{d+c-a-b}-[(1-\alpha)\theta_l-\alpha\theta_r]\big[\frac{2c-a-b}{d+c-a-b}-\frac{2x-a-b}{d+c-a-b}\big], &  \frac{b+c}{2}\leq x<c.\\
		\end{array}
		\right.
		\] 
		\item[ V.]  If $x=c$, then $\Phi_{\tilde{\xi}_{\inf}}(x;\alpha)=\frac{2c-a-b}{d+c-a-b}.$ 
		\item[ VI.] If $c< x< d$, then
		\[\Phi_{\tilde{\xi}_{\inf}}(x;\alpha)=\left\{
		\begin{array}{ll}
			1-\frac{(d-x)^2}{(d+c-a-b)(d-c)}-[(1-\alpha)\theta_l-\alpha\theta_r]\big[\frac{d-c}{d+c-a-b}-\frac{(d-x)^2}{(d+c-a-b)(d-c)}\big], & c<x\leq d-\frac{d-c}{\sqrt{2}};\\
			1-\frac{(d-x)^2}{(d+c-a-b)(d-c)}-[(1-\alpha)\theta_l-\alpha\theta_r]\frac{(d-x)^2}{(d+c-a-b)(d-c)}, & d-\frac{d-c}{\sqrt{2}}\leq x<d.\\
		\end{array}
		\right.
		\]
		\item[ VII.] If $x\geq d$, then $\Phi_{\tilde{\xi}_{\inf}}(x;\alpha)=1.$  
	\end{description}
\end{theorem}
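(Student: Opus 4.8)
The plan is to follow exactly the route used for the pessimistic reduction in the triangular case (Theorem \ref{thm.7}), since the trapezoidal two-fold UD is, on each open subinterval, again a linear UD $\mathcal{L}(\cdot,\cdot)$ whose endpoints are recorded in Definition \ref{def.12}. First I would split according to the seven regimes for $x$ listed there. On the degenerate regimes $x\le a$, $x=b$, $x=c$, and $x\ge d$ the two-fold UD is a constant, so the pessimistic value coincides with that constant and parts (I), (III), (V), (VII) follow immediately.

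On each of the three genuinely linear regimes $a<x<b$, $b<x<c$, and $c<x<d$ we have $\tilde{\Phi}_{\tilde{\xi}}(x,y)=\mathcal{L}(A,B)$, $\mathcal{L}(C,D)$, and $\mathcal{L}(E,F)$ respectively, with $A,\dots,F$ as in Definition \ref{def.12}. Applying part (II) of Theorem \ref{thm.3} to a linear UV $\mathcal{L}(p,q)$ gives the $\alpha$ pessimistic value $(1-\alpha)p+\alpha q$, so
\begin{equation*}
\Phi_{\tilde{\xi}_{\inf}}(x;\alpha)=\left\{
\begin{array}{ll}
(1-\alpha)A+\alpha B, & a<x<b;\\
(1-\alpha)C+\alpha D, & b<x<c;\\
(1-\alpha)E+\alpha F, & c<x<d.\\
\end{array}
\right.
\end{equation*}
The key observation, exactly as in Theorem \ref{thm.7}, is that this pessimistic combination is the optimistic combination with $\alpha$ replaced by $1-\alpha$; that is, $\Phi_{\tilde{\xi}_{\inf}}(x;\alpha)=\Phi_{\tilde{\xi}_{\sup}}(x;1-\alpha)$. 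I would therefore read off the result by substituting $1-\alpha$ for $\alpha$ in Theorem \ref{thm.9}, under which the coefficient $\alpha\theta_l-(1-\alpha)\theta_r$ becomes $(1-\alpha)\theta_l-\alpha\theta_r$, producing precisely the formulas claimed in parts (II), (IV), (VI).

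The only point that genuinely needs checking is that the internal breakpoints of the piecewise formulas, namely $a+\frac{b-a}{\sqrt{2}}$, $\frac{b+c}{2}$, and $d-\frac{d-c}{\sqrt{2}}$, are the same in the pessimistic case as in the optimistic case. These breakpoints arise solely from where the two arguments of the $\min$ defining $A,\dots,F$ cross; for instance $(x-a)^2/((d+c-a-b)(b-a))$ equals $(b-a)/(d+c-a-b)-(x-a)^2/((d+c-a-b)(b-a))$ exactly at $x=a+\frac{b-a}{\sqrt{2}}$, and similarly the middle branch crosses at $x=\frac{b+c}{2}$. Since each such crossing depends only on $x$ and not on $\alpha$, it is unaffected by the substitution $\alpha\mapsto 1-\alpha$, so the same case split transfers verbatim. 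This is the mild obstacle; everything else is the mechanical evaluation of $(1-\alpha)p+\alpha q$ on each branch, and once the $\alpha$-independence of the breakpoints is noted the theorem reduces entirely to Theorem \ref{thm.9}.
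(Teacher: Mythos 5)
Your proposal is correct and follows essentially the same route as the paper: apply Theorem \ref{thm.3}(II) on each linear regime to get $(1-\alpha)A+\alpha B$, etc., observe that $\Phi_{\tilde{\xi}_{\inf}}(x;\alpha)=\Phi_{\tilde{\xi}_{\sup}}(x;1-\alpha)$, and read off all seven cases from Theorem \ref{thm.9}. Your additional remark that the piecewise breakpoints depend only on $x$ (not on $\alpha$) and hence survive the substitution $\alpha\mapsto 1-\alpha$ is a worthwhile detail the paper leaves implicit.
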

\begin{proof}
	Based on Theorem \ref{thm.3}, we have 
	\[\Phi_{\tilde{\xi}_{\inf}}(x;\alpha)=\left\{
	\begin{array}{ll}
		0, & x\leq a;\\
		(1-\alpha)A+\alpha B, &  a<x<b;\\
		\frac{b-a}{d+c-a-b}, &  x=b;\\
		(1-\alpha)C+\alpha D, &  b<x<c;\\
		\frac{2c-a-b}{d+c-a-b}, &  x=c;\\
		(1-\alpha)E+\alpha F, &  c<x<d;\\
		1, &  x\geq d;\\
	\end{array}
	\right.
	\text{ for any }\alpha \in (0,1).\] 
	Therefore, we get $\Phi_{\tilde{\xi}_{\inf}}(x;\alpha)=\Phi_{\tilde{\xi}_{\sup}}(x;1-\alpha)$. Using this result, the cases from (I) to (VII) can be easily proved from Theorem \ref{thm.9}.
\end{proof}
\begin{theorem}\label{thm.11}
	Let $\tilde{\xi}\sim \mathcal{TRA}(a,b,c,d;\theta_l,\theta_r)$ be a trapezoidal two-fold UV. The reduced single-fold UD via expected value criteria is as follows.
	\begin{description}
		\item[ I.] If $x\leq a$, then $\Phi_{\tilde{\xi}_{exp}}(x)=0.$  
		\item[ II.] If $a< x< b$, then 
		\[\Phi_{\tilde{\xi}_{exp}}(x)=\left\{
		\begin{array}{ll}
			\frac{(x-a)^2}{(d+c-a-b)(b-a)}-\frac{(\theta_l-\theta_r)}{2}\cdot\frac{(x-a)^2}{(d+c-a-b)(b-a)}, & a<x\leq a+\frac{b-a}{\sqrt{2}};\\
			\frac{(x-a)^2}{(d+c-a-b)(b-a)}-\frac{(\theta_l-\theta_r)}{2}\cdot\big[\frac{b-a}{d+c-a-b}-\frac{(x-a)^2}{(d+c-a-b)(b-a)}\big], & a+\frac{b-a}{\sqrt{2}}\leq x<b.\\
		\end{array}
		\right.
		\]
		\item[ III.]  If $x=b$, then $\Phi_{\tilde{\xi}_{exp}}(x)=\frac{b-a}{d+c-a-b}.$ 
		\item[ IV.] If $b< x< c$, then
		\[\Phi_{\tilde{\xi}_{exp}}(x)=\left\{
		\begin{array}{ll}
			\frac{2x-a-b}{d+c-a-b}-\frac{(\theta_l-\theta_r)}{2}\cdot\big[\frac{2x-a-b}{d+c-a-b}-\frac{b-a}{d+c-a-b}\big], & b<x\leq \frac{b+c}{2};\\
			\frac{2x-a-b}{d+c-a-b}-\frac{(\theta_l-\theta_r)}{2}\cdot\big[\frac{2c-a-b}{d+c-a-b}-\frac{2x-a-b}{d+c-a-b}\big], &  \frac{b+c}{2}\leq x<c.\\
		\end{array}
		\right.
		\] 
		\item[ V.]  If $x=c$, then $\Phi_{\tilde{\xi}_{exp}}(x)=\frac{2c-a-b}{d+c-a-b}.$ 
		\item[ VI.] If $c< x< d$, then
		\[\Phi_{\tilde{\xi}_{exp}}(x)=\left\{
		\begin{array}{ll}
			1-\frac{(d-x)^2}{(d+c-a-b)(d-c)}-\frac{(\theta_l-\theta_r)}{2}\cdot\big[\frac{d-c}{d+c-a-b}-\frac{(d-x)^2}{(d+c-a-b)(d-c)}\big], & c<x\leq d-\frac{d-c}{\sqrt{2}};\\
			1-\frac{(d-x)^2}{(d+c-a-b)(d-c)}-\frac{(\theta_l-\theta_r)}{2}\cdot\frac{(d-x)^2}{(d+c-a-b)(d-c)}, & d-\frac{d-c}{\sqrt{2}}\leq x<d.\\
		\end{array}
		\right.
		\]
		\item[ VII.] If $x\geq d$, then $\Phi_{\tilde{\xi}_{exp}}(x)=1.$  
	\end{description}
\end{theorem}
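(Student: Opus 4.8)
The plan is to mirror the proof of Theorem \ref{thm.8}, which handled the expected-value reduction in the triangular case, and to reduce everything to the optimistic reduction already established in Theorem \ref{thm.9} (equivalently the pessimistic one of Theorem \ref{thm.10}). The starting point is the remark following Definition \ref{def.12}: for each fixed $x$ in an open subinterval $(a,b)$, $(b,c)$, or $(c,d)$, the two-fold distribution value $\tilde{\Phi}_{\tilde{\xi}}(x,y)$ is, as a function of $y$, a linear uncertain variable $\mathcal{L}(A,B)$, $\mathcal{L}(C,D)$, or $\mathcal{L}(E,F)$, while at $x=b$ and $x=c$ it is the deterministic values $\frac{b-a}{d+c-a-b}$ and $\frac{2c-a-b}{d+c-a-b}$.

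First I would invoke Theorem \ref{thm.3}(III), which gives the expected value of a linear uncertain variable $\mathcal{L}(A,B)$ as $\frac{A+B}{2}$. Applying this criterion piecewise yields
\[
\Phi_{\tilde{\xi}_{exp}}(x)=\left\{
\begin{array}{ll}
0, & x\leq a;\\
\frac{A+B}{2}, & a<x<b;\\
\frac{b-a}{d+c-a-b}, & x=b;\\
\frac{C+D}{2}, & b<x<c;\\
\frac{2c-a-b}{d+c-a-b}, & x=c;\\
\frac{E+F}{2}, & c<x<d;\\
1, & x\geq d.\\
\end{array}
\right.
\]
The key observation is then that setting $\alpha=\frac{1}{2}$ collapses the two reduction weights to a common value: the optimistic weight $\alpha\theta_l-(1-\alpha)\theta_r$ and the pessimistic weight $(1-\alpha)\theta_l-\alpha\theta_r$ both equal $\frac{\theta_l-\theta_r}{2}$ at $\alpha=\frac{1}{2}$. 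Hence $\Phi_{\tilde{\xi}_{exp}}(x)=\Phi_{\tilde{\xi}_{\inf}}(x;\frac{1}{2})=\Phi_{\tilde{\xi}_{\sup}}(x;\frac{1}{2})$, and each of the seven cases of the claimed formula drops out of the corresponding case of Theorem \ref{thm.9} (or Theorem \ref{thm.10}) by direct substitution of $\alpha=\frac{1}{2}$.

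There is no genuine obstacle here; the only step requiring any care is confirming that the minimum-terms defining $A,\dots,F$ resolve in exactly the same way as in Theorem \ref{thm.9}, so that the interior breakpoints $a+\frac{b-a}{\sqrt{2}}$, $\frac{b+c}{2}$, and $d-\frac{d-c}{\sqrt{2}}$ are inherited without change. Since the resolution of each minimum is governed solely by $x$ and is independent of $\alpha$, these breakpoints coincide with those already computed in Theorem \ref{thm.9}. Consequently no fresh case analysis is needed, and the explicit piecewise expression for $\Phi_{\tilde{\xi}_{exp}}(x)$ follows at once.
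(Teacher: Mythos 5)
Your proposal is correct and follows essentially the same route as the paper: apply Theorem \ref{thm.3}(III) piecewise to get $\frac{A+B}{2}$, $\frac{C+D}{2}$, $\frac{E+F}{2}$ on the open subintervals, identify the result with $\Phi_{\tilde{\xi}_{\sup}}(x;\frac{1}{2})=\Phi_{\tilde{\xi}_{\inf}}(x;\frac{1}{2})$, and read off all seven cases from Theorem \ref{thm.9}. Your added check that the minimum-terms and breakpoints are independent of $\alpha$ is a small but welcome piece of explicitness the paper leaves implicit.
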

\begin{proof}
	Based on Theorem \ref{thm.3}, we have 
	\[\Phi_{\tilde{\xi}_{exp}}(x)=\left\{
	\begin{array}{ll}
		0, & x\leq a;\\
		\frac{A+B}{2}, &  a<x<b;\\
		\frac{b-a}{d+c-a-b}, &  x=b;\\
		\frac{C+D}{2}, &  b<x<c;\\
		\frac{2c-a-b}{d+c-a-b}, &  x=c;\\
		\frac{E+F}{2}, &  c<x<d;\\
		1, &  x\geq d.\\
	\end{array}
	\right.
	\] 
	Therefore, we get $\Phi_{\tilde{\xi}_{exp}}(x)=\Phi_{\tilde{\xi}_{\inf}}(x;\frac{1}{2})=\Phi_{\tilde{\xi}_{\sup}}(x;	\frac{1}{2})$. With this result, it is easy to get the cases from (I) to (VII) from Theorem \ref{thm.9} and Theorem \ref{thm.10}. 
\end{proof}
\begin{remark}
	Similar to earlier remark, for any $\alpha\in(0,1),$ $\Phi_{\tilde{\xi}_{\sup}}(x;\alpha)$ is monotonic increasing function except $\Phi_{\tilde{\xi}_{\sup}}(x;\alpha)=0,\forall x\leq a$ and $\Phi_{\tilde{\xi}_{\sup}}(x;\alpha)=1, \forall x\geq d.$ According to Theorem \ref{thm.1}, $\Phi_{\tilde{\xi}_{\sup}}(x;\alpha)$ is a single-fold UD. Similarly, $\Phi_{\tilde{\xi}_{\inf}}(x;\alpha),\Phi_{\tilde{\xi}_{\exp}}(x)$ are also single-fold UDs. As a result, our recommended strategies for reduction are acceptable.
\end{remark}
\par Here also, we provide an example of a trapezoidal two-fold UV to understand the efficiency of the reduction methods.
\begin{example}
	If $\tilde{\xi}=\mathcal{TRA}(2,4,6,8;0.5,0.6)$ is a trapezoidal two-fold UV, then
	\begin{description}
		\item[I.] the reduced single-fold UD via $\alpha$ optimistic value criteria is 
		\[\Phi_{\tilde{\xi}_{\sup}}(x;\alpha)=\left\{
		\begin{array}{ll}
			0, & x\leq 2;\\
			\frac{(x-2)^2}{16}-\big(\frac{11}{10}\alpha-\frac{3}{5}\big)\frac{(x-2)^2}{16}, & 2<x\leq 2+\sqrt{2};\\
			\frac{(x-2)^2}{16}-\big(\frac{11}{10}\alpha-\frac{3}{5}\big)\big[\frac{1}{4}-\frac{(x-2)^2}{16}\big], & 2+\sqrt{2}\leq x<4;\\
			\frac{1}{4}, & x=4;\\
			\frac{2x-6}{8}-\big(\frac{11}{10}\alpha-\frac{3}{5}\big)\big[\frac{2x-6}{8}-\frac{1}{4}\big], & 4<x\leq 5;\\
			\frac{2x-6}{8}-\big(\frac{11}{10}\alpha-\frac{3}{5}\big)\big[\frac{3}{4}-\frac{2x-6}{8}\big], &  5\leq x<6;\\
			\frac{3}{4}, & x=6;\\
			1-\frac{(8-x)^2}{16}-\big(\frac{11}{10}\alpha-\frac{3}{5}\big)\big[\frac{1}{4}-\frac{(8-x)^2}{16}\big], & 6<x\leq 8-\sqrt{2};\\
			1-\frac{(8-x)^2}{16}-\big(\frac{11}{10}\alpha-\frac{3}{5}\big)\frac{(8-x)^2}{16}, &  8-\sqrt{2}\leq x<8;\\
			1, & x\geq 8;\\
		\end{array}
		\right.
		\text{ for any }\alpha \in (0,1);	\]
		\item[II.] the reduced single-fold UD via $\alpha$ pessimistic value criteria is 
		\[\Phi_{\tilde{\xi}_{\inf}}(x;\alpha)=\left\{
		\begin{array}{ll}
			0, & x\leq 2;\\
			\frac{(x-2)^2}{16}-\big(\frac{1}{2}-\frac{11}{10}\alpha\big)\frac{(x-2)^2}{16}, & 2<x\leq 2+\sqrt{2};\\
			\frac{(x-2)^2}{16}-\big(\frac{1}{2}-\frac{11}{10}\alpha\big)\big[\frac{1}{4}-\frac{(x-2)^2}{16}\big], & 2+\sqrt{2}\leq x<4;\\
			\frac{1}{4}, & x=4;\\
			\frac{2x-6}{8}-\big(\frac{1}{2}-\frac{11}{10}\alpha\big)\big[\frac{2x-6}{8}-\frac{1}{4}\big], & 4<x\leq 5;\\
			\frac{2x-6}{8}-\big(\frac{1}{2}-\frac{11}{10}\alpha\big)\big[\frac{3}{4}-\frac{2x-6}{8}\big], &  5\leq x<6;\\
			\frac{3}{4}, & x=6;\\
			1-\frac{(8-x)^2}{16}-\big(\frac{1}{2}-\frac{11}{10}\alpha\big)\big[\frac{1}{4}-\frac{(8-x)^2}{16}\big], & 6<x\leq 8-\sqrt{2};\\
			1-\frac{(8-x)^2}{16}-\big(\frac{1}{2}-\frac{11}{10}\alpha\big)\frac{(8-x)^2}{16}, &  8-\sqrt{2}\leq x<8;\\
			1, & x\geq 8;\\
		\end{array}
		\right.
		\text{ for any }\alpha \in (0,1);	\]
		\item[III.] the reduced single-fold UD via expected value criteria is 
		\[\Phi_{\tilde{\xi}_{exp}}(x)=\left\{
		\begin{array}{ll}
			0, & x\leq 2;\\
			\frac{(x-2)^2}{16}+\frac{1}{20}\frac{(x-2)^2}{16}, & 2<x\leq 2+\sqrt{2};\\
			\frac{(x-2)^2}{16}+\frac{1}{20}\big[\frac{1}{4}-\frac{(x-2)^2}{16}\big], & 2+\sqrt{2}\leq x<4;\\
			\frac{1}{4}, & x=4;\\
			\frac{2x-6}{8}+\frac{1}{20}\big[\frac{2x-6}{8}-\frac{1}{4}\big], & 4<x\leq 5;\\
			\frac{2x-6}{8}+\frac{1}{20}\big[\frac{3}{4}-\frac{2x-6}{8}\big], &  5\leq x<6;\\
			\frac{3}{4}, & x=6;\\
			1-\frac{(8-x)^2}{16}+\frac{1}{20}\big[\frac{1}{4}-\frac{(8-x)^2}{16}\big], & 6<x\leq 8-\sqrt{2};\\
			1-\frac{(8-x)^2}{16}+\frac{1}{20}\frac{(8-x)^2}{16}, &  8-\sqrt{2}\leq x<8;\\
			1, & x\geq 8.\\
		\end{array}
		\right.
		\]
	\end{description}
\end{example}
Similar to earlier, Figures \ref{fig:6},\ref{fig:7}, and \ref{fig:8} show the reduced single-fold UD of a trapezoidal two-fold UV $\tilde{\xi}=\mathcal{TRA}(2,4,6,8;0.5,0.6)$ using optimistic, pessimistic, and expected value criteria. Figures \ref{fig:6} and \ref{fig:7} in particular are presented as solid figures because they are essentially bi-variate functions with respect to $x$ and $\alpha$. For different values of $\alpha\in (0,1)$, the reduced UDs via optimistic and pessimistic value criteria are shown in Figures \ref{fig:9} and \ref{fig:10} in 2D pictures for clarity. It is observed that for each fixed $\alpha$, the related curve satisfies the conditions in Theorem \ref{thm.1}, corresponding to a UD.
\begin{figure}[htb] 
	\centering 
	\includegraphics[scale=0.225]{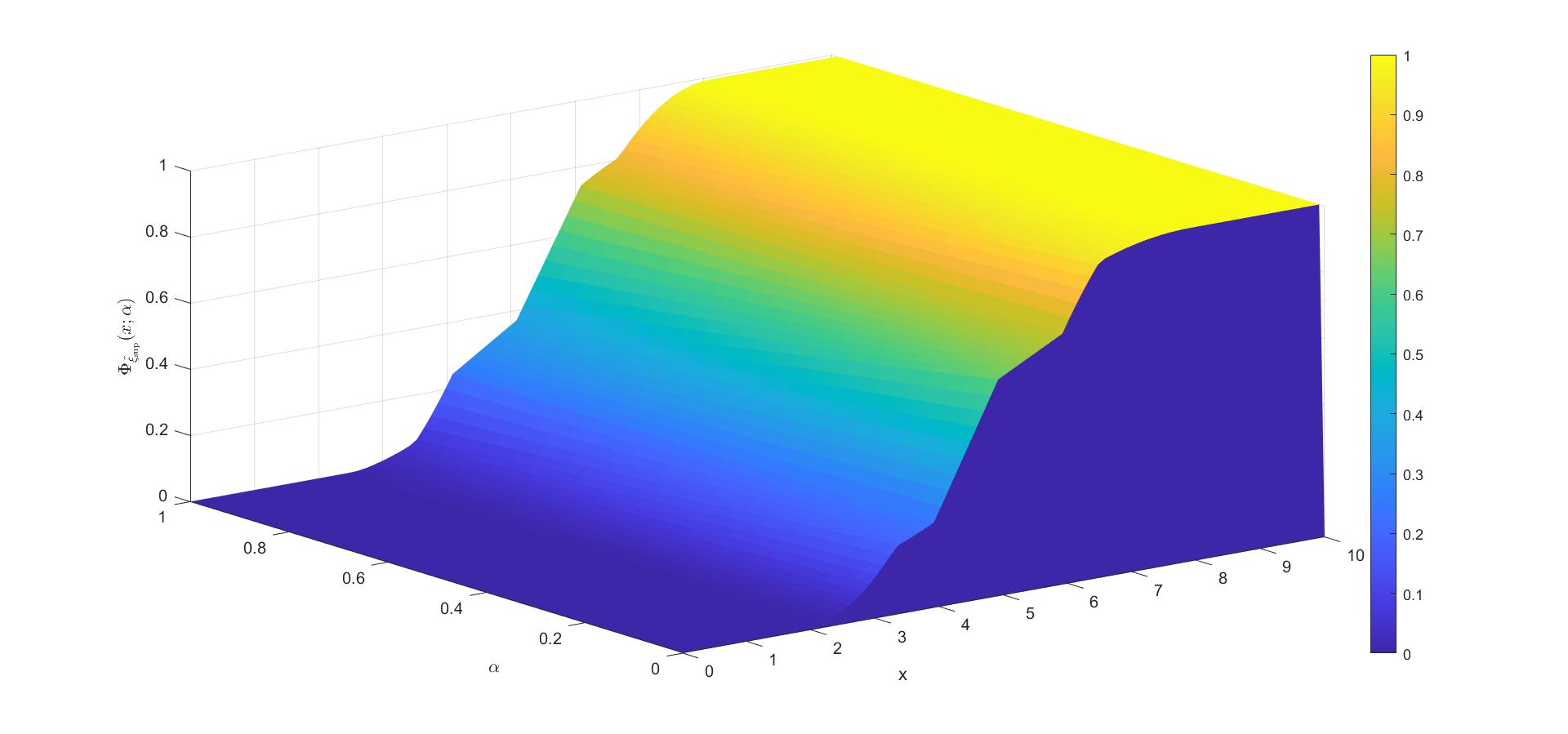} 
	\caption{Reduced single-fold UD of a trapezoidal two-fold UD via $\alpha$ optimistic value criteria.}
	\label{fig:6}
\end{figure}
\begin{figure}[htb] 
	\centering 
	\includegraphics[scale=0.225]{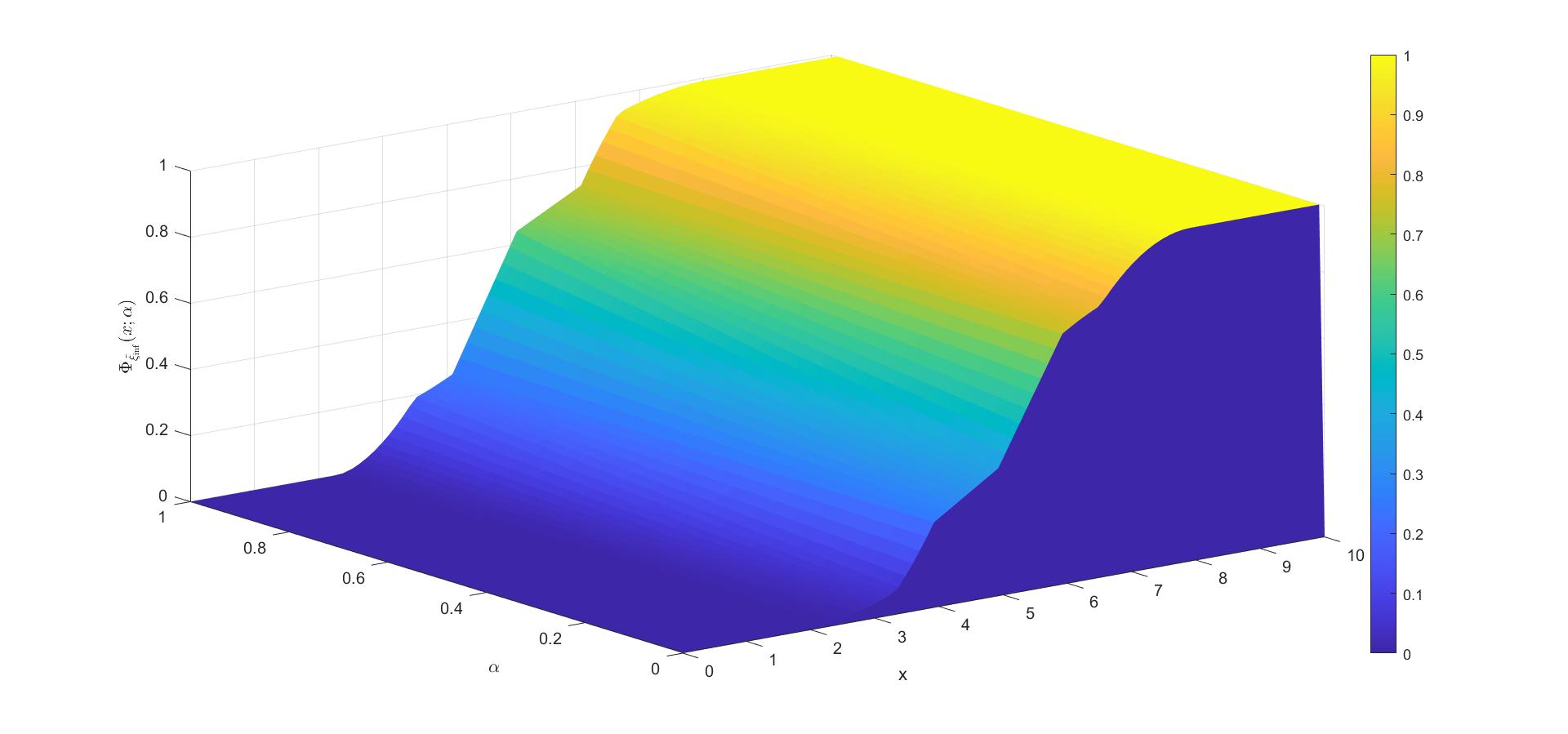} 
	\caption{Reduced single-fold UD of a trapezoidal two-fold UD via $\alpha$ pessimistic value criteria.}
	\label{fig:7}
\end{figure}
\begin{figure}[htb] 
	\centering 
	\includegraphics[scale=0.225]{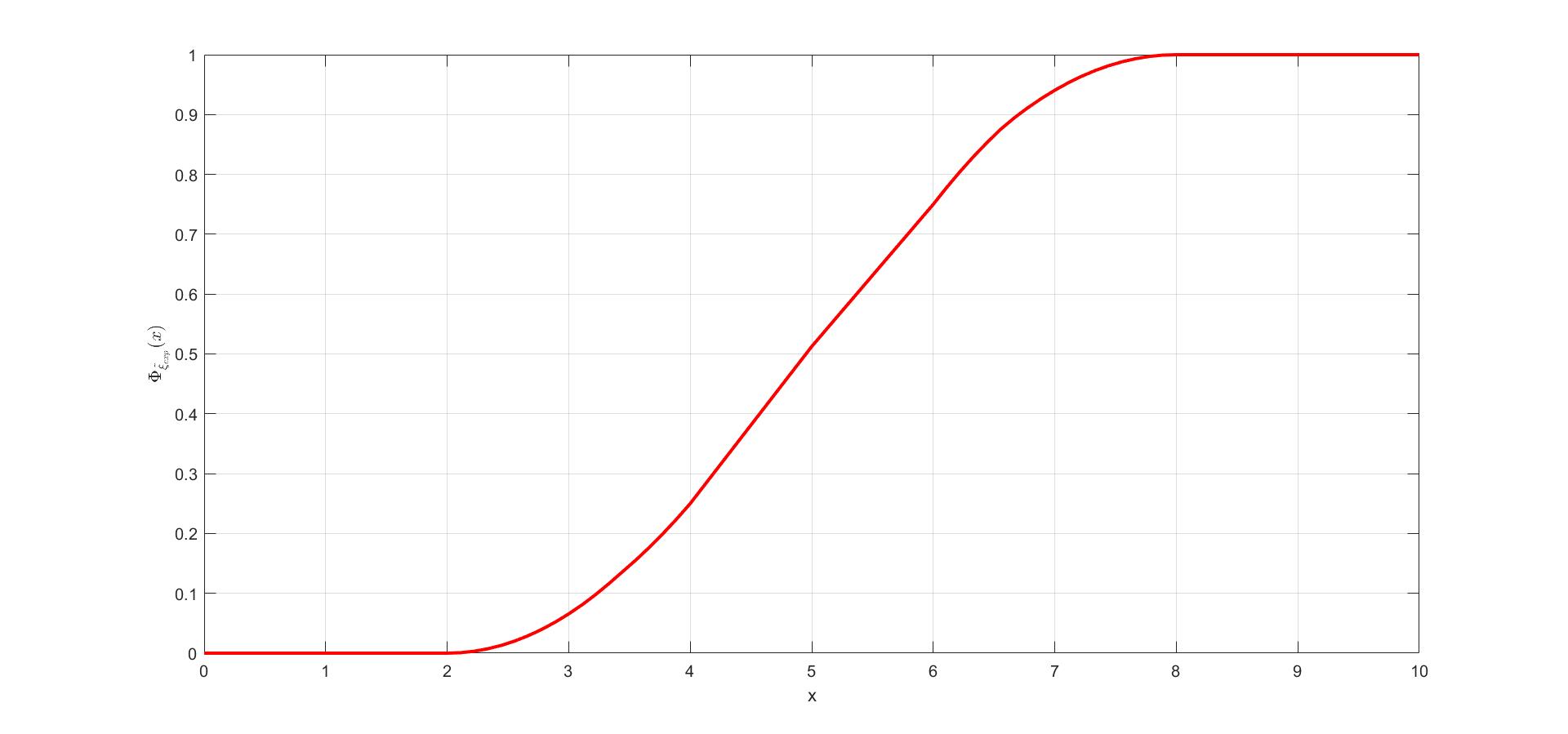} 
	\caption{Reduced single-fold UD of a trapezoidal two-fold UD via expected value criteria.}
	\label{fig:8}
\end{figure}
\begin{figure}[htb]
	\centering
		\includegraphics[scale=0.225]{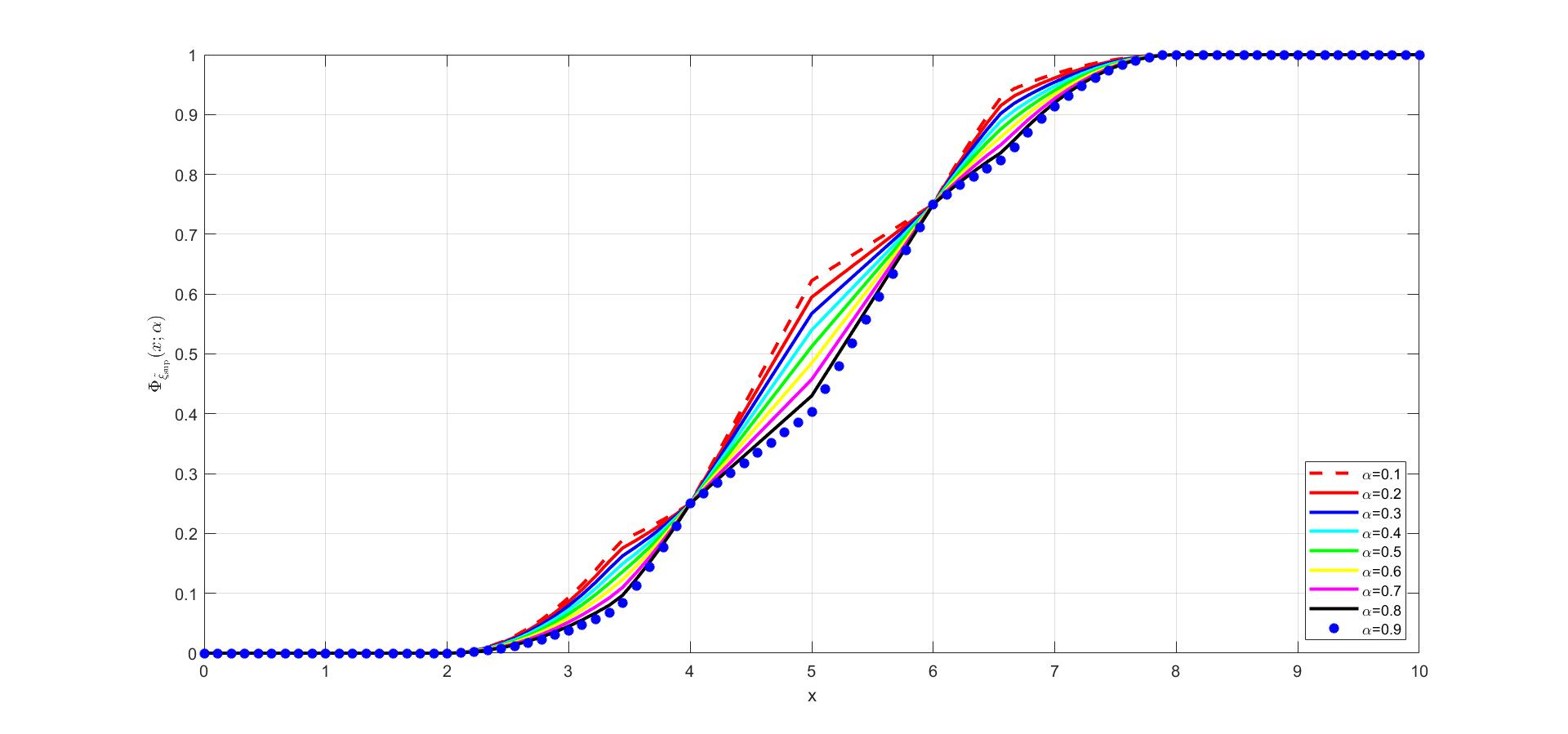}
		\caption{Reduced single-fold UDs of a trapezoidal two-fold UD via optimistic value criteria for different values of $\alpha$.}
		\label{fig:9}
	\end{figure}
	\begin{figure}[htb]
		\centering
		\includegraphics[scale=0.225]{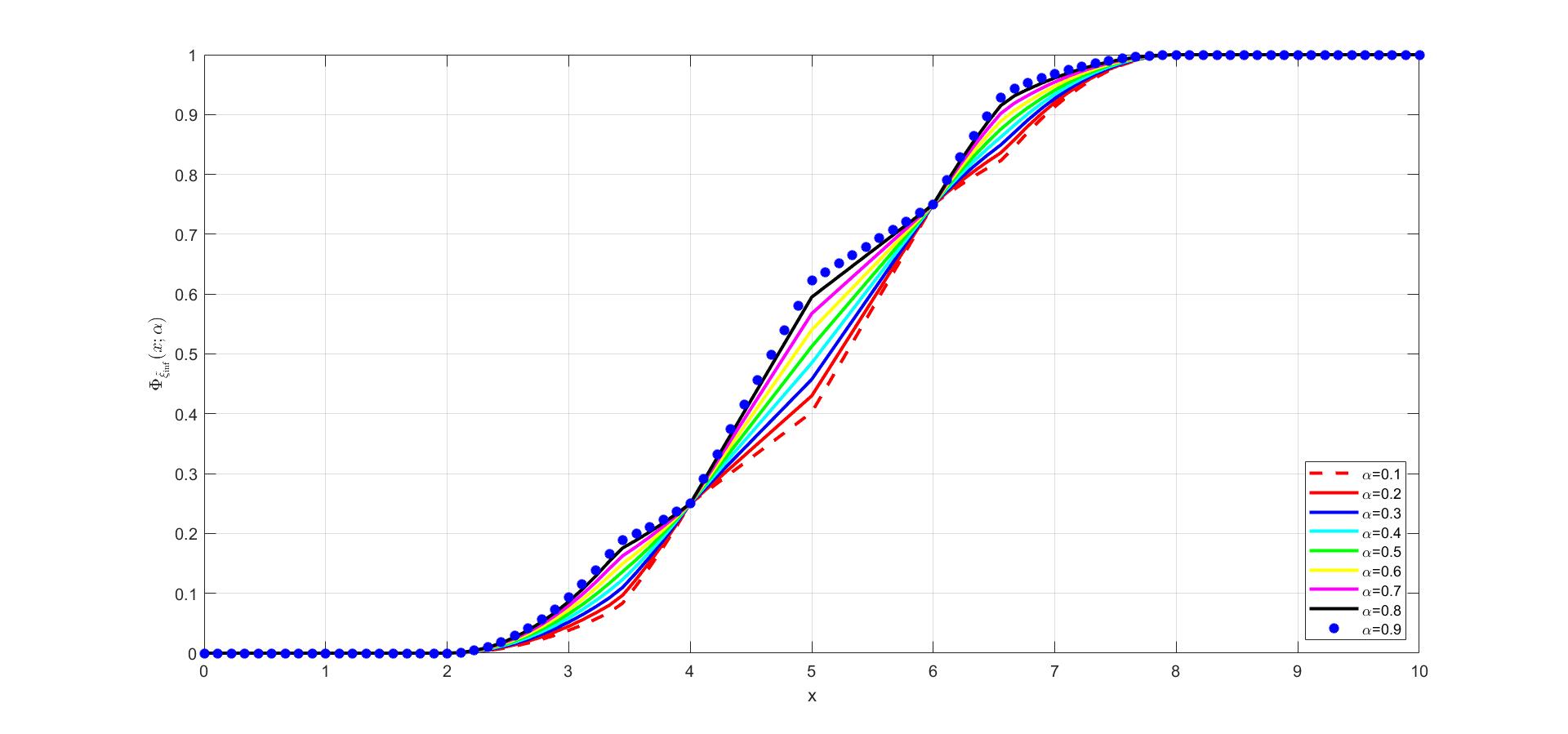}
		\caption{Reduced single-fold UDs of a trapezoidal two-fold UD via pessimistic value criteria for different values of $\alpha$.}
		\label{fig:10}
	\end{figure}
\section{ Geometric Programming Problem with Two-fold Uncertain Coefficients }\label{sec.4} 
The conventional GP model assumes that the coefficients are precise and exact. However, in the real-world GP model, the coefficients may be imprecise and ambiguous. To overcome the difficulty, we propose to solve the GP problem in an uncertain environment with the coefficients being triangular and trapezoidal two-fold UVs. To do so, first, we develop the conventional GP problem into a GP problem with coefficients that are two-fold UVs. In an uncertain environment, the GP problem can be formulated as
	\begin{equation}\label{eq.1}
	\begin{aligned}
		&	\min \quad \tilde{f}_0{(\textbf{x})}=\sum\limits_{i=1}^{N_0}{\tilde{\beta}_{i0}}^2\prod\limits_{j=1}^{n}x_j^{\alpha_{0ij}}\\
		&	\text{s.t.}\\
		& \qquad \tilde{f}_k{(\textbf{x})}=\sum\limits_{i=1}^{N_k}{\tilde{\beta}_{ik}}^2\prod\limits_{j=1}^{n}x_j^{\alpha_{kij}}\leq 1,k=1,2,\ldots K,
	\end{aligned}
\end{equation}
where the coefficients ${\tilde{\beta}_{ik}}^2$ are independent two-fold UVs, $x_j>0$, $\alpha_{kij} \in \mathbb{R},\forall i,j,k$. 
\par The goal of this study is to create the equivalent deterministic form of Problem \ref{eq.1}. To do so, we must first convert two-fold UVs ${\tilde{\beta}_{ik}}^2$ into single-fold UVs using optimistic, pessimistic, and expected value criteria. If ${\tilde{\beta}_{ik}}^2$ is triangular two-fold UV, then it can be reduced into single-fold UV via $\alpha$ optimistic, $\alpha$ pessimistic, and expected value criteria based on Theorem \ref{thm.6}, Theorem \ref{thm.7}, and Theorem \ref{thm.8}, respectively. Similarly, if ${\tilde{\beta}_{ik}}^2$ is trapezoidal two-fold UV, then it can be reduced into single-fold UV via $\alpha$ optimistic, $\alpha$ pessimistic, and expected value criteria based on Theorem \ref{thm.9}, Theorem \ref{thm.10}, and Theorem \ref{thm.11}, respectively. Let ${\tilde{\beta}_{ik}}^1$ be the reduced single-fold UV with the reduced single-fold UD $\Phi_{\tilde{\beta}_{ik}^1}$. After reducing ${\tilde{\beta}_{ik}}^2$ into ${\tilde{\beta}_{ik}}^1$, we consider Problem \ref{eq.1} in a chance-constrained uncertain-based framework.  In this framework, the problem is as follows. 
\begin{equation}\label{eq.2}
	\begin{aligned}
		&	\min \quad \mathbb{E}\bigg[\sum\limits_{i=1}^{N_0}{\tilde{\beta}_{i0}}^1\prod\limits_{j=1}^{n}x_j^{\alpha_{0ij}}\bigg]\\
		&	\text{s.t.}\\
		& \qquad \mathcal{M}\bigg(\sum\limits_{i=1}^{N_k}{\tilde{\beta}_{ik}}^1\prod\limits_{j=1}^{n}x_j^{\alpha_{kij}}\leq 1\bigg)\geq \gamma,k=1,2,\ldots K,
	\end{aligned}
\end{equation} 
where $\mathcal{M}$ is the single-fold uncertain measure, $\gamma\in (0,1)$ is a predefined confidence level. To derive the equivalent deterministic form of Problem \ref{eq.2}, we use the following theorem developed by Liu \cite{Liu 2015}.
\begin{theorem}\cite{Liu 2015}\label{thm.12}
	Assume the optimization problem in a chance-constrained uncertain based framework as
\begin{equation}\label{eq.3}
	\begin{aligned}
		&	\min \quad \mathbb{E}\bigg[\tilde{f}_0{(\textbf{x};\xi_1,\xi_2,\ldots,\xi_n)}\bigg]\\
		&	\text{s.t.}\\
		& \qquad \mathcal{M}\bigg(\tilde{f}_k{(\textbf{x};\xi_1,\xi_2,\ldots,\xi_n)}\leq 0\bigg)\geq \gamma,k=1,2,\ldots K,
	\end{aligned}
\end{equation}
where $\gamma\in (0,1)$ is a predefined confidence level. Let $\xi_1,\xi_2,\ldots,\xi_n$ be independent UVs having regular UDs $\Phi_{\xi_1},\Phi_{\xi_2},\ldots,\Phi_{\xi_n}$, respectively. If the objective and constraint functions are strictly increasing with respect to $\xi_1,\xi_2,\ldots,\xi_n$, then the deterministic form of Problem \ref{eq.3} is as follows.
\begin{equation}\label{eq.4}
	\begin{aligned}
		&	\min \quad \int\limits_{0}^{1}\tilde{f}_0{\big(\textbf{x};\Phi_{\xi_1}^{-1}(\gamma),\Phi_{\xi_2}^{-1}(\gamma),\ldots,\Phi_{\xi_n}^{-1}(\gamma)\big)}d\gamma\\
		&	\text{s.t.}\\
		& \qquad \tilde{f}_k{\big(\textbf{x};\Phi_{\xi_1}^{-1}(\gamma),\Phi_{\xi_2}^{-1}(\gamma),\ldots,\Phi_{\xi_n}^{-1}(\gamma)\big)}\leq 0,k=1,2,\ldots K.
	\end{aligned}
\end{equation}
\end{theorem}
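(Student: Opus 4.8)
The plan is to decompose Theorem \ref{thm.12} into two reductions that can be treated independently: turning each chance constraint $\mathcal{M}(\tilde{f}_k\leq 0)\geq\gamma$ into the deterministic inequality in Problem \ref{eq.4}, and rewriting the expected-value objective $\mathbb{E}[\tilde{f}_0]$ as the integral in Problem \ref{eq.4}. Both reductions rest on a single operational fact, which I would state first: if $\eta=f(\xi_1,\ldots,\xi_n)$ where $f$ is strictly increasing in each argument and $\xi_1,\ldots,\xi_n$ are independent regular UVs, then $\eta$ is again a regular UV whose inverse UD is $\Psi^{-1}(\gamma)=f(\Phi_{\xi_1}^{-1}(\gamma),\ldots,\Phi_{\xi_n}^{-1}(\gamma))$.

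For the constraints, I would fix $\textbf{x}$ and $k$ and let $\Psi_k$ be the UD of the UV $\tilde{f}_k(\textbf{x};\xi_1,\ldots,\xi_n)$. By the measure inversion theorem (Theorem \ref{thm.2}), $\mathcal{M}(\tilde{f}_k\leq 0)=\Psi_k(0)$. Since a UD is monotonic increasing (Theorem \ref{thm.1}) and $\tilde{f}_k$ is regular, the requirement $\Psi_k(0)\geq\gamma$ is equivalent to $\Psi_k^{-1}(\gamma)\leq 0$. Substituting the operational law gives $\tilde{f}_k(\textbf{x};\Phi_{\xi_1}^{-1}(\gamma),\ldots,\Phi_{\xi_n}^{-1}(\gamma))\leq 0$, which is exactly the deterministic constraint of Problem \ref{eq.4}.

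For the objective, let $\Psi_0$ be the UD of $\tilde{f}_0(\textbf{x};\xi_1,\ldots,\xi_n)$. By Definition \ref{def.13} together with the remark following Theorem \ref{thm.2}, the expected value can be written as $\mathbb{E}[\tilde{f}_0]=\int_{-\infty}^{\infty}r\,\Psi_0'(r)\,dr$; the substitution $\gamma=\Psi_0(r)$, legitimate because $\tilde{f}_0$ is regular, recasts this as $\int_{0}^{1}\Psi_0^{-1}(\gamma)\,d\gamma$. Inserting the operational-law expression $\Psi_0^{-1}(\gamma)=\tilde{f}_0(\textbf{x};\Phi_{\xi_1}^{-1}(\gamma),\ldots,\Phi_{\xi_n}^{-1}(\gamma))$ turns the integrand into precisely the objective of Problem \ref{eq.4}.

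The main obstacle is justifying the operational law itself, since everything else is bookkeeping. Here I would set $v_i=\Phi_{\xi_i}^{-1}(\gamma)$ and examine the event $A=\{f(\xi_1,\ldots,\xi_n)\leq f(v_1,\ldots,v_n)\}$. Strict monotonicity of $f$ sandwiches $A$ between $\bigcap_{i}\{\xi_i\leq v_i\}$ and $\bigcup_{i}\{\xi_i\leq v_i\}$: on the intersection every $\xi_i\leq v_i$ forces $\eta\leq f(v_1,\ldots,v_n)$, while on the complement of the union every $\xi_i> v_i$ forces $\eta> f(v_1,\ldots,v_n)$. Independence of the $\xi_i$ makes the uncertain measure distribute as a minimum over intersections and a maximum over unions, and since each marginal event satisfies $\mathcal{M}\{\xi_i\leq v_i\}=\gamma$, both bounding events carry measure $\min_i\gamma=\max_i\gamma=\gamma$. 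Monotonicity of $\mathcal{M}$ then pins $\mathcal{M}(A)=\gamma$, i.e. $\Psi_0(f(v_1,\ldots,v_n))=\gamma$, which is the claimed inverse-UD identity. Since Theorem \ref{thm.12} is itself cited from Liu's uncertainty theory, one may alternatively invoke this operational law directly; the only genuine verification is that independence, regularity, and strict monotonicity legitimately let $\mathcal{M}$ commute with the min and max.
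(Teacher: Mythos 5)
The paper does not prove Theorem \ref{thm.12} at all: it is imported verbatim from Liu's \emph{Uncertainty Theory} \cite{Liu 2015} and used as a black box to pass from Problem \ref{eq.2} to Problem \ref{eq.5}, so there is no in-paper proof to compare yours against. Your reconstruction is essentially the standard argument from the cited source and it is correct: the constraint reduction via $\mathcal{M}(\tilde{f}_k\leq 0)=\Psi_k(0)\geq\gamma\iff\Psi_k^{-1}(\gamma)\leq 0$ (measure inversion plus regularity), the objective reduction via $\mathbb{E}[\eta]=\int_{-\infty}^{\infty}r\,\Psi_0'(r)\,dr=\int_0^1\Psi_0^{-1}(\gamma)\,d\gamma$ (the remark after Theorem \ref{thm.2} plus the substitution $\gamma=\Psi_0(r)$), and the operational law established by sandwiching $\{f(\xi)\leq f(v)\}$ between $\bigcap_i\{\xi_i\leq v_i\}$ and $\bigcup_i\{\xi_i\leq v_i\}$ and using that independence turns $\mathcal{M}$ of the intersection into a minimum and of the union into a maximum. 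Two small points worth tightening: monotonicity of $\mathcal{M}$ (needed to squeeze $\mathcal{M}(A)$ between the two bounds) is not an axiom in Definition \ref{def.1} but follows from normality, duality and subadditivity, so it deserves a line; and the regularity of $\eta=f(\xi_1,\ldots,\xi_n)$, which you invoke to invert $\Psi_k$ and to justify the change of variables in the expectation, tacitly uses continuity of $f$ in addition to strict monotonicity --- harmless here since the posynomial terms of Problem \ref{eq.2} are continuous and strictly increasing in the coefficients for $x_j>0$, but it should be said. Also the final displayed identity should read $\Psi(f(v_1,\ldots,v_n))=\gamma$ for the generic $\Psi$ rather than $\Psi_0$.
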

\par Based on Theorem \ref{thm.12}, Problem \ref{eq.2} becomes in a deterministic form as
\begin{equation}\label{eq.5}
	\begin{aligned}
		&	\min \quad \sum\limits_{i=1}^{N_0}\beta_{i0}\prod\limits_{j=1}^{n}x_j^{\alpha_{0ij}}\\
		&	\text{s.t.}\\
		& \qquad \sum\limits_{i=1}^{N_k}\beta_{ik}\prod\limits_{j=1}^{n}x_j^{\alpha_{kij}}\leq 1,k=1,2,\ldots K,
	\end{aligned}
\end{equation} 
where $\gamma\in (0,1)$ is a predefined confidence level, $\beta_{i0}=\int\limits_{0}^{1}\Phi_{\tilde{\beta}_{i0}^1}^{-1}(\gamma)d\gamma,i=1,2,\ldots, N_0;$ $\beta_{ik}=\Phi_{\tilde{\beta}_{ik}^1}^{-1}(\gamma),i=1,2,\ldots,N_k;k=1,2,\ldots,K.$ Problem \ref{eq.5} is a deterministic primal GP problem, where $N_0$ and $N_k$ are the total number of terms of objective and $k^{th}$ constraint, respectively. To define the dual problem, let $N=\sum\limits_{k=0}^{K}N_k$ be the total numbers of terms presents in Problem \ref{eq.5}, and $\delta_{ik}$ be the dual variables such that $\lambda_k=\sum\limits_{i=1}^{N_k}{\delta_{ik}},k=0,1,2,\ldots,K.$ Then the dual problem is 
\begin{equation}\label{eq.6}
	\begin{aligned}
		&\max \quad V(\delta)=\prod_{i=1}^{N}\bigg(\frac{\beta_{ik}}{{\delta_{ik}}}\bigg)^{{\delta_{ik}}}\bigg(\lambda_k\bigg)^{\lambda_k}\\
		&\text{s.t.}\\
		& \qquad \sum\limits_{i=1}^{N_0}{\delta_{i0}}=\lambda_0=1, \quad (\text{Normality condition})\\
		&\qquad\sum\limits_{i=1}^{N}{\delta_{ik}}{\alpha_{kij}}=0,j=1,2,\ldots,n. \quad (\text{Orthogonality conditions})
	\end{aligned}
\end{equation}
\par In a GP problem, the dual is solved, and then using the primal-dual relationship, primal decision variables are found. Here is the relationship between primal and dual problems.\\\\
\textbf{Primal-dual Relationship:} 
Due to the strong duality theorem \cite{Duffin 1967,Duffin 1973}, here is how the primal and dual of a GP problem relate to each other. 
\begin{equation}\label{eq.7}
	\begin{aligned}
		&\sum\limits_{j=1}^n{\alpha_{0ij} } \ln(x_j)=\ln \bigg(\frac{{\delta_{i0}}f_0(\textbf{x}) }{\beta_{i0}}\bigg), i=1,2,\ldots,N_{0},\\
		&\sum\limits_{j=1}^n{\alpha_{kij} } \ln(x_j)=\ln \bigg(\frac{\delta_{ik}} { \lambda_k \beta_{ik}}\bigg),i=1,2,\ldots,N_{k},k=1,2,\ldots,K.
	\end{aligned}
\end{equation}
\begin{remark} 
	Depending on $N$, we have the following two cases.
	\begin{description}
		\item[I.] If $N \geq n+1$, then Problem \ref{eq.6} is feasible as the number of equations is less than or equal to the number of dual variables, which guarantees the existence of a solution to the dual problem \cite{Beightler 1976}.
		\item[II.] If $N < n+1$, then Problem \ref{eq.6} is inconsistent as the number of equations is greater than the number of dual variables. It guarantees that there is no analytical solution to the dual problem. However, an approximate solution can be found by the least square or linear programming method \cite{sinha 1987}.
	\end{description}
\end{remark}
\section{Numerical Example}\label{sec.5} In this section, we provide a numerical example of a GP problem with two-fold uncertainty. We consider the GP problem with two-fold uncertain coefficients as
\begin{equation}\label{eq.8}
	\begin{aligned}
		&	\min \quad \tilde{f}_0(\textbf{x})=\tilde{\beta}_{10}^2x_1 x_2+\tilde{\beta}_{20}^2x_2 x_3+
		\tilde{\beta}_{30}^2x_1 x_3\\
		&	\text{s.t.} \\
		&\qquad \tilde{f}_1(\textbf{x})= \frac{\tilde{\beta}_{11}^2}{x_1 x_2x_3}\leq 1,\\
		&\qquad	x_1,x_2,x_3>0.
	\end{aligned}
\end{equation}
where $\tilde{\beta}_{10}^2,\tilde{\beta}_{20}^2,\tilde{\beta}_{30}^2,\tilde{\beta}_{11}^2$ are independent two-fold UVs.
In this numerical example, we consider the following two cases.\\\\
\textbf{Case I:} In this case, we assume that the coefficients of the objective and constraints of Problem \ref{eq.8} are triangular two-fold UVs, which are given as $\tilde{\beta}_{10}^2=\mathcal{TRI}(10,20,25;0.5,0.6)$, $\tilde{\beta}_{20}^2=\mathcal{TRI}(30,40,50;0.4,0.6)$, $\tilde{\beta}_{30}^2=\mathcal{TRI}(15,25,30;0.4,0.5)$, $\tilde{\beta}_{11}^2=\mathcal{TRI}(6,8,9;0.5,0.7)$. For simplicity, we use expected value reduction method to reduce two-fold UVs $\tilde{\beta}_{10}^2,\tilde{\beta}_{20}^2,\tilde{\beta}_{30}^2,\tilde{\beta}_{11}^2$ into single-fold UVs $\tilde{\beta}_{10}^1,\tilde{\beta}_{20}^1,\tilde{\beta}_{30}^1,\tilde{\beta}_{11}^1$, respectively. Based on Theorem \ref{thm.8}, UDs of reduced single-fold UVs $\tilde{\beta}_{10}^1,\tilde{\beta}_{20}^1,\tilde{\beta}_{30}^1,\tilde{\beta}_{11}^1$ are as follows. 
\[\Phi_{\tilde{\beta}_{10}^1}(x)=\left\{
\begin{array}{ll}
	0, & x\leq 10;\\
	\frac{(x-10)^2}{150}+\frac{1}{20}\frac{(x-10)^2}{150}, & 10<x\leq 10+5\sqrt{2};\\
	\frac{(x-10)^2}{150}+\frac{1}{20}\big[\frac{2}{3}-\frac{(x-10)^2}{150}\big], & 10+5\sqrt{2}\leq x<20;\\
	\frac{2}{3}, & x=20;\\
	1-\frac{(25-x)^2}{75}+\frac{1}{20}\big[\frac{1}{3}-\frac{(25-x)^2}{75}\big], & 20<x\leq 25-\frac{5}{\sqrt{2}};\\
	1-\frac{(25-x)^2}{75}+\frac{1}{20}\frac{(25-x)^2}{75}, &  25-\frac{5}{\sqrt{2}}\leq x<25;\\
	1, & x\geq 25;\\
\end{array}
\right.
\]

\[\Phi_{\tilde{\beta}_{20}^1}(x)=\left\{
\begin{array}{ll}
	0, & x\leq 30;\\
	\frac{(x-30)^2}{200}+\frac{1}{10}\frac{(x-30)^2}{200}, & 30<x\leq 30+5\sqrt{2};\\
	\frac{(x-30)^2}{200}+\frac{1}{10}\big[\frac{1}{2}-\frac{(x-30)^2}{200}\big], & 30+5\sqrt{2}\leq x<40;\\
	\frac{1}{2}, & x=40;\\
	1-\frac{(50-x)^2}{200}+\frac{1}{10}\big[\frac{1}{2}-\frac{(50-x)^2}{200}\big], & 40<x\leq 50-5\sqrt{2};\\
	1-\frac{(50-x)^2}{200}+\frac{1}{10}\frac{(50-x)^2}{200}, &  50-5\sqrt{2}\leq x<50;\\
	1, & x\geq 50;\\
\end{array}
\right.
\]

\[\Phi_{\tilde{\beta}_{30}^1}(x)=\left\{
\begin{array}{ll}
	0, & x\leq 15;\\
	\frac{(x-15)^2}{150}+\frac{1}{20}\frac{(x-15)^2}{150}, & 15<x\leq 15+5\sqrt{2};\\
	\frac{(x-15)^2}{150}+\frac{1}{20}\big[\frac{2}{3}-\frac{(x-15)^2}{150}\big], & 15+5\sqrt{2}\leq x<25;\\
	\frac{2}{3}, & x=25;\\
	1-\frac{(30-x)^2}{75}+\frac{1}{20}\big[\frac{1}{3}-\frac{(30-x)^2}{75}\big], & 25<x\leq 30-\frac{5}{\sqrt{2}};\\
	1-\frac{(30-x)^2}{75}+\frac{1}{20}\frac{(30-x)^2}{75}, &  30-\frac{5}{\sqrt{2}}\leq x<30;\\
	1, & x\geq 30;\\
\end{array}
\right.
\]

\[\Phi_{\tilde{\beta}_{11}^1}(x)=\left\{
\begin{array}{ll}
	0, & x\leq 6;\\
	\frac{(x-6)^2}{6}+\frac{1}{10}\frac{(x-6)^2}{6}, & 6<x\leq 6+\sqrt{2};\\
	\frac{(x-6)^2}{6}+\frac{1}{10}\big[\frac{2}{3}-\frac{(x-6)^2}{6}\big], & 6+\sqrt{2}\leq x<8;\\
	\frac{2}{3}, & x=8;\\
	1-\frac{(9-x)^2}{3}+\frac{1}{10}\big[\frac{1}{3}-\frac{(9-x)^2}{3}\big], & 8<x\leq 9-\frac{1}{\sqrt{2}};\\
	1-\frac{(9-x)^2}{3}+\frac{1}{10}\frac{(9-x)^2}{3}, &  9-\frac{1}{\sqrt{2}}\leq x<9;\\
	1, & x\geq 9.\\
\end{array}
\right.
\]   
Therefore, the inverse of those UDs are as follows.
\[\Phi_{\tilde{\beta}_{10}^1}^{-1}(\gamma)=\left\{
\begin{array}{ll}
	10+\sqrt{\frac{1000}{7}\gamma}, & 0<\gamma\leq \frac{7}{20};\\
	10+\sqrt{\frac{3000}{19}\gamma-\frac{100}{19}}, & \frac{7}{20}\leq\gamma\leq \frac{2}{3};\\
	25-\sqrt{\frac{1525}{21}-\frac{500}{7}\gamma}, & \frac{2}{3}\leq\gamma\leq \frac{101}{120};\\
	25-\sqrt{\frac{1500}{19}-\frac{1500}{19}\gamma}, & \frac{101}{120}\leq\gamma<1;
\end{array}
\right.
\]

\[\Phi_{\tilde{\beta}_{20}^1}^{-1}(\gamma)=\left\{
\begin{array}{ll}
	30+\sqrt{\frac{2000}{11}\gamma}, & 0<\gamma\leq \frac{11}{40};\\
	30+\sqrt{\frac{2000}{9}\gamma-\frac{100}{9}}, & \frac{11}{40}\leq\gamma\leq \frac{1}{2};\\
	50-\sqrt{\frac{2100}{11}-\frac{2000}{11}\gamma}, & \frac{1}{2}\leq\gamma\leq \frac{31}{40};\\
	50-\sqrt{\frac{2000}{9}-\frac{2000}{9}\gamma}, & \frac{31}{40}\leq\gamma<1;
\end{array}
\right.
\]

\[\Phi_{\tilde{\beta}_{30}^1}^{-1}(\gamma)=\left\{
\begin{array}{ll}
	15+\sqrt{\frac{1000}{7}\gamma}, & 0<\gamma\leq \frac{7}{20};\\
	15+\sqrt{\frac{3000}{19}\gamma-\frac{100}{19}}, & \frac{7}{20}\leq\gamma\leq \frac{2}{3};\\
	30-\sqrt{\frac{1525}{21}-\frac{500}{7}\gamma}, & \frac{2}{3}\leq\gamma\leq \frac{101}{120};\\
	30-\sqrt{\frac{1500}{19}-\frac{1500}{19}\gamma}, & \frac{101}{120}\leq\gamma<1;
\end{array}
\right.
\]

\[\Phi_{\tilde{\beta}_{11}^1}^{-1}(\gamma)=\left\{
\begin{array}{ll}
	6+\sqrt{\frac{60}{11}\gamma}, & 0<\gamma\leq \frac{11}{30};\\
	6+\sqrt{\frac{20}{3}\gamma-\frac{4}{9}}, & \frac{11}{30}\leq\gamma\leq \frac{2}{3};\\
	9-\sqrt{\frac{31}{11}-\frac{30}{11}\gamma}, & \frac{2}{3}\leq\gamma\leq \frac{51}{60};\\
	9-\sqrt{\frac{10}{3}-\frac{10}{3}\gamma}, & \frac{51}{60}\leq\gamma<1.
\end{array}
\right.
\]  
Now, we have $\beta_{10}=\int\limits_{0}^{1}\Phi_{\tilde{\beta}_{10}^1}^{-1}(\gamma)d\gamma,\beta_{20}=\int\limits_{0}^{1}\Phi_{\tilde{\beta}_{20}^1}^{-1}(\gamma)d\gamma,\text{ and }\beta_{30}=\int\limits_{0}^{1}\Phi_{\tilde{\beta}_{30}^1}^{-1}(\gamma)d\gamma.$ After calculation, we have $\beta_{10}\approx 18.252,\beta_{20}\approx 39.804,\text{ and }\beta_{30}\approx 23.252.$ Therefore, the equivalent deterministic form of Problem \ref{eq.8} is
\begin{equation}\label{eq.9}
	\begin{aligned}
		&	\min \quad 18.252x_1 x_2+39.804x_2 x_3+
		23.252x_1 x_3\\
		&	\text{s.t.} \\
		&\qquad  \frac{\Phi_{\tilde{\beta}_{11}^1}^{-1}(\gamma)}{x_1 x_2x_3}\leq 1,\\
		&\qquad	x_1,x_2,x_3>0.
	\end{aligned}
\end{equation}
where $\Phi_{\tilde{\beta}_{11}^1}^{-1}(\gamma)$ is given above. Here is the dual of Problem \ref{eq.9} as follows.
\begin{equation}\label{eq.10}
	\begin{aligned}
		&\max \quad V(\delta)=\bigg(\frac{18.252}{{\delta_{1}}}\bigg)^{{\delta_{1}}}\bigg(\frac{39.804}{{\delta_{2}}}\bigg)^{{\delta_{2}}} \bigg(\frac{23.252}{{\delta_{3}}}\bigg)^{{\delta_{3}}}\bigg(\frac{\Phi_{\tilde{\beta}_{11}^1}^{-1}(\gamma)}{{\delta_{4}}}\bigg)^{{\delta_{4}}}\big(\delta_4\big)^{\delta_4}\\
		&\text{s.t.}\\
		& \qquad \delta_1+\delta_{2}+\delta_{3}=1, \quad (\text{Normality condition})\\
		&\qquad \delta_{1}+\delta_{3}-\delta_{4}=0,\\
		&\qquad \delta_{1}+\delta_{2}-\delta_{4}=0, \quad (\text{Orthogonality conditions})\\
		&\qquad \delta_{2}+\delta_{3}-\delta_{4}=0.
	\end{aligned}
\end{equation}
Solving Problem \ref{eq.10}, we get dual solution. Consequently, by the strong duality theorem, we obtain the primal decision variables and expected objective value. We find the solutions for confidence levels $\gamma \in (0,1)$, which are given in Table \ref{table.1}. \\
\begin{table}[htb]
	\caption{ Optimal solutions under triangular two-fold uncertainty}
	\label{table.1}
	\centering
	\begin{tabular}{c c c c c c c c c }
		\hline\hline
		$\gamma$ & $x_1^*$ & $x_2^*$ & $x_3^*$ & $\delta_1^*$ & $\delta_2^*$ & $\delta_3^*$ & $\delta_4^*$ & $\mathbb{E}[\tilde{f}_0(\textbf{x}^*)]$\\ [0.5ex]
		\hline\hline
		0.1 & 2.930 & 1.712 & 1.344 & 0.333 & 0.333 & 0.333 & 0.667 & 274.632 \\
	0.2 & 2.974 & 1.737 & 1.364 & 0.333 & 0.333 & 0.333 & 0.667 & 282.857\\
		0.3 & 3.006 & 1.756 & 1.379 & 0.333 & 0.333 & 0.333 & 0.667 & 289.113 \\
	0.4 & 3.035 & 1.773 & 1.392 & 0.333 & 0.333 & 0.333 & 0.667 & 294.700\\
	0.5 & 3.063 & 1.789 & 1.405 & 0.333 & 0.333 & 0.333 & 0.667 & 300.156\\
		0.6 & 3.088 & 1.804 & 1.416 & 0.333 & 0.333 & 0.333 & 0.667 & 304.971 \\
	0.7 & 3.109 & 1.816 & 1.425 & 0.333 & 0.333 & 0.333 & 0.667 & 309.107\\
	0.8 & 3.128 & 1.828 & 1.435 & 0.333 & 0.333 & 0.333 & 0.667 & 313.064 \\
		0.9 & 3.156 & 1.844 & 1.447 & 0.333 & 0.333 & 0.333 & 0.667 & 318.663 \\ [1ex]
		\hline

	\end{tabular}
	
\end{table}\\
Figure \ref{fig:11} shows the expected objective value with respect to confidence level under triangular two-fold uncertainty based on Table \ref{table.1}. It is observed that the expected objective value is increasing with respect to the confidence level. \\
\begin{figure}[htb] 
	\centering 
	\includegraphics[scale=0.225]{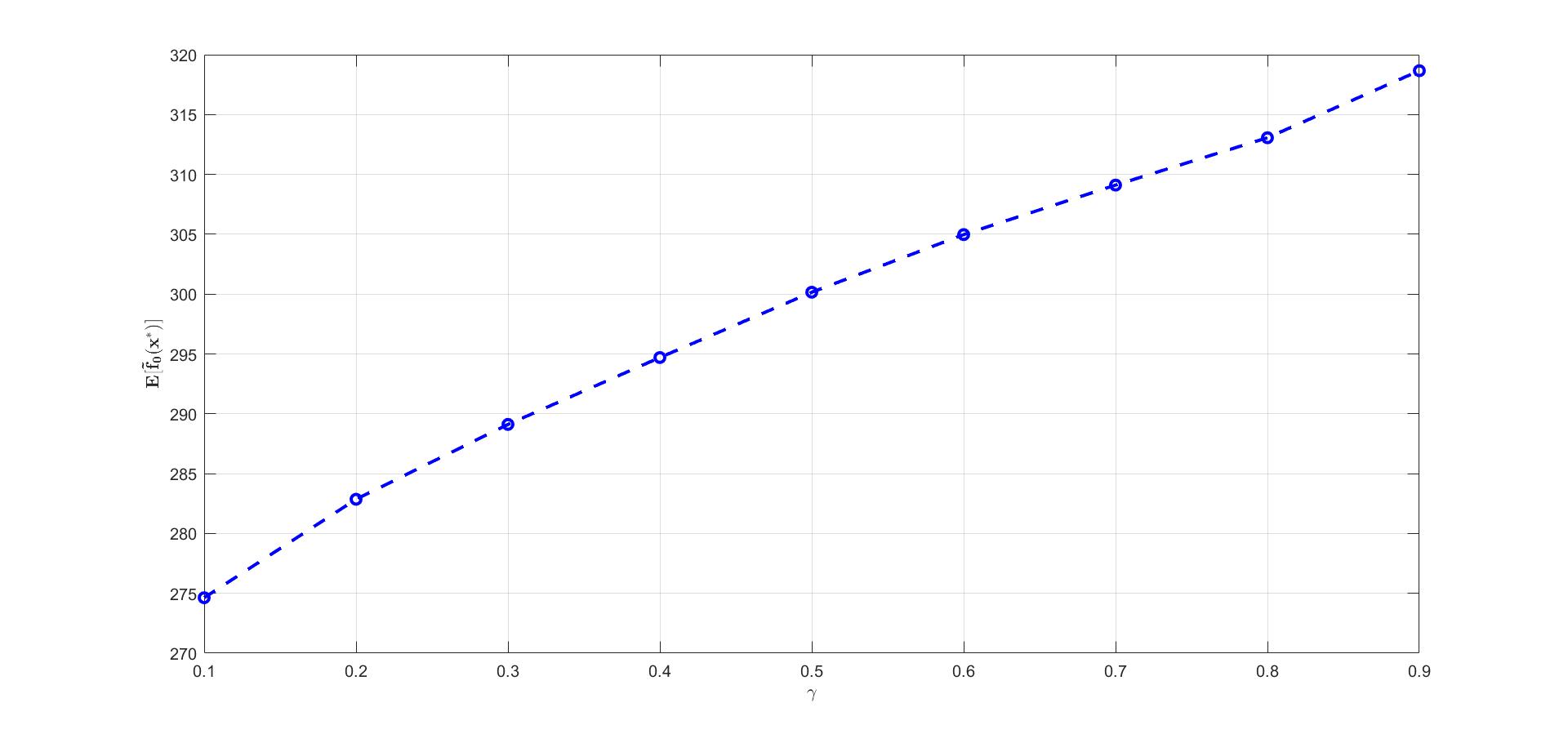} 
	\caption{Expected objective value with respect to confidence level under triangular two-fold uncertainty.}
	\label{fig:11}
\end{figure}
\\
\textbf{Case II:} In this case, we assume that the coefficients of the objective and constraints of Problem \ref{eq.8} are trapezoidal two-fold UVs, which are given as $\tilde{\beta}_{10}^2=\mathcal{TRA}(10,15,20,25;0.5,0.6)$, $\tilde{\beta}_{20}^2=\mathcal{TRA}(30,40,50,60;0.4,0.6)$, $\tilde{\beta}_{30}^2=\mathcal{TRA}(15,20,25,30;0.4,0.5)$, $\tilde{\beta}_{11}^2=\mathcal{TRA}(6,7,8,9;0.5,0.7)$. Similar to earlier case, we use expected value reduction method to reduce two-fold UVs $\tilde{\beta}_{10}^2,\tilde{\beta}_{20}^2,\tilde{\beta}_{30}^2,\tilde{\beta}_{11}^2$ into single-fold UVs $\tilde{\beta}_{10}^1,\tilde{\beta}_{20}^1,\tilde{\beta}_{30}^1,\tilde{\beta}_{11}^1$, respectively. Based on Theorem \ref{thm.11}, UDs of reduced single-fold UVs $\tilde{\beta}_{10}^1,\tilde{\beta}_{20}^1,\tilde{\beta}_{30}^1,\tilde{\beta}_{11}^1$ are as follows. 
\[\Phi_{\tilde{\beta}_{10}^1}(x)=\left\{
\begin{array}{ll}
	0, & x\leq 10;\\
	\frac{(x-10)^2}{100}+\frac{1}{20}\frac{(x-10)^2}{100}, & 10<x\leq 10+\frac{5}{\sqrt{2}};\\
	\frac{(x-10)^2}{100}+\frac{1}{20}\big[\frac{1}{4}-\frac{(x-10)^2}{100}\big], & 10+\frac{5}{\sqrt{2}}\leq x<15;\\
	\frac{1}{4}, & x=15;\\
	\frac{2x-25}{20}+\frac{1}{20}\big[\frac{2x-25}{20}-\frac{1}{4}\big], & 15< x\leq \frac{35}{2};\\
	\frac{2x-25}{20}+\frac{1}{20}\big[\frac{3}{4}-\frac{2x-25}{20}\big], & \frac{35}{2}\leq x<20;\\
	\frac{3}{4}, & x=20;\\
	1-\frac{(25-x)^2}{100}+\frac{1}{20}\big[\frac{1}{4}-\frac{(25-x)^2}{100}\big], & 20<x\leq 25-\frac{5}{\sqrt{2}};\\
	1-\frac{(25-x)^2}{100}+\frac{1}{20}\frac{(25-x)^2}{100}, &  25-\frac{5}{\sqrt{2}}\leq x<25;\\
	1, & x\geq 25;\\
\end{array}
\right.
\]

\[\Phi_{\tilde{\beta}_{20}^1}(x)=\left\{
\begin{array}{ll}
	0, & x\leq 30;\\
	\frac{(x-30)^2}{400}+\frac{1}{10}\frac{(x-30)^2}{400}, & 30<x\leq 30+5\sqrt{2};\\
	\frac{(x-30)^2}{400}+\frac{1}{10}\big[\frac{1}{4}-\frac{(x-30)^2}{400}\big], & 30+5\sqrt{2}\leq x<40;\\
	\frac{1}{4}, & x=40;\\
	\frac{2x-70}{40}+\frac{1}{10}\big[\frac{2x-70}{40}-\frac{1}{4}\big], & 40< x\leq 45;\\
	\frac{2x-70}{40}+\frac{1}{10}\big[\frac{3}{4}-\frac{2x-70}{40}\big], & 45\leq x<50;\\
	\frac{3}{4}, & x=50;\\
	1-\frac{(60-x)^2}{400}+\frac{1}{10}\big[\frac{1}{4}-\frac{(60-x)^2}{400}\big], & 50<x\leq 60-5\sqrt{2};\\
	1-\frac{(60-x)^2}{400}+\frac{1}{10}\frac{(60-x)^2}{400}, &  60-5\sqrt{2}\leq x<60;\\
	1, & x\geq 60;\\
\end{array}
\right.
\]

\[\Phi_{\tilde{\beta}_{30}^1}(x)=\left\{
\begin{array}{ll}
	0, & x\leq 15;\\
	\frac{(x-15)^2}{100}+\frac{1}{20}\frac{(x-15)^2}{100}, & 15<x\leq 15+\frac{5}{\sqrt{2}};\\
	\frac{(x-15)^2}{100}+\frac{1}{20}\big[\frac{1}{4}-\frac{(x-15)^2}{100}\big], & 15+\frac{5}{\sqrt{2}}\leq x<20;\\
	\frac{1}{4}, & x=20;\\
	\frac{2x-35}{20}+\frac{1}{20}\big[\frac{2x-35}{20}-\frac{1}{4}\big], & 20< x\leq \frac{45}{2};\\
	\frac{2x-35}{20}+\frac{1}{20}\big[\frac{3}{4}-\frac{2x-35}{20}\big], & \frac{45}{2}\leq x<25;\\
	\frac{3}{4}, & x=25;\\
	1-\frac{(30-x)^2}{100}+\frac{1}{20}\big[\frac{1}{4}-\frac{(30-x)^2}{100}\big], & 25<x\leq 30-\frac{5}{\sqrt{2}};\\
	1-\frac{(30-x)^2}{100}+\frac{1}{20}\frac{(30-x)^2}{100}, &  30-\frac{5}{\sqrt{2}}\leq x<30;\\
	1, & x\geq 30;\\
\end{array}
\right.
\]

\[\Phi_{\tilde{\beta}_{11}^1}(x)=\left\{
\begin{array}{ll}
	0, & x\leq 6;\\
	\frac{(x-6)^2}{4}+\frac{1}{10}\frac{(x-6)^2}{4}, & 6<x\leq 6+\frac{1}{\sqrt{2}};\\
	\frac{(x-6)^2}{4}+\frac{1}{10}\big[\frac{1}{4}-\frac{(x-6)^2}{4}\big], & 6+\frac{1}{\sqrt{2}}\leq x<7;\\
	\frac{1}{4}, & x=7;\\
	\frac{2x-13}{4}+\frac{1}{10}\big[\frac{2x-13}{4}-\frac{1}{4}\big], & 7< x\leq \frac{15}{2};\\
	\frac{2x-13}{4}+\frac{1}{10}\big[\frac{3}{4}-\frac{2x-13}{4}\big], & \frac{15}{2}\leq x<8;\\
	\frac{3}{4}, & x=8;\\
	1-\frac{(9-x)^2}{4}+\frac{1}{10}\big[\frac{1}{4}-\frac{(9-x)^2}{4}\big], & 8<x\leq 9-\frac{1}{\sqrt{2}};\\
	1-\frac{(9-x)^2}{4}+\frac{1}{10}\frac{(9-x)^2}{4}, &  9-\frac{1}{\sqrt{2}}\leq x<9;\\
	1, & x\geq 9.\\
\end{array}
\right.
\]   
Therefore, the inverse of those UDs are as follows.
\[\Phi_{\tilde{\beta}_{10}^1}^{-1}(\gamma)=\left\{
\begin{array}{ll}
	10+\sqrt{\frac{2000}{21}\gamma}, & 0<\gamma\leq \frac{21}{160};\\
	10+\sqrt{\frac{2000}{19}\gamma-\frac{25}{19}}, & \frac{21}{160}\leq\gamma\leq \frac{1}{4};\\
	\frac{200}{21}\gamma+\frac{265}{21}, &\frac{1}{4}\leq \gamma\leq \frac{41}{80};\\
	\frac{200}{19}\gamma+\frac{230}{19}, &\frac{41}{80}\leq \gamma\leq \frac{3}{4};\\
	25-\sqrt{\frac{675}{7}-\frac{200}{21}\gamma}, & \frac{3}{4}\leq\gamma\leq \frac{141}{160};\\
	25-\sqrt{\frac{2000}{19}-\frac{2000}{19}\gamma}, & \frac{141}{160}\leq\gamma<1;
\end{array}
\right.
\]

\[\Phi_{\tilde{\beta}_{20}^1}^{-1}(\gamma)=\left\{
\begin{array}{ll}
30+\sqrt{\frac{4000}{11}\gamma}, & 0<\gamma\leq \frac{11}{80};\\
30+\sqrt{\frac{4000}{9}\gamma-\frac{100}{9}}, & \frac{11}{80}\leq\gamma\leq \frac{1}{4};\\
\frac{200}{11}\gamma+\frac{390}{11}, &\frac{1}{4}\leq \gamma\leq \frac{21}{40};\\
\frac{200}{9}\gamma+\frac{100}{3}, &\frac{21}{40}\leq \gamma\leq \frac{3}{4};\\
60-\sqrt{\frac{4100}{11}-\frac{4000}{11}\gamma}, & \frac{3}{4}\leq\gamma\leq \frac{71}{80};\\
60-\sqrt{\frac{4000}{9}-\frac{4000}{9}\gamma}, & \frac{71}{80}\leq\gamma<1;
\end{array}
\right.
\]

\[\Phi_{\tilde{\beta}_{30}^1}^{-1}(\gamma)=\left\{
\begin{array}{ll}
15+\sqrt{\frac{2000}{21}\gamma}, & 0<\gamma\leq \frac{21}{160};\\
15+\sqrt{\frac{2000}{19}\gamma-\frac{25}{19}}, & \frac{21}{160}\leq\gamma\leq \frac{1}{4};\\
\frac{200}{21}\gamma+\frac{370}{21}, &\frac{1}{4}\leq \gamma\leq \frac{41}{80};\\
\frac{200}{19}\gamma+\frac{325}{19}, &\frac{41}{80}\leq \gamma\leq \frac{3}{4};\\
30-\sqrt{\frac{675}{7}-\frac{200}{21}\gamma}, & \frac{3}{4}\leq\gamma\leq \frac{141}{160};\\
30-\sqrt{\frac{2000}{19}-\frac{2000}{19}\gamma}, & \frac{141}{160}\leq\gamma<1;
\end{array}
\right.
\]

\[\Phi_{\tilde{\beta}_{11}^1}^{-1}(\gamma)=\left\{
\begin{array}{ll}
6+\sqrt{\frac{40}{11}\gamma}, & 0<\gamma\leq \frac{11}{80};\\
6+\sqrt{\frac{40}{9}\gamma-\frac{1}{9}}, & \frac{11}{80}\leq\gamma\leq \frac{1}{4};\\
\frac{20}{11}\gamma+\frac{72}{11}, &\frac{1}{4}\leq \gamma\leq \frac{21}{40};\\
\frac{20}{9}\gamma+\frac{19}{3}, &\frac{21}{40}\leq \gamma\leq \frac{3}{4};\\
9-\sqrt{\frac{41}{11}-\frac{40}{11}\gamma}, & \frac{3}{4}\leq\gamma\leq \frac{71}{80};\\
9-\sqrt{\frac{40}{9}-\frac{40}{9}\gamma}, & \frac{71}{80}\leq\gamma<1.
\end{array}
\right.
\] 
Similar to earlier case, we have $\beta_{10}=\int\limits_{0}^{1}\Phi_{\tilde{\beta}_{10}^1}^{-1}(\gamma)d\gamma,\beta_{20}=\int\limits_{0}^{1}\Phi_{\tilde{\beta}_{20}^1}^{-1}(\gamma)d\gamma,\text{ and }\beta_{30}=\int\limits_{0}^{1}\Phi_{\tilde{\beta}_{30}^1}^{-1}(\gamma)d\gamma.$ After calculation, we have $\beta_{10}\approx 17.745,\beta_{20}\approx 44.779,\text{ and }\beta_{30}\approx 21.775.$ Therefore, the equivalent deterministic form of Problem \ref{eq.8} is
\begin{equation}\label{eq.11}
	\begin{aligned}
		&	\min \quad 17.745x_1 x_2+44.779x_2 x_3+
		21.775x_1 x_3\\
		&	\text{s.t.} \\
		&\qquad  \frac{\Phi_{\tilde{\beta}_{11}^1}^{-1}(\gamma)}{x_1 x_2x_3}\leq 1,\\
		&\qquad	x_1,x_2,x_3>0.
	\end{aligned}
\end{equation}
where $\Phi_{\tilde{\beta}_{11}^1}^{-1}(\gamma)$ is given above. Here is the dual of Problem \ref{eq.11} as follows.
\begin{equation}\label{eq.12}
	\begin{aligned}
		&\max \quad V(\delta)=\bigg(\frac{17.745}{{\delta_{1}}}\bigg)^{{\delta_{1}}}\bigg(\frac{44.779}{{\delta_{2}}}\bigg)^{{\delta_{2}}} \bigg(\frac{21.775}{{\delta_{3}}}\bigg)^{{\delta_{3}}}\bigg(\frac{\Phi_{\tilde{\beta}_{11}^1}^{-1}(\gamma)}{{\delta_{4}}}\bigg)^{{\delta_{4}}}\big(\delta_4\big)^{\delta_4}\\
		&\text{s.t. the same normality and orthogonality conditions given in Problem \ref{eq.10}}.
	\end{aligned}
\end{equation}
Similarly, solving Problem \ref{eq.12}, we get dual solution. Consequently, by the strong duality theorem, we obtain the primal decision variables and expected objective value. We find the solutions for confidence levels $\gamma \in (0,1)$, which are given in Table \ref{table.2}. \\ 
\begin{table}[htb]
\caption{ Optimal solutions under trapezoidal two-fold uncertainty}
\label{table.2}
\centering
\begin{tabular}{c c c c c c c c c }
	\hline\hline
	$\gamma$ & $x_1^*$ & $x_2^*$ & $x_3^*$ & $\delta_1^*$ & $\delta_2^*$ & $\delta_3^*$ & $\delta_4^*$ & $\mathbb{E}[\tilde{f}_0(\textbf{x}^*)]$\\ [0.5ex]
	\hline\hline
	0.1 & 3.248 & 1.579 & 1.287 & 0.333 & 0.333 & 0.333 & 0.667 & 273.098 \\
0.2 & 3.293 & 1.601 & 1.305 & 0.333 & 0.333 & 0.333 & 0.667 & 280.738\\
0.3 & 3.326 & 1.617 & 1.318 & 0.333 & 0.333 & 0.333 & 0.667 & 286.393 \\
0.4 & 3.354 & 1.631 & 1.329 & 0.333 & 0.333 & 0.333 & 0.667 & 291.273\\
	0.5 & 3.382 & 1.645 & 1.340 & 0.333 & 0.333 & 0.333 & 0.667 & 296.112\\
	0.6 & 3.414 & 1.660 & 1.353 & 0.333 & 0.333 & 0.333 & 0.667 & 301.699 \\
	0.7 & 3.447 & 1.676 & 1.366 & 0.333 & 0.333 & 0.333 & 0.667 & 307.496\\
	0.8 & 3.476 & 1.690 & 1.378 & 0.333 & 0.333 & 0.333 & 0.667 & 312.825 \\
	0.9 & 3.510 & 1.707 & 1.391 & 0.333 & 0.333 & 0.333 & 0.667 & 318.927 \\ [1ex]
	\hline

	\end{tabular}
	
\end{table}
\\
In Figure \ref{fig:12}, we show the expected objective value with respect to confidence level under trapezoidal two-fold uncertainty based on Table \ref{table.2}. Here, too, the expected objective value increases with confidence level.\\
\begin{figure}[htb] 
	\centering 
	\includegraphics[scale=0.225]{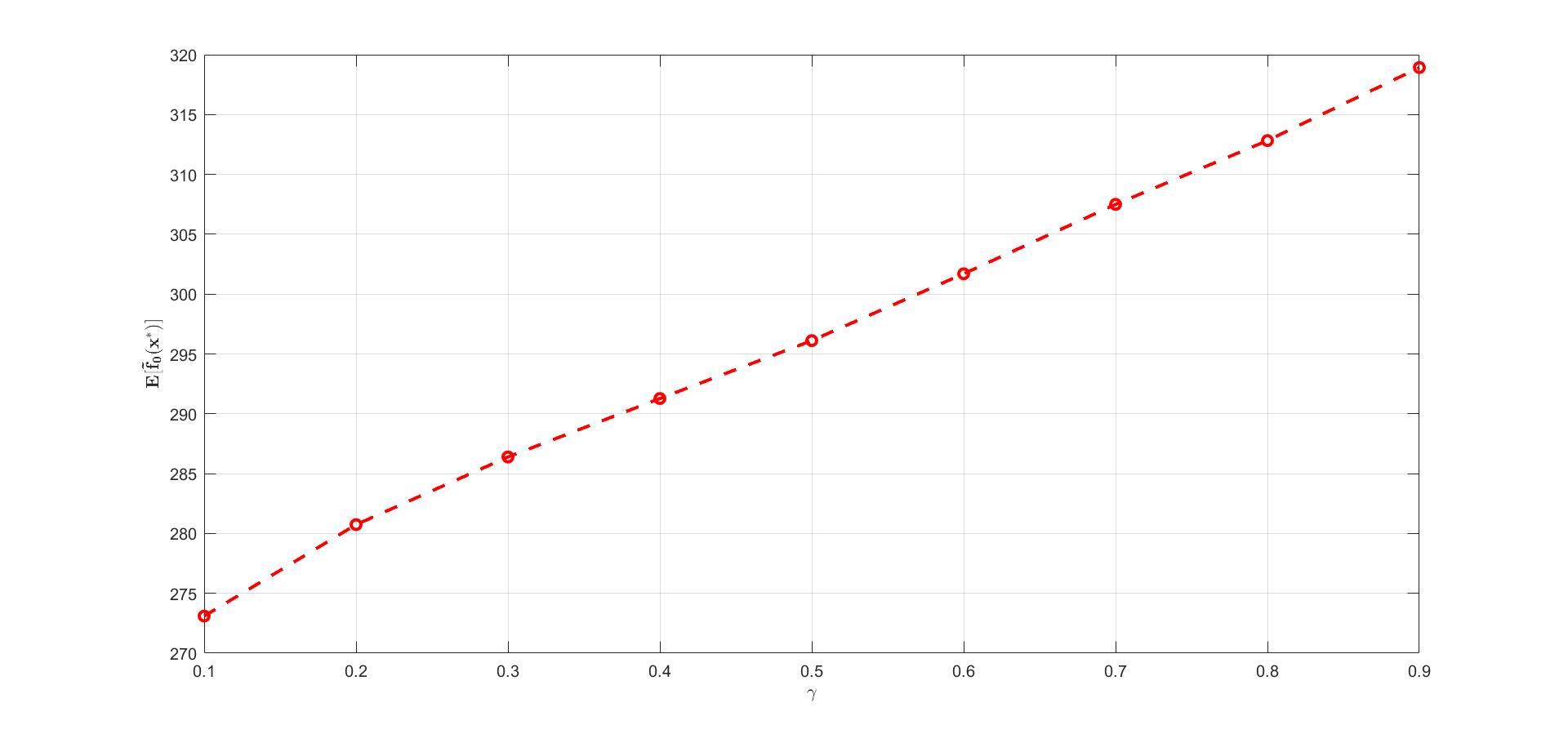} 
	\caption{Expected objective value with respect to confidence level under trapezoidal two-fold uncertainty.}
	\label{fig:12}
\end{figure}
\\
\section{Conclusion}\label{sec.6}
GP is a powerful technique for solving nonlinear optimization problems. Conventional GP problems assume that the parameters are precise and exact. However, in the real-world GP problem, the parameters may be imprecise and ambiguous. In this paper, we study the GP problem with the coefficients as UVs in an uncertain environment. We propose the definitions of triangular and trapezoidal two-fold UVs. We develop three reduction methods: optimistic, pessimistic, and expected value criteria, to reduce triangular and trapezoidal two-fold UVs into single-fold UVs. We apply those reduction methods to a GP problem with triangular and trapezoidal two-fold uncertainty. We show how the GP problem with two-fold uncertainty is transformed into its equivalent deterministic form. Finally, we provide the solution to a numerical example of a GP problem under triangular and trapezoidal two-fold uncertainty to show the efficiency and effectiveness of the procedure. In the numerical example, we use only the expected value reduction method to reduce two-fold uncertainty into single-fold uncertainty. One may also use optimistic and pessimistic reduction methods.
\par For application purposes, one may consider solid transportation problems and the inventory model under triangular and trapezoidal two-fold uncertainty for further research. \\\\
{\small \textbf{Acknowledgement }The first author is thankful to CSIR for financial support of this work through file No: 09\textbackslash 1059(0027)\textbackslash 2019-EMR-I.}
\\
\textbf{Data availability } Data sharing not applicable to this article as no datasets were generated. A random data set is taken for the example and the inventory model given in this paper.\\
\textbf{Conflict of interests} The authors have no relevant financial or non financial interests to disclose. The authors
declare that they have no conflict of interests.

\end{document}